\newtheorem{maintheorem}{Theorem}
\newtheorem{theorem}{Theorem}[section]
\newtheorem{lemma}[theorem]{Lemma}
\newtheorem{corollary}[theorem]{Corollary}
\newtheorem{proposition}[theorem]{Proposition}
\theoremstyle{remark}
\newtheorem{remark}[theorem]{Remark}
\newtheorem{definition}[theorem]{Definition}
\newtheorem{example}[theorem]{Example}
\def\AA{{\mathbb A}}
\def\CC{{\mathbb C}}
\def\FF{{\mathbb F}}
\def\NN{{\mathbb N}}
\def\PP{{\mathbb P}}
\def\QQ{{\mathbb Q}}
\def\cI{{\mathcal I}}
\def\cO{{\mathcal O}}
\def\cR{{\mathcal R}}
\DeclareMathOperator{\aff}{aff}
\DeclareMathOperator{\mult}{mult}
\DeclareMathOperator{\Hom}{Hom}
\DeclareMathOperator{\Gr}{Gr}
\newcommand{\Naff}{N_\mathrm{aff}}
\title{A geometric determinant method and geometric dimension growth}
\author{Tijs Buggenhout}
\address{Department of Mathematics, KU Leuven, Belgium}
\email{tijs.buggenhout@kuleuven.be}
\author{Yotam I.\,Hendel}
\address{Department of Mathematics, Ben-Gurion University of the Negev, Be’er Sheva, Israel}
\email{yhendel@bgu.ac.il}
\author{Floris Vermeulen}
\address{Mathematics M\"unster, University of M\"unster, Germany}
\email{florisvermeulen.math@gmail.com}
\date{}
\begin{document}


\begin{abstract}
We study a geometric version of the dimension growth conjecture. While it is closely related in spirit to themes arising in geometric Manin's conjecture, it applies in greater generality and provides more uniform  bounds. 

For an irreducible projective variety $X$ defined over $\CC(t)$, the set $X(b)$ of $\CC(t)$-rational points on $X$  of degree less than $b$ has a natural structure of an algebraic variety over $\CC$. 
We study the dimension and irreducibility of $X(b)$ when $X$ has degree $d \ge 2$, and obtain  a geometric analogue of the classical dimension growth conjecture, namely that 
$\dim X(b) \le b\dim X  $ for every $b \ge 1$. 
 In particular, when $X$ is defined over $\CC$, this provides uniform bounds on the dimension of the space of degree $b$ rational curves  on $X$.

We also develop a 
geometric version of Heath-Brown's $p$-adic determinant method for varieties defined over $\CC(t)$.  
This allows us to show that as soon as $d \ge 6$, the number of irreducible components of $X(b)$ of dimension $b\dim X$  
is  bounded by a polynomial in $d$ which is independent of $b$. 
As a further application, we obtain an analogue of the Bombieri--Pila theorem for affine curves, as well as a corresponding result for projective curves.
\end{abstract}
\maketitle

\section{Introduction}

Since the work of Bombieri--Pila\,\cite{Bombieri-Pila}, the determinant method has been a crucial tool in obtaining uniform upper bounds on integral and rational points on algebraic varieties. Through a $p$-adic adaptation of this determinant method, Heath-Brown~\cite{Heath-Brown-Ann} was able to prove the \emph{dimension growth conjecture} for surfaces. This conjecture asserts more generally that an irreducible projective variety $X\subset \PP^n$ of degree $d\geq 2$ over $\QQ$ satisfies
\[
N(X,B)\ll_{n,d,\varepsilon} B^{\dim X + \varepsilon},
\]
where $N(X,B)$ counts the number of rational points of $X$ of height at most $B$. Note that the implicit constant in the inequality above depends only on $n, d$ and $\varepsilon$, and not on the variety $X$ itself. Using an affine variant of dimension growth, Browning--Heath-Brown--Salberger, and later Salberger were able to prove the dimension growth conjecture whenever $d\geq 4$ \cite{Brow-Heath-Salb,Salberger-dgc}, while the case $d=2$ is known via different methods \cite{Heath-Brown-Ann}. The case $d=3$ remains open, although a non-uniform version  has been obtained by Salberger \cite{Salberger-dgc}.

In recent years, there has been growing interest in making the implicit constant in dimension-growth type bounds explicit, and in particular in proving that it depends polynomially on $d$. 
In addition, it was shown that the $\varepsilon$ appearing in the exponent of the above bounds can be removed. 
Building upon work of many authors \cite{Walsh,CCDN-dgc, CDHNV, BCN-d}, it is now known thanks to Binyamini--Cluckers--Kato \cite{BinCluKat}, that if $X\subset \PP^n$ is irreducible of degree $d\geq 4$ and defined over $\QQ$, then
\[
N(X,B)\ll_n d^2 B^{\dim X}\log (B)^{O(1)}.
\]
This bound is essentially optimal, in view of the recent work~\cite{CluGla}. Let us also mention that nowadays dimension growth type bounds exist for all global fields, see \cite{Sedunova, CFL, Vermeulen:p, Pared-Sas, CDHNV, Vermeulen:affinedg}.

For many counting problems over global fields, it is possible to formulate geometric or motivic analogues. If $X$ is a projective variety over $\CC$, then for a positive integer $b$ the set 
\[
\Hom^b(\PP^1, X) = \{f: \PP^1\to X\mid \deg f = b\}
\]
can be considered a suitable analogue for the set of rational points of (logarithmic) height $b$. Note that this set has a structure of an algebraic variety over $\CC$, and hence the geometry of $\Hom^b(\PP^1, X)$ is naturally of interest. For example, one can try to understand the dimension, irreducibility or singularities of $\Hom^b(\PP^1, X)$. 
These types of problems received significant attention in recent years, notably in the case where $X$ is a smooth variety (see \cite{Kollar1996_RationalCurves,BrowningVishe, BiluBrowning, Faisant, Glas,faisant2025motivic}) 
or smooth with additional assumptions as in the context of geometric Manin's  conjecture (see e.g.~\cite{LeTa19_GeomManin,LeTa21_PrimeFano3folds} 
as well as 
\cite{Tanimoto_GeometricManin_Miyako2021} for a survey on geometric Manin's conjecture). 


Let $X\subset \PP^n$ be a quasi-projective variety over $\CC(t)$. Then similarly the set
\begin{equation}
\tag{$\star$} 
X(b) := \{(a_0 : \ldots : a_n)\in X(\CC(t))\mid a_i\in \CC[t] \text{ and } \deg a_i<b, ~\forall 0 \le i \le n\},
\end{equation}
which is the analogue of the set of rational points of  height smaller than $b$ on $X$, has a natural structure of a quasi-projective variety over $\CC$. It is an object closely related to, but different from $\Hom^b(\PP^1, X)$; if $X$ is defined over $\CC$ then $X(b)$ is the union of $\Hom^i(\PP^1, X)$ for $i<b$. 

The aim of this article is to investigate 
the geometry of $X(b)$ under minimal assumptions on $X$, and more precisely to develop and study a geometric analogue of the dimension growth conjecture in this setting. 
In contrast to the aforementioned works,  where $X$ is assumed to be smooth (and in many cases satisfies additional assumptions), here $X$ may be badly singular. In addition, instead of working with varieties over $\CC$, we work with varieties over $\CC(t)$.

While upper bounds on the dimension and number of irreducible components of $X(b)$ at this level of generality are less precise for a fixed $X$, we expect their uniformity to be useful in applications, as in the classical dimension growth setting (see e.g.~\cite{Walkowiak,Pared-Sas:Hilbert,GIP24}). 
 
\subsection{Main results} 
Our first main result is a geometric analogue of dimension growth bounds for $X(b)$. We note again our height normalization in the definition of $X(b)$ (e.g.~$X(0)=\varnothing$, see $(\star)$  above).

\begin{maintheorem}\label{thm:main.siegel}
Let $X\subset \PP^n$ be an irreducible projective variety of degree $d\geq 2$ and dimension $m$ defined over $\CC(t)$. Then for every $b \ge 1$,
\[
\dim X(b)\leq mb.
\]
\end{maintheorem}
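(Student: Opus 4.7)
The plan is to reduce to the hypersurface case via generic projection, identify $X(b)$ with a space of sections of an algebraic family over $\PP^1$, and bound its dimension by tangent-bundle theory.

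First, after replacing $\PP^n$ by the smallest linear subspace containing $X$, a generic $\CC$-linear projection $\pi\colon \PP^n \dashrightarrow \PP^{m+1}$ restricts to a birational map $\pi|_X\colon X \to X' := \pi(X)$, where $X'$ is an irreducible hypersurface of degree $d$ in $\PP^{m+1}$. Because $\pi$ is $\CC$-linear, coordinates of $\pi(a)$ are $\CC$-linear combinations of those of $a$, so heights of $\CC(t)$-points do not grow; combined with the generic injectivity of $\pi|_X$ this yields a generically injective $\CC$-morphism $X(b) \to X'(b)$ and hence $\dim X(b) \le \dim X'(b)$. It therefore suffices to treat the case $X = V(F) \subset \PP^{m+1}$, where $F \in \CC[t][x_0,\ldots,x_{m+1}]$ has degree $d$ in the $x_i$ and degree $e \ge 0$ in $t$.

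Spreading out, $F$ defines an irreducible hypersurface $\mathfrak{X} \subset \PP^{m+1}\times\PP^1$ of bidegree $(d,e)$ whose generic fibre over $\PP^1$ is $X$. A point of $X(b)$ with coprime representatives $a_i \in \CC[t]_{<b}$ corresponds to a section $s\colon \PP^1 \to \mathfrak{X}$ of $\mathfrak{X}\to\PP^1$ whose projection to $\PP^{m+1}$ has degree $k = \max_i \deg a_i \le b-1$. Thus $X(b)$ is stratified as $\bigsqcup_{k<b} \mathrm{Sec}_k(\mathfrak{X}/\PP^1)$ and it suffices to show $\dim \mathrm{Sec}_k \le m(k+1)$ for every $k<b$. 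For a section $s$ whose image avoids the singular locus of $\mathfrak{X}$, the Zariski tangent space at $s$ equals $H^0(\PP^1, s^*T_{\mathfrak{X}/\PP^1})$, which bounds the local dimension. Using adjunction with the hyperplane classes $H_1, H_2$ on $\PP^{m+1}\times\PP^1$,
\[
K_{\mathfrak{X}/\PP^1} = \bigl((d-m-2)H_1 + eH_2\bigr)\big|_{\mathfrak{X}},
\]
and since $s(\PP^1)$ has bidegree $(k,1)$, one obtains $\deg s^*T_{\mathfrak{X}/\PP^1} = (m+2-d)k - e$. When the Grothendieck splitting of this rank-$m$ bundle on $\PP^1$ has all summands of degree $\ge -1$, this yields $\dim_s \mathrm{Sec}_k \le (m+2-d)k + m - e$, and the bound is $\le m(k+1)$ precisely when $(d-2)k + e \ge 0$, which is automatic for $d \ge 2$, $e \ge 0$, $k \ge 0$.

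The main obstacle I anticipate is extending this local tangent-space bound to all of $\mathrm{Sec}_k$: sections whose image meets the singular locus of $\mathfrak{X}$, and sections with ``unbalanced'' normal-bundle splitting (some summand of degree $\le -2$, so $h^0 > \chi$) violate the hypotheses of the local computation. For the singular locus, $\mathrm{Sing}(\mathfrak{X}) \cap X$ is a proper subvariety of $X$ of dimension $<m$, so by induction on $m$ (trivial base case $m=0$) the sections landing there contribute at most $(m-1)b$ to $\dim X(b)$. For the unbalanced-splitting locus, I would stratify $\mathrm{Sec}_k$ by splitting type and argue, via a deformation-to-the-generic-section together with a Hilbert scheme comparison, that on each stratum the dimension is still bounded by $m(k+1)$. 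Assembling these pieces yields $\dim X(b) \le mb$.
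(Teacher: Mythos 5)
Your proposal takes a genuinely different route from the paper's. The paper reduces to hypersurfaces by $\CC$-linear projection (as you do), but then proceeds by induction on dimension, slicing by a pencil of hyperplanes defined over $\CC$ and reducing to the plane-curve case $m=1$; that base case (Proposition \ref{prop:curve.siegel}) is handled arithmetically, via Mason's function-field Siegel theorem for genus $\le 1$, Manin--Grauert (Mordell over function fields) for genus $\ge 2$, and an analysis of Pell equations over $\CC[t]$ for genus $0$ with two points at infinity. Your approach replaces this arithmetic-geometric induction by a deformation-theoretic count of sections of the family $\mathfrak{X}\to\PP^1$. The smooth-locus, balanced-splitting computation you give is correct and gives exactly the expected bound $h^0 = D + m \le m(k+1)$.

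However, there is a genuine gap that the proposal flags but does not close: the \emph{unbalanced-splitting} locus. The inequality $\dim_s \mathrm{Sec}_k \le h^0(\PP^1, s^* T_{\mathfrak{X}/\PP^1})$ bounds only the Zariski tangent space, and for a rank-$m$ bundle of fixed (even very negative) degree $D$ on $\PP^1$, $h^0$ is unbounded over all splitting types: $\mathcal{O}(N)\oplus\mathcal{O}(a_2)\oplus\cdots\oplus\mathcal{O}(a_m)$ with $N$ large and the $a_i$ very negative has $h^0 = N+1$ regardless of $D$. So your identity $h^0 = D + m$ holds \emph{only} on the open stratum where every summand has degree $\ge -1$, and at an unbalanced section the tangent-space bound can far exceed $m(k+1)$. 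Upper-semicontinuity of $h^0$ shows the unbalanced locus is closed in $\mathrm{Sec}_k$, but says nothing about its dimension, and a priori an entire irreducible component of $\mathrm{Sec}_k$ could consist of unbalanced sections; the phrase ``deformation-to-the-generic-section together with a Hilbert scheme comparison'' does not describe a mechanism that rules this out. This is not a technicality: for moduli of rational curves and sections, controlling dimension at unbalanced points is notoriously the hard step, and there is no general argument of the shape you describe. A uniform bound in $d$ and $m$ for this locus would itself need a substantial new idea.

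A secondary gap concerns sections whose image \emph{meets} $\mathrm{Sing}(\mathfrak{X})$ without being contained in it. The identification of the tangent space with $H^0(\PP^1, s^*T_{\mathfrak{X}/\PP^1})$ (and the normal-bundle computation via adjunction) requires the image $s(\PP^1)$ to lie entirely in the smooth locus of $\mathfrak{X}$. Your induction on $m$ handles only sections whose image is contained in $\mathrm{Sing}(\mathfrak{X})\cap X$ (and there, the trivial projective Schwartz--Zippel bound, Corollary \ref{cor: projective geometric SZ}, suffices and avoids any degree hypothesis); but sections that merely touch $\mathrm{Sing}(\mathfrak{X})$ at finitely many points are not covered by either branch of your argument.
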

Note that in the above theorem any degree $d\geq 2$ is allowed, including the case $d=3$ for which the classical dimension growth conjecture is still open. 
In addition, as shown in Example \ref{ex: attaining upper bound}, the above bound is sharp (note that 
$\dim X(b) \ge mb-1$  as soon as $X$ contains a linear space of dimension $m-1$ defined over $\CC$). 

One can ask for more geometric information about $X(b)$, such as its number of irreducible components. In fact, we believe that the natural analogue of dimension growth over $\QQ$ should include some control on the number of irreducible components of $X(b)$ of largest possible  dimension. Indeed, if a bound of the form
\[
N(X,B)\leq c B^m
\]
holds in the global field setting for some $c,m>0$, then the analogue over $\CC(t)$ should be that the dimension of $X(b)$ is bounded by $mb$, \emph{and} the number of irreducible components of $X(b)$ of dimension $mb$ is bounded by $c$. For such a result, we obtain the following. 

\begin{maintheorem}\label{thm:main.dgc}
Let $X\subset \PP^n$ be an irreducible projective variety of degree $d\geq 6$ and dimension $m$  defined over $\CC(t)$. Then for $b\geq 1$ we have
\[
\dim X(b)\leq mb,
\]
and the number of irreducible components of $X(b)$ of dimension $mb$ is bounded by $O(d^7)$. 
If $b\geq 7$ then this number of irreducible components is bounded by $O(d^4)$.\footnote{In both cases, the bound on the number of irreducible components is independent of $n$.} 
\end{maintheorem}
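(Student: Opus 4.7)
Theorem~\ref{thm:main.siegel} already gives the dimension bound $\dim X(b) \le mb$, so the only remaining task is to bound the number of irreducible components of $X(b)$ of maximal dimension $mb$. The plan is to combine Theorem~\ref{thm:main.siegel} with the geometric $\CC(t)$-analogue of Heath-Brown's $p$-adic determinant method --- the main technical tool developed in the paper.

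Applied to $X$, the determinant method should produce a small collection of auxiliary hypersurfaces $G_1,\ldots,G_N \subset \PP^n$ of degree $D$, none containing $X$, such that every $mb$-dimensional irreducible component of $X(b)$ is contained in $(X\cap G_i)(b)$ for some $i$. Both $N$ and $D$ should be polynomial in $d$ and, crucially, independent of $b$; this is what makes the final bound $b$-independent. The hypothesis $d\ge 6$ presumably enters exactly here, to guarantee that the auxiliary hypersurfaces produced by the method do not contain $X$, so that $X\cap G_i$ is genuinely $(m-1)$-dimensional.

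Given such a collection, I would then feed it back into Theorem~\ref{thm:main.siegel}. Each intersection $X\cap G_i$ has dimension $m-1$ and total degree at most $dD$. For any irreducible component $Y$ of $X\cap G_i$ of degree $\ge 2$, Theorem~\ref{thm:main.siegel} applied to $Y$ gives $\dim Y(b) \le (m-1)b < mb$, so $Y(b)$ cannot support an $mb$-dimensional component of $X(b)$. Hence the top-dimensional components of $X(b)$ can only come from the linear $(m-1)$-plane components $L$ of the intersections $X\cap G_i$, each contributing at most one component $L(b)$. The number of linear $(m-1)$-plane components of $X\cap G_i$ is bounded by $\deg(X\cap G_i) \le dD$, so summing over $i$ gives a total of at most $N\cdot dD = O(d^{O(1)})$ top-dimensional components, which after optimizing the exponents should match $O(d^7)$.

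The main obstacle is constructing the geometric determinant method with the stated quantitative control. In the classical $p$-adic setting one builds the auxiliary polynomial from Vandermonde-like determinants of monomial evaluations at many $p$-adic points of $X$; the $\CC(t)$-analogue should play the corresponding game with $t$-adic Taylor expansions along a suitable closed point of $\AA^1$, and the delicate point is to keep the auxiliary degree $D$ polynomial in $d$ while independent of $b$. The sharper bound $O(d^4)$ for $b\ge 7$ should reflect that once $b$ is large enough there is ``room'' for more $t$-adic specializations, allowing a more efficient choice of auxiliary polynomial and a correspondingly smaller $D$; making this quantitative improvement rigorous is the most technical step of the whole argument.
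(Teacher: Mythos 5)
Your overall strategy---get the dimension bound from Theorem~\ref{thm:main.siegel}, then control the number of top-dimensional components via the determinant method---is the right instinct, but the central technical assumption in your proposal is false in the geometric setting, and the paper's argument has a rather different shape precisely because of this.

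The key gap: you assume the determinant method produces a \emph{bounded} family $G_1,\ldots,G_N$ of auxiliary hypersurfaces of degree $D$, with both $N$ and $D$ polynomial in $d$ and independent of $b$. Over $\CC(t)$ this fails on both counts. First, in Theorem~\ref{thm:proj.aux.poly} and Proposition~\ref{prop:aff.aux.poly} the auxiliary polynomial has degree $O_n(d^{1+1/n}b + d^3\ell)$, which grows linearly in $b$; unlike the global-field setting, one cannot make the auxiliary degree $b$-independent. Second, one cannot cover $X(b)$ by finitely many auxiliary hypersurfaces: the construction gives one polynomial $g_P$ for each tuple $P=(P_1,\ldots,P_\ell)\in\prod X_{p_i}(\CC)$ of residues, and since each $X_{p_i}(\CC)$ is a $\CC$-variety, this indexing set is uncountable. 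This is exactly the obstruction the introduction flags when explaining why there is no geometric analogue of Salberger's global determinant method and why $d\ge 6$ is assumed. As a consequence, the ``feed back into Theorem~\ref{thm:main.siegel}'' step cannot even get started, and Theorem~\ref{thm:main.siegel} by itself gives no control on component counts (the paper says explicitly that Proposition~\ref{prop:curve.siegel} ``does not suffice'' for Theorem~\ref{thm:main.dgc} for this reason). The role of $d\ge6$ is also not to ensure the $G_i$ avoid $X$ (the auxiliary polynomial is always taken coprime to $f$); it enters only through the numerology of Lemma~\ref{lem:bound.ell.ell'}.

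What the paper actually does: pass to the affine cone, project to a hypersurface (Proposition~\ref{prop: projection for affine varieties}), and induct on the ambient dimension via hyperplane slicing over $\CC$ (Theorem~\ref{thm:aff.dim.growth}, Lemma~\ref{lem:Bertini.aff}). The base case is affine surfaces in $\AA^3$ (Proposition~\ref{prop:aff.surfaces}), where the contributions of lines and of conics tangent to infinity are treated separately and bounded by $O(d^4)$ components (Lemmas~\ref{lem:lines.aff.surface}, \ref{lem:conics.aff.surface}); the rest is handled by the morphism $\varphi:Y\to\prod_{i=1}^\ell X_{p_i}$ given by reduction modulo $\ell\approx b/\sqrt d$ primes, whose fibres are captured by auxiliary curves, and a second application of the determinant method to those curves, with Lemma~\ref{lem:bound.ell.ell'} showing $2\ell+\ell'\le b+1$ for $d\ge 6$. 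The $O(d^7)$ versus $O(d^4)$ dichotomy reflects whether $b\ge7$ (so the non-line, non-conic contribution has dimension $\le b$) or $b<7$ (in which case the degree of $\varphi$, roughly $O(d^{7})$, must be counted). Your intuition that lines and small-degree curves are the source of the top-dimensional components is in the right direction, but the bookkeeping must be done fibrewise over $\prod X_{p_i}$ rather than by enumerating a finite list of auxiliary hypersurfaces.
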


The proofs of Theorems~\ref{thm:main.siegel} and~\ref{thm:main.dgc} both proceed via showing the stronger affine analogues 
Theorems \ref{thm:aff.dim.growth} and \ref{thm:aff.dgc.Siegel}, which are proven by induction on $m=\dim X$. 
 For $X\subset \AA^n$ a quasi-affine variety over $\CC(t)$, we denote by
\[
X(b) = \{(a_1, \ldots, a_n)\in X(\CC(t))\mid a_i\in \CC[t] \text{ and } \deg a_i<b \text{ for each } 0 \le i \le n\}
\] 
the set of integral points of height at most $b$ on $X$.
Again, $X(b)$ is naturally a quasi-affine variety over $\CC$.
A suitable uniform bound on $\dim X(b)$, with $X\subset \AA^n$, yields both of the above theorems for projective varieties by passing to the affine cone. The reason for moving to the affine situation is that induction on dimension works much better, as the integral points of height at most $b$ in $\AA^n$ are contained in a $b$-dimensional family of hyperplanes over $\CC$. 

The key difference between the proofs of Theorem~\ref{thm:main.siegel} and~\ref{thm:main.dgc} is the base of the induction.

 For Theorem~\ref{thm:main.siegel} we follow the approach outlined in~\cite{CDHNV}, where the induction begins with $m=1$, i.e.\ the curve case. For this we prove 
a strong bound on 
the number of integral points of bounded height on plane affine curves with at least two points at infinity. 

\begin{proposition}\label{prop:curve.siegel}
Let $C\subset \AA^2$ be an irreducible curve over $\CC(t)$. If $C$ has at least two distinct points at infinity, then for every $b\geq 1$,
\[
\dim C(b)\leq 1.
\] 
\end{proposition}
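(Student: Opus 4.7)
The plan is to realize $C(b)$ as (essentially) a space of sections of a compactified family, then apply a function-field analogue of Siegel's theorem, using the two-points-at-infinity hypothesis in a crucial way.

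First, I would pass to the normalization $\tilde C \to C$ with smooth projective completion $\bar{\tilde C}$ over $K = \CC(t)$, and let $S \subseteq \bar{\tilde C}$ be the preimage of the points at infinity of $C$ in $\bar C \subseteq \PP^2_K$. Since normalization does not identify geometric points, the hypothesis gives $|S(\bar K)| \geq 2$. The affine coordinates $x,y$ pull back to rational functions on $\bar{\tilde C}$ whose poles are supported on $S$, so, up to finitely many corrections coming from non-normal points of $C$, an integral $\CC(t)$-point of $C$ is the same as a $K$-point of the affine curve $\tilde C = \bar{\tilde C} \setminus S$.

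Next, I would spread $\bar{\tilde C}$ and $S$ out over $\PP^1_\CC$ to obtain a projective surface $\mathcal{X} \to \PP^1_\CC$ equipped with a horizontal divisor $\mathcal{S}$; an element of $C(b)$ then corresponds to a section $\sigma : \PP^1_\CC \to \mathcal{X}$ satisfying $\sigma^{-1}(\mathcal{S}) \subseteq \{\infty\}$. The core claim is that the $\CC$-scheme of such sections has dimension at most $1$. I would establish this via a function-field Siegel/Mordell theorem (Grauert--Manin--Samuel, or Mason--Stothers): the Euler characteristic $\chi(\tilde C) = 2 - 2g(\bar{\tilde C}) - |S|$ is $\leq 0$ since $|S| \geq 2$, and for affine curves with $\chi \leq 0$ the set of $K$-integral points is either finite or, in the isotrivial case, descends from a constant $\CC$-form whose $\CC$-points form a variety of dimension $\dim \tilde C = 1$. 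Either way, $\dim C(b) \leq 1$.

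The hardest case is $g(\bar{\tilde C}) = 0$ when the divisor $\mathcal{S}$ varies non-trivially with $t$, since the clean argument ``a morphism from $\PP^1$ to $\PP^1$ omitting two points is constant'' does not apply directly to sections over a family. Here one would either apply Mason--Stothers to an $S$-unit equation extracted from the coordinate functions on $\tilde C$, or analyze sections of a Hirzebruch surface avoiding a horizontal divisor of degree $\geq 2$ over the base. A secondary technical point is ensuring that the spreading-out and passage through normalization introduce only lower-dimensional discrepancies between $C(b)$ and the space of sections, which is routine once the models are chosen carefully.
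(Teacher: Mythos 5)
Your high‑level strategy---split by Euler characteristic $\chi = 2 - 2g - |S|$, invoke function‑field Siegel/Mordell, and handle the isotrivial case by descent to $\CC$---is in the same spirit as the paper's proof (which likewise treats genus $\geq 2$ by Manin--Grauert, genus $1$ and genus $0$ with $\geq 3$ places at infinity by Mason, and isotrivial curves by a normalization argument). But there is a genuine gap in the boundary case $\chi = 0$, i.e.\ genus $0$ with exactly two geometric points at infinity, which is exactly the case you flag as hardest and then do not close.

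The dichotomy you assert---``the set of $K$-integral points is either finite or, in the isotrivial case, descends from a constant $\CC$-form''---is false for $\chi = 0$. There $C$ is a twist of $\GG_m$, and $C(\cO_K)$ is governed by a Pell-type equation $x^2 - \beta y^2 = \gamma$ with $\beta \in \cO_K$ a non-square and $\gamma \neq 0$. Such an equation can have infinitely many $\cO_K$-solutions with $C$ \emph{not} isotrivial: for instance $x^2 - (t^2-1)y^2 = 1$ has the solutions coming from powers of $t + \sqrt{t^2-1}$ (e.g.\ $(t,1), (2t^2-1,2t), \ldots$), which are of strictly increasing height and certainly do not arise from a constant $\CC$-form. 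The correct statement---and the content of Lemma~\ref{le:pell} in the paper---is that the solutions of \emph{bounded height} are finite, proved by passing to $S$-units of the quadratic extension $K(\sqrt{\beta})$ and using that $\cO_{L,S}^\times(b)$ is finite modulo $\CC^\times$. This is an additional height-growth argument, not a corollary of any general Siegel finiteness. Citing ``Mason--Stothers on an $S$-unit equation'' points in the right direction, but you need to actually extract and run that argument to make the proof complete.

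A secondary, smaller issue: even in the Mason case (genus $0$, $\geq 3$ points at infinity, infinitely many integral points), the parametrization provided by Mason has denominator $c \in \CC[x]$ but numerators $a,b \in \cO_K[x]$, so $C$ itself is typically not isotrivial; the ``isotriviality'' only pertains to the pair $(\PP^1, S)$. Turning that into $\dim C(b) \le 1$ requires the explicit resultant/gcd analysis in the paper's proof of Proposition~\ref{prop:curve.siegel}, which bounds the possible values of the denominators up to units. Your ``routine once models are chosen carefully'' step is where this work lives; it is not a formality, because integrality is a statement about poles of the composite $\PP^1_\CC \to C \hookrightarrow \AA^2$ rather than about the abstract curve $\tilde C$.
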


If $C$ has genus $1$, or genus $0$ and at least $3$ points at infinity, Proposition~\ref{prop:curve.siegel} is closely related to Siegel's theorem on integral points over function fields (see Theorem \ref{thm:Mason}).
If $C$ has genus at least $2$, then this theorem follows from the Mordell conjecture over function fields (see Theorem \ref{thm:Manin-Grauert}).
Therefore the only remaining case is when $C$ is of genus $0$ with $2$ points at infinity, for which analyzing Pell equations over function fields plays a role. 
Since it seems difficult to control the number of irreducible components of $C(b)$, this result does not suffice to prove Theorem~\ref{thm:main.dgc}. 

The proof of Theorem \ref{thm:main.dgc} is more involved. We follow the approach of~\cite{BHB06}, where the base of the induction is for surfaces. 
To this aim, we develop a geometric version of the determinant method in the style of~\cite{BHB06} but with improvements from~\cite{CCDN-dgc, Walsh}. Over $\QQ$, the global determinant method of Salberger~\cite{Salb.upcoming} yields the strongest results around dimension growth, in particular leading to a proof of uniform dimension growth for projective varieties of degree $d\geq 4$. This global determinant method constructs a single auxiliary variety catching all rational or integral points on the variety of interest. Such a construction seems impossible in the geometric setting, since an affine curve defined over $\CC(t)$ can contain infinitely many integral points of bounded height. As such, it is not clear to us whether a geometric version of Salberger's global determinant method exists. This is the reason why we restrict to $d\geq 6$ in Theorem~\ref{thm:main.dgc}, while dimension growth over global fields is known for $d\geq 4$.

\begin{remark}
One can  approach Theorem~\ref{thm:main.siegel} and Theorem~\ref{thm:main.dgc} using spreading out methods\footnote{We thank Tim Browning for pointing this out to us.} as in~\cite{BrowningVishe}.
The key point is to have good control of the dependence on $q$ for dimension growth results over $\FF_q(t)$. 
For example, using~\cite[Thm.\,1.3]{Vermeulen:affinedg}, this approach leads to a quick proof of Theorem~\ref{thm:main.siegel} when $X$ has degree $d\geq 65$.
However, the more general results from~\cite{Pared-Sas, CDHNV, BinCluKat} for global fields leave the dependence on the field implicit, and while this dependence can likely be made explicit, this would require significant work.
Additionally, we note that spreading out  cannot be used to prove Theorem~\ref{thm:main.siegel} when $d=3$, simply because dimension growth over $\FF_q(t)$ remains unproven for $d=3$.
\end{remark}

In addition to proving Theorem \ref{thm:main.dgc}, we use the determinant method to deduce an analogue of the Bombieri--Pila theorem for affine curves~\cite{Bombieri-Pila}  and Heath--Brown's bounds for projective curves~\cite{Heath-Brown-Ann}. 
While a geometric version of the Bombieri--Pila bounds already appears in~\cite{CCL-PW}, where a bound on the  dimension of $C(b)$ for an affine curve $C$ is given, bounding the number of irreducible components is new. 
The geometric version of Heath--Brown's bound for projective curves is new in terms of both dimension and the number of irreducible components. 

\begin{maintheorem}[Affine curves]
\label{thm: affine curves}
Let $C\subset \AA^n$ be an irreducible affine curve of degree $d$ over $\CC(t)$. Then for every $b \ge 1$, 
\[
\dim C(b)\leq \left\lceil \frac{b}{d}\right\rceil.
\]
Moreover, the number of irreducible components of $C(b)$ of dimension $\lceil b/d\rceil$ is bounded by $O(d^3b)$.
\end{maintheorem}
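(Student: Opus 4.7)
The plan is to imitate the Bombieri--Pila strategy in the geometric setting, using the determinant method developed in the paper.

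First I would reduce to plane curves. A sufficiently generic linear projection $\pi : \AA^n \to \AA^2$ restricts to a birational morphism from $C$ onto an irreducible plane curve $\bar C$ of degree $d$, and the induced map $C(b) \to \bar C(b)$ is birational off a proper closed subset, so it suffices to handle $\bar C$. From now on I assume $C \subset \AA^2$ is cut out by an irreducible polynomial $F(x,y) \in \CC(t)[x,y]$ of $(x,y)$-degree $d$.

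Next I would stratify $C(b)$ using residues and jets at a generic $t_0 \in \CC$. For a truncation order $k$ and degree parameter $e \asymp b/d$ chosen optimally, the geometric determinant method from the paper should produce, for each $k$-jet $J$ of $C$ at $t_0$, an auxiliary polynomial $G_{t_0,J}(x,y) \in \CC(t)[x,y]$ of $(x,y)$-degree $\leq e$, not proportional to $F$, which vanishes on every point of $C(b)$ whose $k$-jet at $t_0$ equals $J$. The mechanism is standard: the matrix of monomial evaluations $x^iy^j$ with $i+j \leq e$ at the relevant integral points has entries with large $(t-t_0)$-adic valuation, forcing a sufficiently large minor to vanish and hence producing a nontrivial linear dependence among the monomials. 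Since $F$ is irreducible and $G_{t_0,J}$ is not a multiple of $F$, Bezout gives $|(C \cap \{G_{t_0,J}=0\})(\CC(t))| \leq de$. Combining this with the dimension of the $k$-jet base of $C$ at $t_0$ and the fibre of the jet map, and optimizing over $k$ and $e$, yields $\dim C(b) \leq \lceil b/d \rceil$.

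For the bound on the number of top-dimensional components, I would combine the refined version of the geometric determinant method from the paper, which controls the number of auxiliary polynomials $G_{t_0,J}$ polynomially in $d$, with a count on the parameters $(t_0,J)$ supporting a top-dimensional component of $C(b)$. The factor $b$ in $O(d^3b)$ reflects the one-dimensional family of residues $t_0$ (discretized in the counting), while the factor $d^3$ comes from estimating the number of distinct auxiliary curves of $(x,y)$-degree $\leq e$ that can be compatible with a given top-dimensional stratum. The main obstacle in making this precise is controlling the singular locus of the reduction $C_{t_0}$ and the potentially non-equidimensional behaviour of the jet space there; this requires a careful refinement of the above argument so that the contribution from these degenerate loci is absorbed into the final $O(d^3b)$ estimate.
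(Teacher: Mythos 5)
Your broad outline — project to a plane curve, apply a determinant-method argument, finish with B\'ezout — matches the spirit of the paper, but the specific determinant method you invoke and several of your quantitative parameters do not match, and I think there is a real gap.

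The paper's Proposition~\ref{prop:aff.aux.poly} (the affine auxiliary-polynomial result, obtained from Theorem~\ref{thm:proj.aux.poly} via the Ellenberg--Venkatesh trick) is a \emph{multi-prime} statement: one fixes $\ell=\lceil b/d\rceil$ distinct primes $p_1,\dots,p_\ell\in\cO_K$, and for any tuple of smooth points $P_i\in C_{p_i}(\CC)$ produces a single auxiliary curve catching all of $C(b;P_1,\dots,P_\ell)$. The proof of Theorem~\ref{thm: affine curves} then shows the reduction map $Y\to\prod_i C_{p_i}$ is finite-to-one of degree $O(d^3 b)$, and reads off $\dim C(b)\le\ell$ and the component count from the degree of this map. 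You instead propose a \emph{single-prime, higher-jet} stratification at a generic $t_0\in\CC$. That is a genuinely different variant (closer to Heath-Brown's original $p$-adic method), and the paper's divisibility Lemma~\ref{lem:divisibility} and Proposition~\ref{prop:aff.aux.poly} are not stated in that form; you would need to prove a separate lemma giving divisibility by $(t-t_0)^{k\binom{s}{2}-O(ks)}$ for $s$ points with the same $k$-jet. You cannot simply cite ``the geometric determinant method from the paper'' to do this.

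There are also concrete quantitative problems. First, the degree parameter $e\asymp b/d$ for the auxiliary polynomial is much too small: in the paper, $M$ is taken of size $O(d^{1+1/n}b+d^3\ell)=O(d^2 b)$ precisely so that the error terms $O(\ell d^{2-1/n}/M)$ and $O(h(f)/(d^{1/n}M))$ in Equation~\eqref{eq:kappa.final.bound} become negligible; with $e\asymp b/d$ those error terms would dominate the main term and the argument would give no nontrivial auxiliary polynomial at all. The factor $b$ in $O(d^3 b)$ therefore comes from the degree $O(d^2 b)$ of the auxiliary polynomial via B\'ezout, not from ``the one-dimensional family of residues $t_0$'' as you assert (the residue $t_0$, or the primes $p_i$, are fixed once and for all). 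Second, to reach the affine exponent $\lceil b/d\rceil$ rather than the projective $\approx 2b/d$ one must exploit the Ellenberg--Venkatesh projectivization that inflates $h(F)$ by $d(b-1)$; your sketch applies a bare determinant argument to the affine polynomial $F(x,y)$ and would only give roughly twice the claimed bound. Finally, the issue you flag — the singular locus of $C_{t_0}$ and non-equidimensional jet spaces — is handled in the paper by the cleaner device of choosing the primes so that each $C_{p_i}$ is geometrically integral and the chosen representatives $Q_j$ of the top-dimensional components reduce to smooth points; this is not hard, but it needs to be made precise rather than deferred as an obstacle.
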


\begin{maintheorem}[Projective curves]
\label{thm: projective curves}
Let $C\subset \PP^n$ be an irreducible projective curve of degree $d$ over $\CC(t)$. Then for every $b \ge 1$,
\[
\dim C(b)\leq \left\lfloor \frac{2(b-1)}{d}\right\rfloor +1 \le \left\lceil \frac{2b}{d}\right\rceil.
\]
Moreover, the number of irreducible components of $C(b)$ of dimension $\left\lfloor \frac{2(b-1)}{d}\right\rfloor +1$ is bounded by $O(d^3b)$. 
\end{maintheorem}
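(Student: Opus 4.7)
My plan is to combine a deformation-theoretic dimension estimate with an adaptation of the geometric determinant method developed in this paper (already used for Theorem~\ref{thm: affine curves}).

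First, I reduce to the plane case. A generic linear projection $\pi\colon \PP^n \dashrightarrow \PP^2$ with coefficients in $\CC$ maps $C$ birationally to an irreducible plane curve $\pi(C) \subset \PP^2$ of degree $d$. Since $\pi$ is defined over $\CC$, it induces a $\CC$-morphism $C(b) \to \pi(C)(b)$ that preserves primitive heights and is birational on top-dimensional components, so both the dimension and the number of top-dimensional components transfer from $C(b)$ to $\pi(C)(b)$. Hence one may assume $C \subset \PP^2$.

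Next, I stratify $C(b) = \bigsqcup_{e=0}^{b-1} V_e$ where $V_e$ parametrizes $\CC(t)$-points whose primitive representative $(a_0, a_1, a_2)$ satisfies $\max_i \deg a_i = e$. Passing to the $\CC$-integral model $\overline{C} \subset \PP^2 \times \PP^1$, the locus $V_e$ identifies with the moduli of sections $s\colon \PP^1 \to \overline{C}$ of the projection $\overline{C} \to \PP^1$ with $\deg(s^*\cO_{\PP^2}(1)) = e$. Standard deformation theory bounds the tangent space $H^0(\PP^1, s^*T_{\overline{C}/\PP^1})$ via Riemann--Roch, yielding $\dim V_e \leq \lfloor 2e/d \rfloor + 1$. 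Maximizing over $e \leq b - 1$ gives
\[
\dim C(b) \leq \left\lfloor \frac{2(b-1)}{d} \right\rfloor + 1.
\]

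For the $O(d^3 b)$ bound on top-dimensional components, note that the top components lie in some $V_{e^*}$ with $e^*$ achieving the maximal dimension. I would apply the geometric determinant method used to prove Theorem~\ref{thm: affine curves}, now to the parameterization of $V_{e^*}$ by triples $(a_0, a_1, a_2) \in \CC[t]_{\leq e^*}^3$ modulo $\CC^*$ satisfying the defining equation of $C$ together with the primitivity condition $\gcd(a_0,a_1,a_2) = 1$. Analogously to the affine case, the method produces a low-degree auxiliary polynomial whose zero locus contains $V_{e^*}$ outside a proper subvariety, bounding the number of top-dimensional components by $O(d^3 b)$.

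The main obstacle is this last step. While the dimension bound is essentially classical deformation theory, adapting the determinant method from pairs (affine curves in $\AA^2$) to triples (homogeneous coordinates on $\PP^2$) requires extra care: one must handle the primitivity and $\CC^*$-equivalence conditions, and propagate the implicit constant $O(d^3 b)$ from the affine setting without loss, accounting for the factor of two that appears in the dimension bound relative to Theorem~\ref{thm: affine curves}.
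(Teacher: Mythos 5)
Your projection step matches the paper (Lemma~\ref{lem: projection for proj varieties}), but the core of your argument diverges from the paper and has a genuine gap in the dimension bound.

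The deformation-theoretic estimate is not correct as stated. For a section $s\colon \PP^1\to \overline{C}$ of a \emph{smooth} surface $\overline{C}\subset \PP^2\times\PP^1$ of bidegree $(d,h)$, one has $s^*T_{\overline{C}/\PP^1}\cong N_{s(\PP^1)/\overline{C}}$, and adjunction gives $\deg N_{s(\PP^1)/\overline{C}} = s(\PP^1)^2 = (3-d)e - h$, which is nonpositive once $d\geq 3$. Riemann--Roch would then yield $\dim V_e = 0$, which is \emph{stronger} than $\lfloor 2e/d\rfloor + 1$ and is in fact false: Example~\ref{ex: proj curve with many points}, with $C=\{yz^{d-1}=x^d\}$ over $\CC$ (so $h=0$), has $\dim V_e \approx 2e/d + 1 > 0$. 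The resolution is that $\overline{C}$ is singular exactly in the cases where positive-dimensional families occur, and the sections pass through singular points of the fibers; there $T_{\overline{C}/\PP^1}$ fails to be a line bundle and the adjunction computation breaks. To salvage the estimate one would need to pass to a normalization and track the contribution of singularities to $\deg s^*T$, which is not ``standard Riemann--Roch'' and is precisely the uniformity-in-$C$ content the theorem is asserting. Your bound $\dim V_e \le \lfloor 2e/d\rfloor + 1$ is therefore an unjustified claim.

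The component-count step is also hand-waved, and the worries you raise about primitivity and $\CC^\times$-equivalence are unnecessary because the paper develops a genuinely projective determinant method (Theorem~\ref{thm:proj.aux.poly}), not an adaptation of the affine one. The paper's proof picks $\ell = \lfloor 2(b-1)/d\rfloor + 1$ primes $p_1,\ldots,p_\ell$ at which $C$ has geometrically integral reduction and the marked points stay smooth, and uses Theorem~\ref{thm:proj.aux.poly} (all $\mu_i=1$) plus B\'ezout to show that the reduction map $Y \to \prod_i C_{p_i}$ is finite-to-one of degree $O(d^3 b)$ on the relevant locus $Y\subseteq C(b)$. This single argument delivers \emph{both} conclusions at once: $\dim C(b)\le \dim \prod_i C_{p_i} = \ell$, and the number of $\ell$-dimensional components is bounded by the degree $O(d^3b)$. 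Your two-step plan (deformation theory for dimension, determinant method for components) is structurally redundant and, as above, the first step does not hold up.
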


These results follow from the more precise Theorem~\ref{thm:proj.aux.poly} and Proposition~\ref{prop:aff.aux.poly}, which take into account the height of the defining polynomial of $C$.

For a projective surface one typically expects most rational points to accumulate on lines contained in the  surface.
Using the determinant method, we show that when removing all lines on our surface, one obtains strictly better bounds than in Theorem~\ref{thm:main.dgc}.  
This is similar to classical dimension growth results in the literature 
(see e.g.\,\cite[Theorem 10]{Heath-Brown-Ann} or \cite[Theorem 0.5]{Salberger-dgc}).

\begin{maintheorem}[Projective surfaces]\label{thm:proj.surfaces}
Let $X\subset \PP^3$ be an irreducible projective surface of degree $d\geq 6$ over $\CC(t)$.
Let $L$ denote the union of all lines $\ell \subset X$, 
and let $U\subset X$ be the  complement of $L$ in $X$, which is a quasi-projective variety. 
Then for every $b \ge 1$, 
\[
\dim U(b)\leq \left(1+\max\left\{ \frac{1}{2}, \frac{3}{2\sqrt{d}} + \frac{1}{3}\right\}\right) b + 2.
\]
\end{maintheorem}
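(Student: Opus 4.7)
The stated bound comes from two complementary arguments; the theorem takes the worse of the two, which corresponds to the one that is stronger in the relevant range of $d$.

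For the $\frac{1}{2}$ bound, which is the stronger of the two for $d \geq 81$, I would argue by slicing. Passing to an affine chart $X^\circ = X \cap \{x_0 \neq 0\}$, write integral points of $U \cap X^\circ$ as $(a_1, a_2, a_3) \in \CC[t]_{<b}^3$ and project onto the first coordinate $(a_1, a_2, a_3) \mapsto a_1$. For each fixed $c \in \CC[t]_{<b}$ in the image, the fibre embeds into $(X^\circ \cap \{x_1 = c\})(b)$, which is the set of $\CC(t)$-integral points of an affine curve of degree at most $d$. Since $L$ has been removed from $X$, any component of this slice curve that is a line in $\AA^2_{\CC(t)}$ corresponds to a line in $X \subset \PP^3$ and therefore contributes nothing to $U(b)$; the remaining components have degree $e \geq 2$, each giving fibre dimension at most $\lceil b/e \rceil \leq \lceil b/2 \rceil$ by Theorem~\ref{thm: affine curves}. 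A careful accounting of the image and fibre dimensions (using that the locus of $c$'s over which the slice curve degenerates has bounded codimension in $\AA^b$) then yields a bound of the shape $(1 + \tfrac{1}{2})b + O(1)$; the contribution from the boundary hyperplane $\{x_0 = 0\}$ is treated separately via Theorem~\ref{thm: projective curves} on the degree-$d$ curve that it cuts out, and is absorbed into the additive $+2$.

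For the $\frac{3}{2\sqrt{d}} + \frac{1}{3}$ bound, which is the stronger of the two for moderate $d \geq 6$, I would deploy the geometric determinant method developed in this paper, in the style of~\cite{BHB06,CCDN-dgc,Walsh}. Fixing a generic specialization place $t_0 \in \PP^1$ and the induced morphism $\pi: U(b) \to X_{t_0}(\CC)$, one has $\dim U(b) \leq 2 + \max_{\bar P} \dim \pi^{-1}(\bar P)$. The determinant method should produce, for each residue $\bar P \in X_{t_0}(\CC)$, an auxiliary hypersurface $F_{\bar P} \subset \PP^3$ of an optimally chosen degree $k$ with $\pi^{-1}(\bar P) \subset (X \cap F_{\bar P})(b)$. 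The intersection $X \cap F_{\bar P}$ is a curve of degree at most $dk$; components that are lines lie in $L$ and hence do not contribute, while the remaining components have degree $\geq 2$ and contribute at most $\lceil 2b/e \rceil$ each to $\dim \pi^{-1}(\bar P)$ by Theorem~\ref{thm: projective curves}. Choosing $k \sim \sqrt{d}$ balances the number of components against their individual contributions and produces the main term $\tfrac{3}{2\sqrt{d}}\,b$, while the extra $\tfrac{1}{3}\,b$ arises from the sharpenings of~\cite{Walsh,CCDN-dgc} imported into the determinant method.

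The hard part of this plan will be the second, determinant-method step: setting up the auxiliary polynomial $F_{\bar P}$ uniformly in $\bar P$ with carefully controlled degree and $t$-height, ensuring that $X \not\subset F_{\bar P}$, and extracting exactly the constants $\tfrac{3}{2\sqrt{d}}$ and $\tfrac{1}{3}$ from the subsequent optimization. This last point relies crucially on the component-counting form of Theorem~\ref{thm: projective curves}, which eliminates any logarithmic loss and lets the accounting produce these clean constants.
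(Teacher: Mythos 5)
Your proposal diverges substantially from the paper's proof and contains gaps that would prevent either branch of your argument from delivering the stated bound. The first and most serious issue is your $\tfrac{1}{2}$ argument. Restricting $X(b)$ to an affine chart $\{x_0 \neq 0\}$ and then counting integral triples $(a_1,a_2,a_3)\in\CC[t]_{<b}^3$ does not recover the projective counting function $N(X,b)=\dim X(b)$: a projective point $(a_0:a_1:a_2:a_3)$ with $a_0\neq 0$ but $a_0\neq 1$ contributes to $X(b)$ while the triple $(a_1/a_0,a_2/a_0,a_3/a_0)$ need not be integral. Already for $X=\PP^2$ one has $N(\PP^2,b)=3b-1$ while $\Naff(\AA^2,b)=2b$ and the boundary line gives $2b-1$, so the projective count genuinely exceeds what your chart-plus-boundary decomposition can see. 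The paper instead passes to the affine cone $Y\subset\AA^4_K$ and uses the identity $N(U,b)=\Naff(Y,b)-1$; but then $Y$ is a $3$-fold and a single hyperplane slice produces a \emph{surface}, not a curve, so one cannot jump straight to Theorem~\ref{thm: affine curves}. The whole content is then delegated to the moreover clause of Proposition~\ref{prop:aff.surfaces} applied to the slices $H_c\cap Y$, after observing that any line on $H_c\cap Y$ for $c\neq 0$ would span a plane through the cone point, hence a line on $X$, which has been removed. Your argument (if it worked) would prove $\dim U(b)\le\tfrac{3}{2}b+O(1)$ uniformly in $d$, which for small $d$ is strictly stronger than the theorem — a sign that something has gone wrong.

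Your determinant-method branch also does not match the function-field mechanics. Over $\CC(t)$ every height-one prime has the same ``size,'' so one cannot emulate Heath--Brown's choice of a single prime of size $\sim B^{1/\sqrt{d}}$; the paper instead takes $\ell=\lfloor(b-1)/\sqrt{d}\rfloor+1$ primes $p_1,\dots,p_\ell$ so that $\sum 1/\mu_i^{1/2}\ge (b-1)/\sqrt{d}$ in Proposition~\ref{prop:aff.aux.poly}, which is what makes the auxiliary polynomial exist for all $b$. Fibering over $\prod_i X_{p_i}$ contributes dimension $2\ell\approx 2(b-1)/\sqrt{d}$, and the extra $\tfrac{1}{3}(b-1)$ in the bound comes not from the Walsh/CCDN sharpenings but from the fact that, after also removing conics (Lemma~\ref{lem:conics.aff.surface}), the remaining auxiliary-curve components have degree $e\geq 3$. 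Finally, the coefficient $\tfrac{3}{2\sqrt{d}}$ is not just $2/\sqrt{d}$: there is a nontrivial saving $-\ell/(e-1)$ in the second-level determinant method applied to each component $D$, exploiting that the primes $p_i$ already constrain $D$ with multiplicity $\mu_i<e$, which reduces the number $\ell'$ of new primes needed. This two-level structure (Equation~\eqref{eq: ellprime} and Lemma~\ref{lem:bound.ell.ell'}) is essential and absent from your sketch. Put differently, the $\max$ in the theorem is not a choice between two complete proofs of the same bound; it arises because $U(b)$ splits into a conic-dominated piece (giving $\tfrac{b+1}{2}$) and a generic piece (giving $(\tfrac{3}{2\sqrt{d}}+\tfrac{1}{3})(b-1)+\tfrac{5}{2}$), both already accounted for inside Proposition~\ref{prop:aff.surfaces}.
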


Note that in the above result the quantity $\max\left\{ \frac{1}{2}, \frac{3}{2\sqrt{d}} + \frac{1}{3}\right\}$ is strictly smaller than $1$, since $d\geq 6$.
Hence this result improves upon the bound coming from Theorem~\ref{thm:main.dgc} (for $b \ge 5$).

\subsection*{Structure of the paper} 
In Section \ref{sec: prelims} we introduce needed definitions, and give some basic preliminary results such as a geometric version of the Schwartz--Zippel bounds and the Bombieri--Vaaler theorem. We also recall a version of Siegel's theorem over function fields (due to Mason), and a version of the Mordell conjectures over function fields (due to Manin--Grauert). 
Finally, we provide some examples which illustrate the geometric behavior of $X(b)$ in several interesting situations. 
In Section  \ref{sec: counting points on curves}, 
 we prove Proposition~\ref{prop:curve.siegel}, which  forms the base of the induction for Theorem~\ref{thm:main.siegel}. 
 In Section \ref{sec: the determinant method} we develop a geometric version of the determinant method needed to control the number of irreducible components of largest possible dimension, and to prove Theorem \ref{thm:main.dgc}. We also prove Theorems \ref{thm: affine curves} and \ref{thm: projective curves}.  
In Section \ref{sec:geom.dim.growth}, we prove Theorems~\ref{thm:main.siegel}, \ref{thm:main.dgc} and~\ref{thm:proj.surfaces}. 
The projective case is deduced from the affine versions 
Theorems~\ref{thm:aff.dgc.Siegel} and~\ref{thm:aff.dim.growth}, which we  prove by induction on dimension. The base case for this induction is counting on affine surfaces, which is given in Proposition \ref{prop:aff.surfaces}. 

\subsection*{Acknowledgements}  
The authors wish to thank Raf Cluckers for many useful discussions, for suggesting this problem, and for his encouragement and support.  
They also thank Tim Browning for useful discussions, and Lo\"{i}s Faisant for useful discussions and for comments on an earlier version of the paper.
T.\,B.\,was supported by FWO Flanders (Belgium) with grant number 1131925N. 
Y.\,I.\,H. was partially supported by FWO Flanders (Belgium) with grant number 12B4X24N.
F.\,V.\,was supported by FWO Flanders (Belgium) with grant number 11F1921N and by the Humboldt Foundation.

\section{Preliminaries}
\label{sec: prelims}

\subsection{Definitions and notation}

In general, throughout this paper we will denote $K = \CC(t)$ and $\cO_K = \CC[t]$. Given $x = (x_0 : \ldots : x_n)$ in $\PP^n(K)$, we may assume that the $x_i$ are in $\cO_K$ and coprime. Then we define the \emph{height} of $x$ to be
\[
h(x) = \max_i \deg x_i.
\]
Similarly, if $x = (x_1, \ldots, x_n)\in \cO_K^n$  we define the height of $x$ to be
\[
h(x) = \max_i \deg x_i.
\]
For $X\subset \PP^n_K$ a quasi-projective variety and $b$ a positive integer, we denote by 
\[
X(b) = \{x\in X(K): h(x) < b\}\subset \PP^n(K)
\]
the set of \emph{rational points of height less than $b$ on $X$}. For a quasi-affine variety $X\subset \AA^n_K$, we similarly define 
\[
X(b) = \{x\in X\cap \cO_K^n: h(x)<b\} \subset \cO_K^n
\]
to be the set of \emph{integral points of height less than $b$ on $X$}. 

\begin{lemma}\label{lem:Xb.is.variety}
Let $b$ be a positive integer. 
\begin{enumerate}
\item \label{it:aff.Xb} If $X\subset \AA^n_K$ is a quasi-affine variety, then $X(b)$ is naturally a quasi-affine variety\footnote{Note that $X(b)$ may not be reduced (see e.g.~Example \ref{ex: non-reduced C(b)}), in which case we can consider the associated reduced variety. For our purposes, this makes little difference.} 
 in $\AA^{bn}_{\CC}$.
\item \label{it:proj.Xb} If $X\subset \PP^n_K$ is a quasi-projective variety, then $X(b)$ is naturally a quasi-projective variety in $\PP^{b(n+1)-1}_{\CC}$. 
\end{enumerate}
\end{lemma}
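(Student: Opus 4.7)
The plan is to exhibit an explicit parametrization of the height-bounded tuples by the coefficients of their entries (viewed as polynomials in $t$) and then check that the condition of lying on $X$ translates into polynomial conditions in those coefficients.

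For (\ref{it:aff.Xb}), I write each entry as $x_i = \sum_{j=0}^{b-1} a_{i,j} t^j$ with $a_{i,j} \in \CC$; this bijects $\{x \in \cO_K^n : h(x) < b\}$ with $\AA^{bn}(\CC)$, with coordinates $a_{i,j}$. Next, I realize the given quasi-affine $X$ as a locally closed subset $V \setminus W \subset \AA^n_K$ with defining polynomials $f_1,\dots,f_r$ (for $V$) and $g_1,\dots,g_s$ (for $W$) in $K[X_1,\dots,X_n]$, and after clearing $t$-denominators I may assume these lie in $\cO_K[X_1,\dots,X_n]$. Substituting the parametrization into any such polynomial $f$ yields an element of $\CC[t][a_{i,j}]$; collecting powers of $t$ writes it as $\sum_\ell t^\ell P_\ell(a_{i,j})$ with $P_\ell \in \CC[a_{i,j}]$, and the equation $f(x) = 0$ becomes the simultaneous vanishing of the $P_\ell$. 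Applying this to the $f_k$'s and $g_j$'s cuts out $V(b)$ and $W(b)$ as closed subvarieties of $\AA^{bn}_\CC$, and hence $X(b) = V(b) \setminus W(b)$ is quasi-affine, as claimed.

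For (\ref{it:proj.Xb}), I use the same idea projectively: each entry is written as $a_i = \sum_{j=0}^{b-1} a_{i,j} t^j$ with the $b(n+1)$ coefficients taken up to an overall $\CC^\times$-scaling, giving the target $\PP^{b(n+1)-1}_\CC$. Writing $X = V \setminus W$ for closed $V, W \subset \PP^n_K$ and choosing homogeneous defining polynomials in $\cO_K[X_0,\dots,X_n]$ (after clearing $t$-denominators), I substitute and again read off coefficients of powers of $t$. The key observation is that, since the original polynomials are homogeneous of some degree $D$ in the $X_i$ and each $a_i$ is linear in the new variables $a_{i,j}$, every resulting polynomial $Q_\ell(a_{i,j})$ is homogeneous of degree $D$ in the $a_{i,j}$. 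Their vanishing therefore defines a closed subvariety of $\PP^{b(n+1)-1}_\CC$, and the argument for $X(b) = V(b) \setminus W(b)$ concludes the proof.

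The one point deserving a comment — and the reason the lemma says $X(b)$ \emph{is} (naturally) a variety in $\PP^{b(n+1)-1}_\CC$ rather than identifying it as a subset of $\PP^n(K)$ — is that the parametrization $\PP^{b(n+1)-1}_\CC \to \PP^n(K)$ need not be injective on the locus of interest: a single $(a_0:\cdots:a_n) \in X(K)$ may admit several representatives with entries in $\cO_K$ of degree $< b$, differing by a $K^\times$-scalar preserving this condition. The statement is understood with the scheme structure on the coefficient space, which is also consistent with the non-reducedness footnote. This observation is essentially cosmetic here but will matter when dimension-counting later in the paper. Beyond this, no genuine obstacle arises; the proof is a direct unwinding of the definitions.
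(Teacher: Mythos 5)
Your affine argument (\ref{it:aff.Xb}) is correct and matches the paper's: write each coordinate as a polynomial in $t$ of degree $<b$, substitute into the defining (and removal) polynomials, collect coefficients of powers of $t$, and obtain algebraic conditions on the $a_{ij}$ cutting out a locally closed subvariety of $\AA^{bn}_\CC$.

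Your projective argument (\ref{it:proj.Xb}) has a genuine gap. You substitute $a_i = \sum_j a_{ij}t^j$ into homogeneous defining polynomials and obtain a closed subvariety $V(b)\subset\PP^{b(n+1)-1}_\CC$, but you do not impose that the $(n+1)$-tuple $(a_0,\dots,a_n)$ is \emph{coprime}. Without that restriction, the parametrization $\PP^{b(n+1)-1}_\CC\dashrightarrow\PP^n(K)$ is not injective, and the locus you cut out is strictly larger than $X(b)$, in general of a different (larger) dimension. A concrete example: take $X=\{(0:1)\}\subset\PP^1_K$. The set $X(b)$ as defined in the paper is a single point, but your $V(b)$ (the locus $a_{0,0}=\dots=a_{0,b-1}=0$ in $\PP^{2b-1}_\CC$) is a $\PP^{b-1}$. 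You flag the non-injectivity in your closing paragraph, but then conclude it is ``essentially cosmetic'' and that ``the statement is understood with the scheme structure on the coefficient space''; this is exactly where the error lies, since the paper's $X(b)$ is by definition a set of rational points of $\PP^n(K)$ and your coefficient-space scheme does not correspond to it. The paper resolves this by first passing to the open subset $U\subset\AA^{b(n+1)}_\CC$ of coprime $(n+1)$-tuples of polynomials (open by non-vanishing of resultants, and a cone with vertex $0$), then working in $U/\CC^\times\subset\PP^{b(n+1)-1}_\CC$, where the map to $\PP^n(K)$ is injective, and only there applying the coefficient-extraction argument. You need to add this coprimality step to make (\ref{it:proj.Xb}) correct, and without it the dimension counts later in the paper would fail.
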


\begin{proof}
For (\ref{it:aff.Xb}), suppose that $X$ is defined by the equations $f_1(x)= \ldots= f_N(x) =0, g(x)\neq 0$ with $f_1, \ldots, f_N, g\in \CC[t][x_1, \ldots, x_n]$. Taking $x_i = \sum_{j = 0}^{b-1} a_{ij}t^j$ with $a_{ij}\in \CC$, we obtain the equations for $X(b)$ by expanding every $f_k(x)$ into powers of $t$ and requiring that all coefficients coming from $f_1, \ldots, f_N$ vanish as polynomials in $(a_{ij})_{ij}$,  and at least one coefficient coming from $g$ does not vanish as a polynomial in $(a_{ij})_{ij}$.

For (\ref{it:proj.Xb}), we consider $\AA^{b(n+1)}_{\CC}$ as the space of $(n+1)$-tuples of polynomials of degree strictly smaller than $b$. Let $U\subset \AA^{b(n+1)}_{\CC}$ be the subset of such $(n+1)$-tuples which are coprime. Using non-vanishing of resultants, $U$ is an open and dense subset of $\AA^{b(n+1)}$. It is also clear that $U$ is a cone with cone point $0$. We can then define $X(b)$ as a subset of $U / \CC^\times \subset \PP^{b(n+1)-1}_\CC$, and reason similarly as in the affine case.
\end{proof}

Given $X$ as above, we are interested in the geometric properties of $X(b)$.  Evidently, if $X$ is  defined over $\CC$, then we see that $X(1)$ is  isomorphic to $X(\CC)$. Note that even if $X$ is smooth, $X(b)$ may be badly behaved (see Remark \ref{rem: comparison to jets}).

\begin{definition}
Let $X \subset \PP^n_{K}$ be a quasi-projective variety. We define 
\[
N(X,b):= \dim X(b).
\]
If $X\subset \AA^n_K$ is a quasi-affine variety, then we set
\[
\Naff(X,b):= \dim X(b). 
\]
\end{definition}

The functions $N(X,b)$ and $\Naff(X,b)$ are the analogues of the usual counting functions over global fields.  Namely, they measure the density of $\CC(t)$-points or $\CC[t]$-points lying on $X$. 
\begin{remark}
Note that given a variety $X \subseteq \PP^n_{\CC(t)}$ and $b \ge 1$,
the variety $X(b)$ depends on the embedding of $X$ into $\PP^n_{\CC(t)}$. For example, $C=\{ yz^{d-1}=x^d\} \simeq \PP^1_{\CC(t)}$ but $C(b)$ is very different from $\PP^1_{\CC(t)}(b)$ (see Example \ref{ex: proj curve with many points}). 
\end{remark}
%

\subsection{A trivial bound}

We give a geometric version of the well known Schwartz--Zippel bounds, which include a bound on the number of irreducible components of largest possible dimension. 
The dimension bound is also proven in~\cite[Lem.\,5.1.1]{CCL-PW}.

\begin{proposition}[Geometric Schwartz--Zippel]
\label{prop: affine geometric SZ}
Let $X \subseteq \AA^n_{K}$ be an affine variety of pure dimension $m$ and degree $d$. 
Then for every $b \ge 1$
 we have 
\[
\Naff(X,b) \le mb.
\]
Moreover, the number of irreducible components of $X(b)$ of dimension $mb$ is at most $d$.
\end{proposition}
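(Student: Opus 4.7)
The plan is to prove this via a Noether normalization argument followed by a fiberwise analysis of a naturally associated projection.

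First, I would decompose $X=X_1\cup\cdots\cup X_r$ into irreducible components, each of dimension $m$ and of degrees $d_i$ with $\sum_i d_i = d$. Since $X(b)=\bigcup_i X_i(b)$, every top-dimensional component of $X(b)$ is a top-dimensional component of some $X_i(b)$, so both conclusions reduce to the case where $X$ is irreducible.

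Assuming $X$ irreducible, Noether normalization over the infinite field $\CC$ provides a $\CC$-linear change of coordinates on $\AA^n_K$ under which the projection $p\colon \AA^n_K\to\AA^m_K$ onto the first $m$ coordinates becomes dominant and generically finite on $X$, of degree $d$ (the generic fiber is a linear section of $X$ of size $\deg X=d$ by B\'ezout). A $\CC$-linear change of variables preserves the height function on $\cO_K^n$, hence replaces $X(b)$ by an isomorphic variety. Consider the induced projection
\[
\pi\colon X(b)\to\AA^{bm}_\CC,\qquad (a_{ij})_{1\le i\le n,\,0\le j<b}\mapsto (a_{ij})_{1\le i\le m,\,0\le j<b}.
\]
The fiber of $\pi$ over $\vec a$, setting $v_i(a):=\sum_j a_{ij}t^j\in\cO_K$, is contained in $(p|_X)^{-1}(\vec v(a))$.

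The crucial step, and main obstacle, is to show that the generic fibers of $\pi$ are finite of cardinality at most $d$. Let $Z\subsetneq\AA^m_K$ be the complement of a dense open on which $p|_X$ is finite of degree $d$, and pick defining polynomials $g_1,\ldots,g_k\in\CC[t][y_1,\ldots,y_m]$ for $Z$. Substituting $y_i = v_i(a)$ and equating coefficients of $t$ to zero in $\CC[t,(a_{ij})]$ exhibits
\[
\tilde Z:=\{\vec a\in\AA^{bm}_\CC : \vec v(a)\in Z(K)\}
\]
as a closed subvariety of $\AA^{bm}_\CC$. It is proper: if every $g_l(\vec v(a))$ vanished identically in $\CC[t,(a_{ij})]$, then specialising $a_{ij}=0$ for $j\ge 1$ would give $g_l(a_{10},\ldots,a_{m0})\equiv 0$ in $\CC[t,(a_{i0})]$, forcing $g_l=0$ and contradicting $Z\subsetneq\AA^m_K$. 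Therefore, for $\vec a\notin\tilde Z$ the fiber $\pi^{-1}(\vec a)$ is contained in the finite set $(p|_X)^{-1}(\vec v(a))$ of cardinality at most $d$. This already yields $\dim X(b)\le bm$.

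For the component count, let $V_1,\ldots,V_s$ be the irreducible components of $X(b)$ of dimension $bm$. Since generic fibers of $\pi$ are zero-dimensional, each $V_i$ maps dominantly to $\AA^{bm}_\CC$. The pairwise intersections $V_i\cap V_j$ have dimension $<bm$, so $\overline{\pi(V_i\cap V_j)}$ is a proper closed subset of $\AA^{bm}_\CC$. For sufficiently general $\vec a$ (outside $\tilde Z$, outside $\bigcup_{i\ne j}\overline{\pi(V_i\cap V_j)}$, and in the dense open subset contained in each $\pi(V_i)$), the fiber $\pi^{-1}(\vec a)$ meets each $V_i$ in a nonempty subset and these intersections are pairwise disjoint. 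Hence $s\le|\pi^{-1}(\vec a)|\le d$, completing the proof.
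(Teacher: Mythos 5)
Your argument takes a genuinely different route from the paper's. The paper inducts on $m$, projecting one coordinate at a time and applying the inductive bound to each slice $Z\cap\{x_j=\tilde y\}$; you instead project all $m$ coordinates at once via Noether normalization and analyze the single map $\pi\colon X(b)\to\AA^{bm}_\CC$, extracting both conclusions in one pass. Your reduction to $X$ irreducible, the verification that $\tilde Z$ is a proper closed subvariety of $\AA^{bm}_\CC$ (which is a special case of Corollary~\ref{cor: existence of C points}), and the disjointness trick in the component count are all fine.

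There is, however, a gap. You only assert that $p\colon X\to\AA^m_K$ is \emph{generically} finite, and then deduce ``$\dim X(b)\le bm$'' from the finiteness of $\pi^{-1}(\vec a)$ for $\vec a\notin\tilde Z$. That inference is invalid: a morphism can have finite generic fibers and still have source strictly larger than its target if some irreducible component of the source maps entirely into the bad locus. (Compare the projection of $(\AA^1\times\{0\})\cup(\{0\}\times\AA^2)$ onto $\AA^1$: generic fibers are singletons but the source is $2$-dimensional.) Concretely, if an irreducible component $V$ of $X(b)$ satisfies $\pi(V)\subset\tilde Z$, nothing in your argument bounds $\dim V$; note that $\pi^{-1}(\tilde Z)=(X\cap p^{-1}(Z))(b)$, and bounding the dimension of \emph{that} is precisely an instance of the proposition in lower dimension, which would require setting up an induction you have not set up. The same gap recurs at ``Since generic fibers of $\pi$ are zero-dimensional, each $V_i$ maps dominantly'' --- again only true if you control fibers over $\tilde Z$.

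The fix is small. Noether normalization actually produces a \emph{finite} (not merely generically finite) morphism $p\colon X\to\AA^m_K$; the set of $\CC$-linear projections for which finiteness fails is a proper closed condition (the center of projection must avoid $\overline X_\infty$ in $H_\infty$), so Corollary~\ref{cor: existence of C points} guarantees a good $\CC$-linear choice. Once $p$ is finite, \emph{every} fiber of $\pi$ is finite, the dimension bound $\dim X(b)\le bm$ and the dominance of each $V_i$ are immediate, and $\tilde Z$ is only needed in the final paragraph to pick the ``sufficiently general $\vec a$'' with $|\pi^{-1}(\vec a)|\le d$. Alternatively, you could close the gap exactly as the paper does, by inducting on $m$ and using the inductive bound to control $\pi^{-1}(\tilde Z)$.
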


\begin{proof}
We may assume  $X$ has strictly positive dimension, as the result is trivial otherwise. 
Let $Z$ be an irreducible component of $X$. Choose a coordinate $x_j$ such that $Z$ is not contained in $\{x_j=\lambda\}$ for any $\lambda \in K$, which is possible as $Z$ is irreducible of positive dimension. Consider the map $\pi_j: Z\to \AA^1$ which projects onto the $j$-th coordinate. 
For every $y\in K$, the fibre $\pi_j^{-1}(y)$ is a variety of dimension  $m-1$ and of degree at most $\deg Z$, if it is not empty. 
For each $b$, we get map $\pi_{j,b}:Z(b) \to \AA^1_K(b)\simeq \AA^b_\CC$, where $\pi_{j,b}^{-1}(y)\simeq Z_{\tilde{y}}(b)$ for a suitable $\tilde{y}\in K$ of height smaller than $b$.  
By induction, 
for each $y \in \CC^b$ 
we get $\dim \pi_{j,b}^{-1}(y) \le (m-1)b$ with at most $\deg(Z)$ irreducible components of dimension $(m-1)b$. The claim now follows as 
\[
\Naff(Z,b) \le b+(m-1)b=mb,
\] 
and as the number of irreducible components of $Z(b)$ of dimension $mb$ is bounded by $\deg Z$ (note that
 $\pi_{j,b}$ is dominant when restricted to each irreducible component of dimension $mb$ of $Z(b)$ by dimension considerations).
\end{proof}

Note that Proposition~\ref{prop: affine geometric SZ}  assumes no irreducibility of $X$, and hence the estimate is tight when $X$ is a union of linear spaces defined over $\CC$. 
Moreover, this proposition also holds when $X$ is not defined over $K$.

Given a projective variety $X \subset \PP_{K}^n$, we get the following  projective version by applying Proposition~\ref{prop: affine geometric SZ} to the affine cone of $X$. 

\begin{corollary}\label{cor: projective geometric SZ}
Let $X \subset \PP^n_{K}$ be a projective variety of pure dimension $m$ and degree $d$. 
Then for every $b \ge 1$
we have 
\[
N(X,b) \le (m+1)b-1,
\]
and the number of irreducible components of $X(b)$ of dimension $(m+1)b-1$ is at most $d$.
\end{corollary}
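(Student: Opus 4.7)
The plan is to deduce this projective statement from the affine Schwartz--Zippel bound (Proposition~\ref{prop: affine geometric SZ}) by passing to the affine cone. Let $CX \subset \AA^{n+1}_K$ denote the affine cone over $X$; since $X$ has pure dimension $m$ and degree $d$, the cone $CX$ is a closed affine subvariety of pure dimension $m+1$ and degree $d$. Building on the construction in Lemma~\ref{lem:Xb.is.variety}(\ref{it:proj.Xb}), one identifies $X(b)$ with the quotient $V/\CC^\times$, where $V \subset CX(b)$ is the open subvariety consisting of nonzero tuples $(a_0,\ldots,a_n)$ whose entries are coprime in $\cO_K$. The $\CC^\times$-scaling action on $V$ is free, so this quotient is geometric and drops dimension by exactly one.

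Applying Proposition~\ref{prop: affine geometric SZ} to $CX$ with parameter $b$ yields
\[
\dim CX(b) \le (m+1)b,
\]
with at most $d$ irreducible components attaining this dimension. Since $V$ is open in $CX(b)$ and $X(b) = V/\CC^\times$, this immediately gives
\[
N(X,b) = \dim X(b) = \dim V - 1 \le \dim CX(b) - 1 \le (m+1)b - 1.
\]

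For the component count, any irreducible component of $X(b)$ of dimension $(m+1)b - 1$ is the $\CC^\times$-quotient of an irreducible $\CC^\times$-stable component of $V$ of dimension $(m+1)b$, which in turn is the intersection with $V$ of an irreducible component of $CX(b)$ of dimension $(m+1)b$. Distinct top-dimensional components of $X(b)$ therefore come from distinct top-dimensional components of $CX(b)$, of which there are at most $d$. The only point that requires care, though standard, is verifying that each top-dimensional component of $X(b)$ really lifts to an honest top-dimensional (and not merely lower-dimensional) component of $CX(b)$ meeting the coprime locus $V$; this follows since $V$ is open and nonempty in every such component (any homogeneous tuple can be rescaled to become coprime without increasing height).
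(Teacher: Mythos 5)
Your proof is correct and follows the same route as the paper: pass to the affine cone and apply Proposition~\ref{prop: affine geometric SZ}. The paper simply states $N(X,b)=\Naff(Y,b)-1$ and cites the affine proposition, whereas you spell out the $\CC^\times$-quotient argument and the lifting of top-dimensional components in more detail; the substance is the same.
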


\begin{proof}
Let $Y\subset \AA^{n+1}_K$ denote the affine cone of $X$. This is a variety of dimension $m+1$ and degree $d$, and hence we conclude by noting that
\[
N(X,b)= \Naff(Y,b)-1
\]
and using Proposition~\ref{prop: affine geometric SZ}. 
\end{proof}

We deduce the following about existence of $\CC$-points not lying on a given variety defined over $K$. 
This result  is frequently  used to show that 
given a generic condition over $K$, there already exists a $\CC$-point satisfying the condition.
See for example Proposition~\ref{prop: Bertini} or the proof of Lemma~\ref{lem:lines.aff.surface}.
This claim  is also given in~\cite[Corollary 5.1.2]{CCL-PW}.

\begin{corollary}
\label{cor: existence of C points}
Let $X \subset \AA^n_K$ be a quasi-affine variety with $\dim X < n$. Then there exists a point $y \in \CC^n$ not belonging to $X(1)$. An analogous statement holds in the projective case. 
\end{corollary}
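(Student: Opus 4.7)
The plan is to invoke the geometric Schwartz--Zippel bounds just established, with $b=1$. First I would apply Proposition~\ref{prop: affine geometric SZ}, after reducing to the pure-dimensional case by decomposing $X$ into its finitely many irreducible components (each of dimension at most $\dim X < n$) and applying the bound to each separately. This yields
\[
\dim X(1) \le (\dim X)\cdot 1 < n.
\]
The key observation is then that $X(1)$ is a subvariety of $\AA^n_\CC$ of dimension strictly less than $n$, so its Zariski closure in $\AA^n_\CC$ is a proper closed subvariety. Since $\CC$ is algebraically closed (in particular infinite), the set of $\CC$-points of $\AA^n_\CC$ equals $\CC^n$ and cannot be contained in any proper Zariski closed subset; this produces the desired $y \in \CC^n \setminus X(1)$.

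For the projective case the argument is essentially identical: one replaces Proposition~\ref{prop: affine geometric SZ} by Corollary~\ref{cor: projective geometric SZ}, which, again after decomposing into irreducible components, gives
\[
\dim X(1) \le (\dim X + 1)\cdot 1 - 1 = \dim X < n,
\]
so $X(1) \subsetneq \PP^n_\CC$ is a proper subvariety and its $\CC$-points form a proper closed subset of $\PP^n(\CC)$, which by infiniteness of $\CC$ cannot exhaust $\PP^n(\CC)$.

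There is no substantial obstacle here; the corollary is immediate from the Schwartz--Zippel bounds. The only minor point is that those bounds are stated for pure-dimensional varieties, but this is easily resolved by working component by component, as indicated above.
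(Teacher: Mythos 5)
Your argument is correct and matches the paper's own proof: both apply the geometric Schwartz--Zippel bound (Proposition~\ref{prop: affine geometric SZ}, resp.\ Corollary~\ref{cor: projective geometric SZ}) with $b=1$ to see that $X(1)$ has dimension $<n$ inside the $n$-dimensional space $\AA^n_K(1)\simeq\AA^n_\CC$ (resp.\ $\PP^n_\CC$), hence cannot exhaust it. Your extra remark about reducing to the pure-dimensional case by working component by component is a harmless bit of added care that the paper leaves implicit.
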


\begin{proof}
By Proposition~\ref{prop: affine geometric SZ}, $\dim X(1) \le \dim X$,
whereas $\dim (\AA^n_K(1))=n>\dim X$, and so there exists a point $y\in \AA^n_K(1)\setminus X(1)$.
\end{proof}

It is typically enough to deal with geometrically  irreducible varieties. 

\begin{lemma}\label{lem:irre.vs.abs.irre}
Let $X$ be an affine or projective variety of dimension $m$ over $K$ which is integral over $K$, but not geometrically integral. Then there exists a subvariety $Y\subset X$ of dimension $m-1$, and of degree at most $d^2$, such that for all integers $b \ge 1$ we have 
\[
X(b)\subset Y(b).
\]
In particular, if $X$ is projective then
\[
N(X,b)\leq mb-1,
\]
and the number of irreducible components of $X(b)$ of dimension $mb-1$ is at most $d^2$.
If $X$ is affine then
\[
\Naff(X,b)\leq (m-1)b,
\]
and the number of irreducible components of $X(b)$ of dimension $(m-1)b$ is at most $d^2$.
\end{lemma}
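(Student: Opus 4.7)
The plan is to find a $K$-subvariety $Y \subset X$ of dimension at most $m-1$ and degree at most $d^2$ which already contains all of $X(K)$, and then to feed $Y$ into the geometric Schwartz--Zippel bounds (Proposition~\ref{prop: affine geometric SZ} or Corollary~\ref{cor: projective geometric SZ}). The candidate for $Y$ will be built from the pairwise intersections of the geometrically irreducible components of $X_{\overline{K}}$.

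First, I would base change to $\overline{K}$ and decompose $X_{\overline{K}} = X_1 \cup \cdots \cup X_r$ into geometrically irreducible components. Since $\charac K = 0$ and $X$ is reduced and irreducible over $K$, the decomposition is reduced and the $X_i$ form a single transitive orbit of $\Gal(\overline{K}/K)$; in particular, all $X_i$ have the same dimension $m$ and the same degree $d/r$, where $r \ge 2$ by the hypothesis that $X$ is not geometrically integral.

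Next, I would set
\[
Y \;=\; \bigcup_{1 \le i < j \le r} (X_i \cap X_j),
\]
viewed as a closed subscheme of $X_{\overline{K}}$. It is Galois-invariant and therefore descends to a closed subvariety of $X$, which I again denote $Y$. Since distinct $m$-dimensional components cannot share an $m$-dimensional subvariety, $\dim Y \le m-1$. To bound the degree, I would apply Bezout to each pair (passing to projective closures in the affine case) to get $\deg(X_i \cap X_j) \le (d/r)^2$, and sum over all $\binom{r}{2}$ pairs:
\[
\deg Y \;\le\; \binom{r}{2}\left(\frac{d}{r}\right)^{\!2} \;=\; \frac{r-1}{2r}\, d^2 \;<\; d^2.
\]

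Any $K$-rational point of $X$ is fixed by $\Gal(\overline{K}/K)$, hence lies in every $X_i$, hence in each $X_i \cap X_j$, and therefore in $Y$. It follows that $X(b) \subseteq Y(b)$ for all $b \ge 1$. The stated dimension and component bounds are then immediate from Proposition~\ref{prop: affine geometric SZ} or Corollary~\ref{cor: projective geometric SZ}, applied to the pure-dimensional top part of $Y$. I expect the only delicate point to be the degree bookkeeping: using the full intersection $\bigcap_i X_i$ would give a Bezout bound like $(d/r)^r$, which exceeds $d^2$ once $r$ grows, so it is essential to restrict to pairwise intersections and sum their degrees over Galois orbits.
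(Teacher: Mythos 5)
Your proof is correct and follows essentially the same strategy as the paper, which simply defers to the classical argument of Heath-Brown (take two distinct geometrically irreducible components and intersect them). Your refinement of taking the Galois-stable union $\bigcup_{i<j}(X_i\cap X_j)$ rather than a single pairwise intersection is a sensible way to make $Y$ descend to $K$, so that $Y(b)$ is a well-defined $\CC$-variety, and the $\binom{r}{2}(d/r)^2 < d^2$ bookkeeping confirms this still fits within the stated degree budget.
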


\begin{proof}
This follows similarly to the argument below~\cite[Cor.\,1]{Heath-Brown-Ann}. 
The final part follows by applying Corollary~\ref{cor: projective geometric SZ} or Proposition~\ref{prop: affine geometric SZ} to $Y$.
\end{proof}

\subsection{Projections}

To reduce to the case of hypersurfaces in our main results we use projections.

\begin{proposition}
\label{prop: projection for affine varieties} 
Let $X \subset \AA^n_K$ be a geometrically integral affine variety of degree $d$ and dimension $m$. 
Then there exists a degree $d$ hypersurface $Z\subset \AA_K^{m+1}$ birational to $X$, and such that for every $b \ge 1$ the varieties $X(b)$ and $Z(b)$ are birational.
In particular
\[
N_{\aff}(X,b) = N_{\aff}(Z,b).
\]
\end{proposition}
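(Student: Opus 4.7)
My plan is to take $Z = \pi(X)$ for a suitably generic $\CC$-linear projection $\pi: \AA^n_K \to \AA^{m+1}_K$ and to argue that the induced morphism $\pi_b: X(b) \to Z(b)$ is birational.

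First, to construct $\pi$, let $\cW$ be the affine space parametrising linear maps $\AA^n \to \AA^{m+1}$. Over an algebraically closed field it is classical that a generic linear projection of a geometrically integral variety of dimension $m$ and degree $d$ restricts to a birational morphism onto its image, which is a hypersurface of the same degree $d$ in $\AA^{m+1}$. Hence the complement of the ``good'' locus in $\cW$ is a proper closed subvariety defined over $K$, and by Corollary~\ref{cor: existence of C points} it misses some $\CC$-point. This yields a $\CC$-linear projection $\pi$ and a degree-$d$ hypersurface $Z := \pi(X) \subset \AA^{m+1}_K$ birational to $X$.

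Second, since $\pi$ has $\CC$-coefficients, for $x \in \cO_K^n$ with $h(x) < b$ one has $h(\pi(x)) \leq h(x) < b$, so $\pi$ restricts to a morphism of $\CC$-varieties $\pi_b: X(b) \to Z(b)$. Letting $U \subset X$ be the dense open on which $\pi|_U: U \to V := \pi(U)$ is an isomorphism, $\pi_b$ is injective on the open $\{x \in X(b) : x \in U\} \subset X(b)$, hence generically injective. To finish proving birationality of $\pi_b$ I would show that it is also dominant, by analysing the inverse rational map $\psi: V \to U$. In coordinates where $\pi$ is projection onto the first $m+1$ entries, $\psi(z) = (z_1, \ldots, z_{m+1}, \phi_{m+2}(z), \ldots, \phi_n(z))$ with $\phi_j \in K(Z)$, and the goal is to show that on a dense open of $Z(b)$ the values $\phi_j(z)$ lie in $\cO_K$ and have degree $<b$, producing a rational inverse $Z(b) \dashrightarrow X(b)$.

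The hard part will be establishing this last density claim. While $\pi$ manifestly does not increase height, the rational functions $\phi_j = P_j/Q_j$ evaluated at $z \in Z(b)$ need not lie in $\cO_K$, and even when they do their degree might exceed $b$. I would address this via a fibre-dimension argument combined with generic injectivity of $\pi_b$: components of $Z(b)$ contained in the vanishing locus of some $Q_j$ satisfy additional polynomial constraints on the coefficients and can therefore be controlled in dimension via the geometric Schwartz--Zippel bound of Proposition~\ref{prop: affine geometric SZ}, so the top-dimensional components of $Z(b)$ must all be dominated by $\pi_b$. Once $\pi_b$ is shown to be birational, the equality $N_\aff(X,b) = N_\aff(Z,b)$ follows from birational invariance of dimension.
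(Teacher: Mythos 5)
Your high-level plan — project generically from $\CC$-points and use height non-increase to get $\pi_b: X(b)\to Z(b)$ — is the same idea as the paper's, and you are right to single out the dominance of $\pi_b$ as the nontrivial step (the paper's own wording on this point is quite terse). However, your proposed way of closing that gap does not work. You propose to control, via Schwartz--Zippel, the components of $Z(b)$ landing in the vanishing locus of the denominators $Q_j$ of the rational inverse. But the obstruction to dominance is not (only) that $\phi_j(z)$ might leave $\cO_K$: even when the inverse has no denominators at all, its entries can have degree $\ge b$. Concretely, take $X=\{(x,x^2,t^{100}x^2):x\in K\}\subset\AA^3_K$ and the $\CC$-linear projection $(x,y,z)\mapsto(x,y)$ onto $Z=\{y=x^2\}$; the inverse is the polynomial map $(u,v)\mapsto(u,v,t^{100}v)$, yet $\dim Z(b)\approx b/2$ while $\dim X(b)\approx (b-100)/2$. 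So $\pi_b$ is far from dominant for this projection, and no analysis of denominator loci detects the problem. Your argument needs a genuinely different ingredient to rule out this kind of degree loss.

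The paper avoids this by working much more carefully through the projective closure. The projection is built in Lemma~\ref{lem: projection for proj varieties} as a composition of projections from $\CC$-points $p$ lying \emph{on the hyperplane at infinity} $L_\infty$ and \emph{off every coordinate hyperplane} (so $p$ has no zero coordinates apart from the forced $p_0=0$), and with the compatibility $x\in L_\infty \iff \pi(x)\in L_\infty$. These constraints ensure the dropped coordinate feeds into every retained coordinate with a nonzero $\CC$-coefficient, which is precisely what prevents the degree collapse in the example above (note that $[0:0:0:1]$, which produced the bad projection, is excluded by the condition $p\notin L_1\cup\dots\cup L_n$). Your proposal omits the projective-closure construction entirely and therefore has no mechanism to impose these constraints; this is the substantive gap. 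It is also worth noting that for the paper's downstream applications only the inequality $N_\aff(X,b)\le N_\aff(Z,b)$ is used, and that direction follows immediately from generic injectivity of $\pi_b$ without any dominance argument; but as a proof of the proposition as stated, your write-up is incomplete.
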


\begin{lemma}
\label{lem: projection for proj varieties}
Let $X \subset \PP^n_K$ be a geometrically integral  variety of dimension $m$ and degree $d\ge 2$ and let $L \subset \PP^n_K$ be a hyperplane defined over $\CC$ not containing $X$. 
Then there exists an $(m+1)$-dimensional coordinate space $\Gamma =\{ x_{i_1}=\ldots =x_{i_{n-m-1}}=0  \} \subset \PP^n_K$ and a birational projection map $\pi: X \to Y $ to a degree $d$ geometrically integral hypersurface $Y \subset \Gamma$, such that for every $ x\in X(K)$ we have,
\[
h(\pi(x)) \le h(x).
\]
In addition, the map $\pi$ can be chosen such that  for every $x \in X$, 
 \[
	x \in L \iff \pi(x) \in L.
 \]  
\end{lemma}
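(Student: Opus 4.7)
The strategy is first to normalize $L$ to $\{x_0=0\}$ via a $\CC$-linear change of coordinates, and then to find a second $\CC$-linear change of coordinates (preserving $L$) after which the coordinate projection onto $(x_0,\ldots,x_{m+1})$ is birational onto a hypersurface of degree $d$. Since $L$ is defined over $\CC$, the first reduction is immediate; and any $\CC$-linear change of coordinates preserves heights, since it replaces a coprime tuple $(a_0,\ldots,a_n)$ by a tuple of $\CC$-linear combinations whose maximal degree is bounded by $\max_i \deg a_i$ (after clearing any new common factor), with the reverse inequality obtained by applying the inverse change.

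Set $\Gamma = \{x_{m+2}=\ldots=x_n=0\}$, which has dimension $m+1$ and contains the coordinate $x_0$, and let $\Lambda = \{x_0 = \ldots = x_{m+1} = 0\}$ be the complementary coordinate subspace of dimension $n-m-2$, with associated coordinate projection $\pi_0 : \PP^n_K \dashrightarrow \Gamma$. I look for $S \in \GL_n(\CC)$ acting on $(x_1,\ldots,x_n)$ and fixing $x_0$ (so fixing $L$), with associated $\tilde S = \diag(1,S)\in\PGL_{n+1}(\CC)$, satisfying
(a) $\Lambda \cap \tilde S(X) = \emptyset$, and
(b) the restriction of $\pi_0$ to $\tilde S(X)$ is birational onto its image $Y\subset\Gamma$, which is a geometrically integral hypersurface of degree $d$.
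Both conditions are Zariski-open on $\GL_n$: (a) is generic since $\dim X + \dim \Lambda = n-2 < n$, and (b) is the classical fact that a generic linear projection of a geometrically integral variety of dimension $m$ and degree $d$ from an $(n-m-2)$-dimensional center is birational onto a degree-$d$ hypersurface in a complementary $\PP^{m+1}$, by the degree formula $\deg X = \deg(\pi_0|_{\tilde S(X)})\cdot\deg Y$.

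Hence the failure locus $W\subset\GL_n$ is a proper closed $K$-subvariety, and viewing $\GL_n \subset \AA^{n^2}$ the union $W \cup \{\det = 0\}$ is a closed $K$-subvariety of $\AA^{n^2}_K$ of dimension strictly less than $n^2$. By Corollary~\ref{cor: existence of C points} it misses a $\CC$-rational point, giving the desired $S \in \GL_n(\CC)$. Setting $\pi = \pi_0 \circ \tilde S : X \dashrightarrow Y$, the map $\pi$ is a $\CC$-linear projection, hence $h(\pi(x)) \le h(x)$ for all $x\in X(K)$; furthermore $x \in L \iff \pi(x)\in L$ because $\tilde S$ preserves $L=\{x_0=0\}$ and $\pi_0$ retains the coordinate $x_0$. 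The main obstacle is this last step: one must transfer the classical generic-projection statements, which only yield a \emph{generic} good linear change of coordinates, into the existence of a $\CC$-\emph{rational} change of coordinates, for which Corollary~\ref{cor: existence of C points} is the key tool.
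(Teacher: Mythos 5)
The proposal takes a genuinely different, non-inductive route: instead of iterating single-point projections from $\CC$-points of $L$ (as the paper does), it seeks one $\CC$-linear change of coordinates $\tilde S$ preserving $L$, after which a single coordinate projection onto a fixed $\Gamma$ is birational, and finds such an $\tilde S$ over $\CC$ via Corollary~\ref{cor: existence of C points}. Both arguments pivot on the same mechanism---genericity over $K$ plus Corollary~\ref{cor: existence of C points} yields a $\CC$-rational witness---and the all-at-once version is in principle cleaner.

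There is, however, a genuine gap in justifying nonemptiness of condition (b). Restricting to $S\in\GL_n$ fixing $x_0$ forces the effective projection center $\tilde S^{-1}(\Lambda)$ to lie in $L$, so you are really parameterizing centers in $\Gr(n-m-2,L)$, not in $\Gr(n-m-2,\PP^n)$. You invoke ``the classical fact'' that a generic $(n-m-2)$-dimensional center gives a birational projection, but that fact is stated for a generic center in all of $\PP^n$; it does not automatically survive the constraint to $L$. One must separately check that the constrained family contains a good center, and this is exactly where the paper's argument supplies the missing ingredient: fix $q\in X\setminus L$ (possible since $L\not\supset X$), observe that the cone $C_{X,q}$ is irreducible of dimension $\le m+1$ and not contained in $L$ because $q\notin L$, so $C_{X,q}\cap L$ has dimension $\le m$ and a generic $(n-m-2)$-subspace of $L$ misses it, which gives birationality. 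Your count ``$\dim X+\dim\Lambda=n-2<n$'' for (a) is likewise done in $\PP^n$ rather than in $L$ (the right count is $(m-1)+(n-m-2)=n-3<n-1$), and it happens to give the right answer there; but the analogous constrained count for (b) is where the $q\notin L$ observation is essential and is not made. A second, more minor deviation: after normalizing $L$ to $\{x_0=0\}$, your $\Gamma$ is a coordinate space only in the \emph{new} coordinates, whereas the lemma as stated (and the paper's iterated projection onto genuine coordinate hyperplanes) produces a coordinate space in the original coordinates; this is harmless in the paper's applications, where $L$ is already a coordinate hyperplane, but is a mismatch with the literal statement.
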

\begin{proof}
We prove the claim by induction on $n$. 
If $X$ is a hypersurface there is nothing to prove, so assume $m \le n-2$. 
For a choice of a generic point $p \notin X$, the projection of $X$ from $p$ to any hyperplane $\Gamma \subset \PP^n_K$ not containing $p$ is birational to its image,  which is a degree $d$ projective variety (see e.g.~\cite[Page 8]{BHB06} or \cite[Example 18.16]{Harris}).

More precisely, by \cite[Page 224]{Harris}, if we fix any $q$ on $X \backslash L$ then it is enough to choose $p$ not lying on the cone over $X$ with vertex $q$, which we denote by $C_{X,q}$. 
Note that $C_{X,q}$  does not contain $L$, 
as it is irreducible of dimension at most $n-1$, and contains $q \notin L$. 
Choose any $q \in X$, set $L_i=\{ x_i=0\}$ and let $U$ be the complement of $C_{X,q} \cup L_0 \cup \ldots \cup L_n$ in $L$ (if $L$ is a coordinate hyperplane, exclude $L$ from the union $L_0 \cup \ldots \cup L_n$ above). 
Then $U$ is an open set in $L$, and by Corollary \ref{cor: existence of C points} and the above, we may find a  point $p \in U$ defined over $\CC$ and 
 a coordinate hyperplane $L_i$ for some $1 \le i \le n$, 
 to obtain the desired projection $\pi: X \to Y \subset L_i$. 
Without loss of generality, assume $i=n$ and set $\Gamma=L_n$. 
Given $x \in X$, the image $\pi(x)$ is the intersection of $\Gamma$ with the line connecting $p$ and $x$ and is given by the formula
\begin{equation}
\label{eq: projection equation}
\pi(x)
=\left(x_0-\frac{p_0}{p_n}x_n,\ldots, x_{n-1}-\frac{p_{n-1}}{p_n}x_n,0\right).
\end{equation}
Since $p$ is defined over $\CC$, 
 the inequality $h(\pi(x)) \le h(x)$ therefore holds.
Finally, note that since $p \in L$, 
and $L$ contains any line between two points in $L$, 
we get 
\[\pi(x) \in L \iff x \in L.
\]
  We conclude by induction, as $Y \subset \Gamma$ and $\dim \Gamma =n-1$.
\end{proof}
\begin{proof}[Proof of Proposition \ref{prop: projection for affine varieties}] 
Consider the projective closure $\overline{X}$ of $X$ in $\PP^n_K =\AA^n_K \cup L_\infty$,
 where $L_\infty =\{ x_0=0\}$ is the hyperplane at infinity. By Lemma \ref{lem: projection for proj varieties}, there exists a coordinate linear space $\Gamma \subset \PP^n_K$ of dimension $m+1$ and a regular birational map 
$\pi : \overline{X}\to Y $ to a degree $d$ hypersurface $Y\subset \Gamma$ which does not increase the height of each point $x \in \overline{X}(K)$. 
In addition, by the lemma we may choose $\pi$ such that $\pi(X)$ 
does not intersect the plane at infinity  $L_\infty=\{ x_0=0\}$ in $\PP^n_K$. 
Without loss of generality, assume $\Gamma =\{ x_n=0\}$. 
Set $Z:=\overline{\pi(X)}$. Then $Z$ is the required affine hypersurface. 
By Equation~\eqref{eq: projection equation}, since we project from points defined over $\CC$, 
for each $b$ we get a birational projection map $\pi_b : X(b) \to Z(b)$. 
In particular, $N_{\aff}(X,b)= N_{\aff}(Z,b)$. 
\end{proof}

\subsection{Linear spaces}

In this section we give a precise count on points of bounded height on linear spaces, which will be used when counting on lines on a surface. We give a proof using geometry of numbers in function fields, although one can also prove this result via elementary means.

We denote by $\Gr(m,n)$ the Grassmannian of $m$-dimensional vector subspaces of $K^n$. For $L\in \Gr(m,n)$, we denote by $h(L)$ the height of the point $L$ under the Pl\"ucker embedding $\Gr(m,n)\to \PP^N$.
This quantity is also often called the determinant of $L$.

\begin{lemma}\label{lem:linear.spaces}
Let $L\in \Gr(m,n)$. Then for $b\geq h(L)$ we have that
\[
\Naff(L,b) = bm - h(L).
\]
Moreover, for $b> h(L)/m$ we have that $L(b)\neq \varnothing$.
\end{lemma}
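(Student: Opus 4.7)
The plan is to identify $L(b)$ explicitly with an affine $\CC$-space whose dimension can be read off from a well-chosen basis. Since $\cO_K = \CC[t]$ is a principal ideal domain and $L \cap \cO_K^n$ is a torsion-free $\cO_K$-submodule of $\cO_K^n$, it is free of rank $m$. First I would produce a \emph{reduced} basis $v_1, \ldots, v_m$ of $L \cap \cO_K^n$, that is, an $\cO_K$-basis having the non-cancellation property
\[
h\Bigl(\sum_{i=1}^m c_i v_i\Bigr) = \max_{1 \le i \le m}(\deg c_i + d_i) \qquad \text{for all } c_i \in \cO_K,
\]
where $d_i := h(v_i)$. Such a reduced basis exists by the function-field analogue of Minkowski reduction, which is in fact an equality on $\PP^1_\CC$ by Riemann--Roch; one can produce it iteratively by replacing each basis vector by a representative of smallest height modulo the previously chosen vectors.

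With the degrees ordered so that $d_1 \le \cdots \le d_m$, the next step is to identify the Pl\"ucker height. Since $L \cap \cO_K^n$ is a saturated $\cO_K$-submodule of $\cO_K^n$, the maximal minors of the matrix $[v_1 \mid \cdots \mid v_m]$ are coprime, so $h(L)$ equals the largest degree occurring among these minors. A direct inspection using the reducedness of the basis then yields
\[
h(L) = \sum_{i=1}^m d_i,
\]
the maximum being realised on the rows where the leading terms of the $v_i$ are supported.

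Now I would count $L(b)$. Any integral element $v \in L \cap \cO_K^n$ is uniquely $v = \sum_i c_i v_i$ with $c_i \in \cO_K$, and the non-cancellation property translates the condition $h(v) < b$ into $\deg c_i < b - d_i$ for every $i$. Since $b \ge h(L) \ge d_m$, each $b - d_i$ is non-negative; under the structure of Lemma~\ref{lem:Xb.is.variety} this parametrises $L(b)$ by $\prod_{i=1}^m \CC^{\,b - d_i}$, of dimension
\[
\sum_{i=1}^m (b - d_i) = mb - h(L),
\]
as required. For the non-emptiness statement, the ordering forces $d_1 \le (d_1 + \cdots + d_m)/m = h(L)/m < b$, and therefore $v_1$ itself lies in $L(b)$.

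\textbf{Main obstacle.} The principal technical point is the existence of a reduced basis together with the resulting identity $h(L) = \sum_i d_i$ relating the Pl\"ucker height to the degrees of the basis vectors; this is precisely where the specific geometry of $\CC(t)$, namely the genus-zero nature of $\PP^1_\CC$ that makes the Minkowski-type bound sharp, enters in a crucial way. Once these lattice-theoretic facts over $\cO_K$ are isolated, the dimension count and the non-emptiness statement follow by direct bookkeeping.
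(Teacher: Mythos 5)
Your proof is correct and follows essentially the same route as the paper: take a reduced basis of $L\cap\cO_K^n$ realising the successive minima, use the non-cancellation property to turn $h(v)<b$ into coordinate-wise degree bounds $\deg c_i < b - d_i$, and invoke $h(L)=\sum_i d_i$ to read off the dimension and the non-emptiness. The paper simply cites Hess's Lemma~4.1 (and Paulus) for the existence of such a basis together with the two key identities, whereas you sketch their derivations; the counting step and the non-emptiness argument via $d_1 \le h(L)/m < b$ are identical.
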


\begin{proof}
Let $v_1, \ldots, v_m\in \cO_K^n$ be a reduced basis of $L$, i.e.~ordered with increasing height such that each  $v_i$ realizes  the successive minimum $s_i = h(v_i)$ of $L$ (see \cite[Lemma 4.1]{Hess} or \cite{Paulus}). 
Then it follows from~\cite[Lem.\,4.1]{Hess} that 
\begin{enumerate}
\item for $\lambda_i\in \cO_K$ we have $h(\sum_{i=1}^m \lambda_i v_i) = \max\limits_{i: \lambda_i \neq 0} \left(s_i + h(\lambda_i)\right)$, and
\item $h(L)= s_1 + \ldots + s_m$.
\end{enumerate}
The first property shows that
\[
\Naff(L,b) = \sum_{i=1}^m \max\{b-s_i, 0\}.
\]
Hence for $b \geq h(L)\geq s_m$, we obtain that $\Naff(L,b) = bm - h(L)$. Moreover, for $b > h(L)/m \geq s_1$ we have that $\Naff(L,b) > 0$, as desired.
\end{proof}

\begin{lemma}\label{lem:kernel.linear.spaces}
Let $m<n$ and let $A\in K^{m\times n}$ be a matrix of full rank. 
Let $L\in \Gr(n-m, n)$ be the kernel of $A$. 
Consider $A$ as a point in $\Gr(m,n)$ 
by considering the rows of $A$ as vectors in $K^n$. 
We then have that
\[
h(L) = h(A).
\]
\end{lemma}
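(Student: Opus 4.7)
The plan is to prove this via Plücker duality, identifying the kernel of $A$ with the orthogonal complement (under the standard pairing on $K^n$) of the row span of $A$, and then observing that the Plücker coordinates of a subspace and its orthogonal complement agree up to a permutation and sign of coordinates.

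First I would recall that the Plücker embedding of $\Gr(m,n)$ sends a subspace $V$ spanned by the rows of an $m\times n$ matrix $A$ to the point of $\PP^{\binom{n}{m}-1}$ with homogeneous coordinates $p_I(V) = \det A_I$, where $A_I$ denotes the $m\times m$ submatrix with columns indexed by $I\subset\{1,\ldots,n\}$ of size $m$. By definition, $h(A)$ is the height of this projective point: clear denominators so that all $p_I(V)\in\cO_K$ are coprime, then take $\max_I \deg p_I(V)$. Since $A$ has full rank, the row span of $A$ is $m$-dimensional, and since $L=\ker A$ is cut out by the condition that the standard pairing with each row of $A$ vanishes, we have $L = V^\perp$, an $(n-m)$-dimensional subspace of $K^n$.

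Next I would invoke the classical Plücker/Hodge duality: the isomorphism $\wedge^m K^n \to \wedge^{n-m} K^n$ sending $e_I \mapsto \mathrm{sgn}(I,I^c)\, e_{I^c}$ identifies $\wedge^m V$ with $\wedge^{n-m} V^\perp$ up to a nonzero scalar in $K$. Consequently, the Plücker coordinates of $L = V^\perp$ in $\Gr(n-m, n)$ satisfy
\[
q_{I^c}(L) = \mathrm{sgn}(I,I^c)\, p_I(V)
\]
for all $I$ of size $m$, up to a common scalar in $K^\times$. One can verify this concretely by fixing an $I$ with $\det A_I\neq 0$, writing down an explicit $(n-m)\times n$ matrix $B$ whose rows span $L$ via the Schur-complement construction $B=[-A_2^\top A_1^{-\top},\ I_{n-m}]$ (after permuting columns so that $I=\{1,\ldots,m\}$), and computing the maximal minors of $B$ via Cramer's rule.

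Finally, since the two lists $(p_I(V))_I$ and $(q_{I^c}(L))_I$ agree as projective points of $\PP^{\binom{n}{m}-1}$ up to a permutation of coordinates and some signs, their coprime integral representatives in $\cO_K$ are the same up to permutation and sign. The height, defined as $\max \deg$ of the coprime representatives, is invariant under such a change, yielding $h(L)=h(A)$. The main subtlety is just to correctly identify $\ker A$ with $V^\perp$ and to set up the duality isomorphism with the right signs; once this is done, everything is formal.
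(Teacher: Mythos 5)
Your argument is correct and, at its core, relies on the same fact the paper uses: the Plücker coordinates of the row span $V$ of $A$ and of its orthogonal complement $L=V^\perp=\ker A$ coincide up to the permutation $I\leftrightarrow I^c$ and signs, so the (permutation- and sign-invariant) height is unchanged. The paper simply performs the reduction you sketch at the end — permute columns so that $I=\{1,\dots,m\}$, Gaussian-eliminate to $A=(I_m\mid B)$, and read off a kernel basis as the columns of $\bigl(\begin{smallmatrix}-B\\ I_{n-m}\end{smallmatrix}\bigr)$ — while you package the same computation as an invocation of the classical Plücker/Hodge duality for annihilators; both establish that the two lists of maximal minors agree up to sign and reindexing, from which $h(L)=h(A)$ follows.
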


\begin{proof}
After permuting the coordinates and Gaussian elimination, we may assume that $A$ is of the form
\[
A = \begin{pmatrix} I_m & \mid B\end{pmatrix}.
\]
The claim now follows as the columns of the following matrix form a basis of $L$
\[
\begin{pmatrix}
-B^T \\
I_{m}
\end{pmatrix}.\qedhere
\]
\end{proof}

We will also need an analogue of the Bombieri--Vaaler theorem~\cite{Bombi-Vaal} for function fields, which follows from the above results. 

\begin{lemma}[Bombieri--Vaaler]\label{lem:BV}
Let $A\in \cO_K^{m\times n}$ be a matrix with $m<n$. Assume that $A$ has full rank. Then there exists a non-zero $x\in \cO_K^n$ with $Ax = 0$ and 
\[
h(x) \leq \frac{h( \det(A\overline{A}^T) )/2 - h(D)}{n-m},
\]
where $D$ is the greatest common divisor of all $m\times m$ minors of $A$. 
\end{lemma}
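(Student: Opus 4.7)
I would reduce Bombieri--Vaaler to the function-field Minkowski bound already encoded in Lemma~\ref{lem:linear.spaces}, applied to the kernel of $A$.

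Set $L := \ker(A) \subset K^n$. Since $A$ has full rank $m$ and $m<n$, the subspace $L$ has dimension $n-m$. By Lemma~\ref{lem:kernel.linear.spaces}, $h(L) = h(A)$, where $A$ is regarded as a point of $\Gr(m,n)$ via its rows. The Plücker coordinates of $A$ are exactly its $m\times m$ minors $\det_I(A)$; factoring out their gcd $D$ yields coprime homogeneous coordinates, and hence
\[
h(L) \;=\; h(A) \;=\; \max_I \deg \det_I(A) \;-\; h(D).
\]

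Next, apply the second part of Lemma~\ref{lem:linear.spaces} to $L$: the set $L(b)$ is non-empty for every integer $b > h(L)/(n-m)$. Choosing $b$ minimal produces a non-zero $x \in \cO_K^n$ with $Ax=0$ and
\[
h(x) \;\le\; \frac{h(L)}{n-m} \;=\; \frac{\max_I \deg \det_I(A) - h(D)}{n-m}.
\]

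The stated bound now follows by identifying $\max_I \deg \det_I(A)$ with $h(\det(A\overline A^T))/2$. By the Cauchy--Binet formula,
\[
\det(A\overline A^T) \;=\; \sum_{|I|=m} \det_I(A)\,\det_I(\overline A),
\]
and for the chosen meaning of $\overline{\,\cdot\,}$ the degree of the right-hand side is exactly $2\max_I \deg \det_I(A)$, so $h(\det(A\overline A^T))/2 = \max_I \deg \det_I(A)$ and the lemma follows.

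\textbf{Main obstacle.} The delicate point is this last degree identity. One cannot naively set $\overline A = A$ and use $\det(AA^T) = \sum_I \det_I(A)^2$, since squares of non-zero polynomials in $\CC[t]$ can cancel (for instance $1^2 + i^2 = 0$), which could drop the degree below $2\max_I \deg \det_I(A)$. The role of $\overline A$ is precisely to provide a ``conjugation'' that prevents cancellation of leading terms in the Cauchy--Binet expansion; verifying this non-cancellation, and thus the exact degree identity $h(\det(A\overline A^T)) = 2\max_I \deg \det_I(A)$, is the only substantive work beyond Lemmas~\ref{lem:linear.spaces} and~\ref{lem:kernel.linear.spaces}.
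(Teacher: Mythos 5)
Your proof follows essentially the same route as the paper: apply Lemma~\ref{lem:linear.spaces} to $L=\ker(A)$, identify $h(L)=h(A)$ via Lemma~\ref{lem:kernel.linear.spaces}, and rewrite $h(A)$ as $h(\det(A\overline{A}^T))/2 - h(D)$. The ``main obstacle'' you flag is resolved exactly as you suggest: $\overline{\,\cdot\,}$ is coefficient-wise complex conjugation, so each Cauchy--Binet summand $\det_I(A)\,\overline{\det_I(A)}$ has a strictly positive real leading coefficient, and the leading terms at the maximal degree $2\max_I\deg\det_I(A)$ cannot cancel — a step the paper leaves implicit, so making it explicit is a useful addition.
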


\begin{proof}
Let $L\in \Gr(n-m, n)$ be the linear space corresponding to the kernel of $A$. Then Lemma~\ref{lem:linear.spaces} shows that there exists a nonzero point $x\in \cO_K^r$ with $h(x)\leq h(L)/(n-m)$. The result then follows from~Lemma~\ref{lem:kernel.linear.spaces} after noting that
\[
h(L) = h(A) = h( \det(A\overline{A}^T) )/2 - h(D). \qedhere
\]
\end{proof}

\subsection{Points on curves over function fields}

To prove Theorem \ref{thm:main.siegel}, we use induction on the dimension. The base for the induction is 
Proposition~\ref{prop:curve.siegel} about sparseness of $\cO_K$-points lying on curves. 
For this aim, we need an analogue of Siegel's theorem over function fields, which we recall from the literature below.
The following is a result due to Mason.

\begin{theorem}[{\cite[Theorems 8, 11]{Mason_1984}}]
\label{thm:Mason}
Let $C \subset \AA^2_K$ be a geometrically irreducible curve.  
\begin{enumerate}
\item Assume that $C$ has genus $0$ and at least three distinct points at infinity. Then $C$ has infinitely many $\cO_K$-points  if and only if there exists 
a rational parametrization 
$\psi: \AA^1_K \to C$ of the form
\[
\psi(x)=\left(\frac{a(x)}{c(x)}, \frac{b(x)}{c(x)}\right)
\]
where $a,b \in \cO_K[x], c \in \CC[x]$ have no common factors. Furthermore, if such a $\psi$ exists, then there are only finitely many $\cO_K$-points on $C$ which are not the image under $\psi$ of a $K$-point on $\AA^1_K$.
\item Assume that $C$ has genus $1$. 
Then $C$ has infinitely many $\cO_K$-points if and only if there exists an elliptic curve $D\subset \AA^2_K$ defined by $y^2 = j(x)$ for a square-free monic polynomial $j\in \CC[x]$ and a parametrization $\psi: D\to C$ of the form
\[
\psi(x,y) = \left(\frac{a(x) + yb(x)}{c(x)}, \frac{d(x) + ye(x)}{g(x)}\right),
\]
where $a,b,d,e\in \cO_K[x]$ and $c,g\in \CC[x]$.
Furthermore, in this case the $\cO_K$-points of $C$ consist of the infinite family arising  by  taking $x \in \CC$, and finitely many other $\cO_K$-points.
\end{enumerate}
\end{theorem}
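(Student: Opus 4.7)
The plan is to handle parts (1) and (2) separately, with Mason--Stothers' $abc$-theorem for function fields (combined with the $S$-unit equation machinery) as the main tool in part (1), and the Lang--N\'eron theorem together with isotriviality considerations as the main tool in part (2).

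For part (1), I use that $K=\CC(t)$ is a $C_1$-field by Tsen's theorem, so any smooth projective genus $0$ curve over $K$ is isomorphic to $\PP^1_K$. Fix a birational parametrization $\psi:\PP^1_K\dashrightarrow C\subset\AA^2_K$ and write $\psi(x)=(a(x)/c(x),b(x)/c(x))$ with $a,b,c\in\cO_K[x]$ having no common factor. The ``if'' direction is easy: when $c\in\CC[x]$, for every $x_0\in K$ the denominator $c(x_0)\in\CC$ is a unit in $\cO_K$ whenever nonzero, so $\psi(x_0)\in\cO_K^2$, producing an infinite family of integral points. For the converse, pass to a finite extension $L/K$ over which the roots of $c(x)$ are rational, fix three distinct roots $\alpha_1,\alpha_2,\alpha_3$ of $c(x)$ (possibly $\alpha_i=\infty$) coming from three distinct points at infinity of $C$, and note that integrality $\psi(x_0)\in\cO_K^2$ forces each $x_0-\alpha_i$ to be an $S$-unit in $\cO_L$ for a fixed finite set of places $S$. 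The cross-ratio
\[
\lambda(x_0)=\frac{(x_0-\alpha_1)(\alpha_2-\alpha_3)}{(x_0-\alpha_3)(\alpha_2-\alpha_1)}
\]
together with $1-\lambda(x_0)$ then satisfies a non-trivial $S$-unit equation. Mason--Stothers bounds the height of each such $\lambda(x_0)$, and combining this with the fact that $x_0$ ranges over an infinite family forces the non-constant roots of $c(x)$ to lie in $\CC$, yielding the stated form with $c\in\CC[x]$; the finiteness of exceptional integral points falls out of the same height bound.

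For part (2), an infinite supply of $\cO_K$-points yields in particular a $K$-point, so $C$ carries the structure of an elliptic curve $E/K$. The Lang--N\'eron theorem says that $E(K)/\Tr_{K/\CC}(E)(\CC)$ is finitely generated; combined with the function-field Siegel theorem for elliptic curves, the integral points in each coset of the $\CC$-trace are finite in number. Hence infinitely many integral points forces $\Tr_{K/\CC}(E)\neq 0$, that is, $E$ is isotrivial, with $E\simeq E_0\times_\CC K$ for some elliptic curve $E_0/\CC$. Writing $E_0:y^2=j(x)$ in Weierstrass form with $j\in\CC[x]$ square-free monic, and composing this identification with the embedding $C\hookrightarrow\AA^2_K$, produces the required parametrization $\psi$; the denominators $c,g$ end up in $\CC[x]$ because the $x$-coordinate of the isomorphism $E_0\times_\CC K\simeq E$ is defined over $\CC$, while the numerators pick up $K$-coefficients from the composition with the affine embedding of $C$. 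The finite family of exceptional integral points again follows from Lang--N\'eron.

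The main obstacle in part (1) is the uniform Mason--Stothers argument: one has to pass from a pointwise height bound controlling each individual $\lambda(x_0)$ to a family-level statement forcing the roots of $c(x)$ to lie in $\CC$ simultaneously. In part (2) the dichotomy ``finitely vs.\ infinitely many integral points'' is routine once Lang--N\'eron is available; the subtle part is producing the explicit stated form of $\psi$, which requires careful bookkeeping of how the isotriviality isomorphism $E_0\times_\CC K\simeq E$ interacts with the given embedding $C\hookrightarrow\AA^2_K$, possibly after absorbing a quadratic twist into the $y$-coordinate to ensure $j\in\CC[x]$.
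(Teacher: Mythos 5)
The paper does not prove this theorem: it is quoted verbatim from Mason's book (Theorems 8 and 11 of \cite{Mason_1984}) as a black-box input for Proposition~\ref{prop:curve.siegel}, and no argument is supplied. So there is no paper proof to compare against; what you have written is a de novo proof sketch. Your outline takes a more modern route than Mason's (he proves these results directly via his effective $abc$-type bounds, in an essentially self-contained and constructive way), using Tsen + cross-ratio + $S$-unit equation in part (1), and Lang--N\'eron + isotriviality in part (2). That is a legitimate alternative path, but the hard steps are precisely the ones you defer.

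Two concrete issues. First, in the ``if'' direction of part (1) you write that for every $x_0\in K$ the denominator $c(x_0)$ lies in $\CC$; this is false for nonconstant $c\in\CC[x]$ and $x_0\in K\setminus\CC$. The correct statement is that one restricts to $x_0\in\CC$, where $a(x_0),b(x_0)\in\cO_K$ and $c(x_0)\in\CC^\times$ for all but finitely many $x_0$; this already gives the infinite family. Second, and more seriously, in the converse direction of part (1) you state that Mason--Stothers ``forces the non-constant roots of $c(x)$ to lie in $\CC$'' but give no argument. What the $S$-unit equation actually yields is that for all but finitely many of the relevant $x_0$, the cross-ratio $\lambda(x_0)$ lies in $\CC^\times$ (the constant solutions of $u+v=1$). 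Getting from ``$\lambda(x_0)\in\CC$ for infinitely many $x_0$'' to the uniform conclusion ``$c\in\CC[x]$'' requires an additional step — in effect reparametrizing $\PP^1_K$ so that those infinitely many $x_0$ become $\CC$-points and then reading off the resulting constraints on the coefficients of $\psi$. This is the heart of Mason's Theorem 8, and it is not carried out in your sketch. In part (2), your dichotomy relies on Siegel's theorem for non-isotrivial elliptic curves over function fields, itself a substantial theorem of comparable depth to Mason's Theorem 11, and you acknowledge but do not supply the derivation of the explicit form of $\psi$ (in particular the twist bookkeeping ensuring $j\in\CC[x]$ square-free monic and $c,g\in\CC[x]$). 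As it stands, this is a plausible outline that identifies the right ingredients but leaves the load-bearing arguments unproven.
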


To take care of the case of genus at least $2$, we recall the resolution of Mordell's conjecture (Falting's theorem) over function fields. 
\begin{theorem}[Manin--Grauert, see \cite{Samuel66}]
\label{thm:Manin-Grauert}
Let $C \subset \AA^2_K$ be a geometrically irreducible curve of genus at least $2$ which is not defined over $\CC$. Then $C(\cO_K)$ is finite. 
\end{theorem}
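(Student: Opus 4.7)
The plan is to reduce the statement to the function-field Mordell conjecture of Manin--Grauert, which I would invoke as a black box. First, I would pass from the possibly singular affine curve $C$ to its smooth projective model $\tilde{C}$, obtained by normalizing the projective closure $\bar{C} \subset \PP^2_K$ of $C$. Since $C$ is geometrically irreducible of genus $\ge 2$ and not defined over $\CC$, the same holds for $\tilde{C}$, as the smooth projective model is a birational invariant. Because the normalization morphism $\tilde{C} \to \bar{C}$ is finite, the induced map $\tilde{C}(K) \to \bar{C}(K)$ has finite fibers, and we have inclusions $C(\cO_K) \subseteq C(K) \subseteq \bar{C}(K)$. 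Therefore it suffices to prove $\tilde{C}(K)$ is finite.

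Next, I would apply the Manin--Grauert theorem: a non-isotrivial smooth projective curve of genus $\ge 2$ over $\CC(t)$ has only finitely many $K$-points. A detailed account, following both Manin's approach (via the Kodaira--Spencer/Gauss--Manin connection, producing a canonical height function on $\tilde{C}(K)$ with bounded level sets) and Grauert's hyperbolic/complex-analytic approach, is given in \cite{Samuel66}. Since $\tilde{C}$ is not defined over $\CC$, one checks that it is in particular non-isotrivial in the sense used by Manin--Grauert (that $\tilde{C}$ does not become isomorphic over any finite $K'/K$ to a constant curve $C_0 \times_\CC K'$), so the theorem applies and $\tilde{C}(K)$, and hence $C(\cO_K)$, is finite.

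The main obstacle is of course the invocation of Manin--Grauert itself, which is a deep classical theorem. The only technical subtlety on our side is bridging `not defined over $\CC$' with `non-isotrivial' for smooth projective curves of genus $\ge 2$; this follows from a standard Weil descent argument using the finiteness of $\Aut(\tilde{C})$ for curves of genus $\ge 2$, which rules out isotrivial twists that are not themselves defined over $\CC$. Modulo this routine clarification, the proof is a direct citation of \cite{Samuel66}.
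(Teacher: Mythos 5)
Your reduction to the smooth projective model $\tilde{C}$ and the appeal to \cite{Samuel66} is, structurally, the same move the paper makes: the paper records Theorem~\ref{thm:Manin-Grauert} as a known result with a citation and gives no independent proof. The genuine gap in your writeup is the claimed ``routine clarification'' that \emph{not defined over $\CC$} implies \emph{non-isotrivial} for genus $\ge 2$ curves. This implication is false, and finiteness of $\Aut(\tilde C)$ does not rule out the problematic twists---it classifies them. Weil descent identifies the curves over $K$ that become isomorphic to a constant curve $C_0\times_\CC K'$ over a finite Galois extension $K'/K$ with the pointed set $H^1(\Gal(K'/K),\Aut(C_0))$; since $\Gal(K'/K)$ can be cyclic of any order and $\Aut(C_0)$ is non-trivial (e.g.\ for hyperelliptic $C_0$), this set is in general non-trivial, and the corresponding twists are isotrivial curves over $K$ that are not $K$-isomorphic to any constant curve. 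So you cannot simply pass to the ``non-isotrivial'' form of Manin--Grauert.

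The statement remains true in the isotrivial, non-constant case, but by a different and more elementary argument that you would need to supply: over $K'$ one has $\tilde C_{K'}\cong C_0\times_\CC K'$, so $\tilde C(K)\hookrightarrow C_0(K')$; the theorem of de Franchis splits $C_0(K')$ into the constant points $C_0(\CC)$ together with a finite set; and the descent (cocycle) condition forces any constant point lying in $\tilde C(K)$ to be a fixed point of the non-trivial finite subgroup of $\Aut(C_0)$ generated by the cocycle, which has only finitely many fixed points on a curve of genus $\ge 2$. Alternatively, and more cleanly, \cite{Samuel66} already proves the full dichotomy---a smooth projective genus $\ge 2$ curve over $K$ has finitely many $K$-points unless it is $K$-isomorphic to a constant curve---so you may cite it directly under the hypothesis ``not $K$-isomorphic to a constant curve'' without detouring through non-isotriviality at all.
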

\subsection{Examples}
In this section we provide examples which demonstrate some noteworthy phenomena arising in our geometric version of dimension growth. 

Firstly, 
we show the upper bounds on dimension given in Theorems \ref{thm: affine curves} and \ref{thm: projective curves} are tight, where the curves attaining these bounds also attain analogous bounds in classical dimension growth. 
\begin{example}[{See \cite[Remark 5.1.5]{CCL-PW}}]
\label{ex: affine curve with many points}
Let $d \ge 1$ and let $C \subset \AA^2_K$ denote the affine  curve defined by $y=x^d$. Then for every integer $b \ge 1$ we have
\[
\Naff(C,b)= \left\lceil \frac{b}{d}\right\rceil,
\]
and the variety $C(b)$ is irreducible. Indeed, it is the image in $\AA^2_K(b)$ of the irreducible variety $\AA^1_K\left(\left\lceil \frac{b}{d}\right\rceil\right)$ under the map arising from $x \mapsto (x,x^d)$.
Note however that $C(b)$ is not reduced for every $b\geq 2$.
\end{example}

\begin{example}
\label{ex: proj curve with many points}
Let $d \ge 1$ and let $C \subset \PP^2_K$ denote the projective  curve defined by $yz^{d-1}=x^d$. Then for every integer $b \ge 1$ we have
\[
N(C,b)= 2\left\lceil \frac{b}{d}\right\rceil-1,
\]
and the variety $C(b)$ is irreducible.
\end{example}

In Theorems~\ref{thm: affine curves} and~\ref{thm: projective curves} we only count the number of irreducible components of maximal possible dimension. 
Accessing lower-dimensional irreducible components seems quite difficult to us, and one should not expect any good bound on their number, as the following example illustrates.

\begin{example}
For each $n \ge 1$, 
 let $X_n$ be the vanishing locus of the generalized Pell equation $y^2-(t^2+t+1)x^2=(t-1)(t-2)...(t-n)$ in $\AA^2_K$.
Let $L$ be the function field $K(\sqrt{t^2+t+1})$ and let $\phi: L\to L$ be the non-trivial automorphism of $L$ over $K$.

By Lemma \ref{le:pell}, $\dim X_n(b)=0$ for every $n, b$.
Hence each $X_n(b)$ consists of a finite number of points.
However, we claim that 
\[
\# X_n(b)\ge 2^{n}, \quad\text{ for }b=n+1.
\]
Indeed, it is easy to see that for each $i\in \CC$, the equation $y^2-(t^2+t+1) x^2=t-i$ has a solution $(x_i, y_i) \in \cO_K^2$ with $\deg x_i=0$, $\deg y_i=1$. 
Now, for each function $\sigma: \{1, ..., n\} \to \{1,-1\}$ we get an element
\[
y_\sigma+x_\sigma\sqrt{t^2+t+1} =  \prod_{i=1}^n (y_i+\sigma(i) x_i\sqrt{t^2+t+1} ) \in L
\]
for which $(x_\sigma, y_\sigma) \in X_n(n+1)$.

We claim that these $(x_\sigma, y_\sigma)$ are all different.
To see this, assume that $(x_\sigma, y_\sigma) = (x_\tau, y_\tau)$ for two functions $\sigma,\tau$.
Let $S\subseteq \{1, \ldots, n\}$ be the set of $j$ for which $\sigma(j) \neq \tau(j)$ and define
\[
\alpha_S = \prod_{i\in S}(y_i + \sigma(i) x_i\sqrt{t^2+t+1} )\in L.
\]
By assumption $\phi(\alpha_S) = \alpha_S$.
Therefore the norm of $\alpha_S$ is equal to $\alpha_S^2 = \prod_{i\in S}(t-i)$.
Each point $t-i$ for $i=1, \ldots, n$ is unramified in $L$, and so unique factorization into prime ideals implies that $S = \varnothing$.
We conclude that $\#X_n(n+1) \ge 2^{n}$, as desired.
\end{example}




Even if $X$ enjoys nice regularity properties, e.g.\,$X$ is a smooth variety, the scheme $X(b)$ may be very singular. In this respect, $X(b)$ is very different from the jet schemes of $X$ (see Remark~\ref{rem: comparison to jets}).

\begin{example}
\label{ex: non-reduced C(b)}
Let $C$ denote the curve in $\PP^2_{K}$ cut by $tx^2=yz$. Then $C(b)$ is non-reduced for every integer $b \ge 1$. 
\end{example}
\begin{proof}
Given $b \ge 1$, the scheme $C(b)$ is an open sub-scheme of the closed scheme $Y$ cut out in $\PP^{3b-1}_{\CC}$ by the equations obtained from the relation $tx^2=yz$. 
Setting $x= x_0 + \ldots +x_{b-1}t^{b-1}$, we see that the homogeneous ideal 
 $\cI_Y$ defining $Y$ contains the element $x_{b-1}^2$, but since it is generated by homogeneous degree $2$ polynomials, $ x_{b-1} \notin \cI_Y$. 
It follows that $C(b)$ is non-reduced. 
\end{proof}

Finally, given a variety $X$ and an integer $b\ge 1$, we are interested in the number of irreducible components of top dimension of $X(b)$. The next example shows that this number may strictly increase in $b$ even if $X$ is a smooth variety. 
\begin{example}
\label{ex: affine variety with many irred components}
Let $X$ denote the affine surface in $\AA^3_K$ cut out by the equation $ xy=z$ and let $b \ge 1$ be an integer. Then $X(b)$ has dimension $b+1$ and $b$ irreducible components of top dimension (note that $X$ does not satisfy the assumptions of Proposition \ref{prop:aff.surfaces}).
\end{example}
\begin{proof}
Consider the projection map $p_b:X(b) \to \AA^{b}_{\CC}$ defined by $p_b(x,y,z)= z$. If we fix $z\in \cO_K$ non-zero, then up to multiplication by $\CC^\times$ there are only finitely many $x,y\in \cO_K$ such that $xy=z$. Hence every fibre of $p_b$ not over $z=0$ is one-dimensional (and the fibre over $0$ is of dimension $b$).
We therefore conclude that $\dim X(b)=b+1$. 
Now note that for each $ 0 \le i \le b-1$, the subscheme 
\[
X_i:=\{ (x,y,z) \in X(b) : x_{j}=y_k=0, \, j > i, k>b-1-i\}
\]
 is irreducible of dimension $b+1$ since the projection $p_i : X_i \to \AA^{i+1} \times \AA^{b-i}$ by $p_i(x,y,z)=(x,y)$ is an isomorphism.
\end{proof}

\begin{remark}
\label{rem: comparison to jets}
Let $X\subset \AA^n_K$ be an affine variety defined over $\CC$.
Recall that the $\CC$-points of the $k$-th jet scheme $J_k(X)$ of a scheme $X$ are given by $X(\CC[t]/(t^{k+1}))$, see e.g.\,\cite{EM09} for more details.
The space $X(b)$ is therefore naturally a subvariety of the $(b-1)$-st jet scheme $J_{b-1}(X)$ of $X$. While $ J_{b-1}(X)$ reflects the singularities of the variety $X$, the variety $X(b)$ might be badly behaved and may have many irreducible components even if $X$ is smooth, as seen in Examples \ref{ex: non-reduced C(b)}, \ref{ex: affine variety with many irred components}. 
In contrast, if $X \subset \AA^n_K$ is an irreducible  local complete intersection  with at most rational singularities, then $J_{b-1}(X)$  is irreducible of dimension $b\dim X$ by \cite{Mustata1}. 
\end{remark}
Finally, we show that the dimension bounds from Theorems \ref{thm:main.siegel}	and 	\ref{thm:main.dgc} are attained. 
\begin{example}
\label{ex: attaining upper bound}
	Consider an irreducible curve $C \subset \Gr(m+1, n+1)$
	of degree at least $2$ defined over $\CC$ and let 
	\[
	X=\{(p, W) \in \PP^n \times \Gr(m+1,n+1) : p \in \PP(W)\}
	\]	
	be the incidence variety corresponding to the Grassmannian.  	Let 
$p_1: X \to \PP^n$ and 	
	$p_2: X \to \Gr(m+1,n+1)$ denote the natural projections, and consider $Y_C=p_1(p_2^{-1}(C))$. 
	Assume that a general point in $Y_C$ lies in a unique linear space in $C$. Then  $\dim Y_C=m+1$ and $\deg Y_C = \deg(C)$ (see e.g.~\cite[Example 19.11]{Harris}). 
	Moreover, given $b \ge 1$ it is straightforward to verify that $\dim Y_C(b) \ge (m+1)b$, therefore $Y_C$ attains the dimension bound given in Theorems \ref{thm:main.siegel}	and 	\ref{thm:main.dgc}. 
\end{example}

%
\section{Integral points on curves}
\label{sec: counting points on curves}
In this section we prove Proposition~\ref{prop:curve.siegel}, which will form the base of the induction for Theorem~\ref{thm:main.siegel}. 

We first need a result about Pell equations, for which we briefly deal with more general function fields.
Let $L$ be a function field of a curve $C$ over $\CC$, and let $S\subset C(\CC)$ be a finite set of points. We will write the elements of $S$ as valuations on $L$. Denote by $\cO_{L,S}$ the ring of $S$-integers of $L$, which consists of all elements $a\in L$ for which $v(a)\geq 0$ for every $v\not\in S$.
For an element $a\in L$, recall that the height of $a$ is defined as
\[
h(a) = \deg(a),
\]
where $a$ is considered as a morphism $a: C\to \PP^1$. 
When $L=K$ and $S = \{\infty\}$, this height agrees with the height as defined in the previous section.

\begin{lemma}
Let $b$ be a positive integer. Then the set
\[
\cO_{L,S}^\times (b) = \{a\in \cO_{L,S}^\times: h(a)<b\}
\]
is finite up to multiplication by $\CC^\times$.
\end{lemma}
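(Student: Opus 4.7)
The plan is to reduce the claim to a counting problem on the divisor group of $C$ supported on $S$. The key observation is that $\cO_{L,S}^\times$ consists precisely of those $a \in L^\times$ whose principal divisor is supported on $S$, and the map $a \mapsto \operatorname{div}(a)$ has kernel $\CC^\times$, since any rational function on the projective curve $C$ with trivial divisor is a nonzero constant. Thus the finiteness of $\cO_{L,S}^\times(b)/\CC^\times$ reduces to showing that only finitely many divisors arise from elements of bounded height.

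To bound these divisors, I would write $\operatorname{div}(a) = \sum_{v \in S} n_v \cdot v$ with $n_v \in \ZZ$, and apply two elementary constraints. First, $\sum_{v \in S} n_v = 0$, since principal divisors have degree zero. Second, recalling that $h(a) = \deg(a : C \to \PP^1)$ equals the degree of the pole divisor of $a$, we have $\sum_{v \in S : n_v < 0} (-n_v) = h(a) < b$. Combined with the degree-zero constraint, this forces $|n_v| < b$ for every $v \in S$, so $\operatorname{div}(a)$ lies in a finite subset of $\ZZ^S$ of cardinality at most $(2b-1)^{|S|}$.

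Combining the two steps yields $|\cO_{L,S}^\times(b)/\CC^\times| \le (2b-1)^{|S|}$, where the constants $\CC^\times \subset \cO_{L,S}^\times(b)$ collapse to the single class corresponding to the zero divisor. There is no substantive obstacle; the argument is a direct translation of the standard divisor-theoretic description of $S$-units into a height-bounded form, and the only small point requiring care is matching the convention $h(a) = \deg(a : C \to \PP^1)$ with the degree of the pole divisor.
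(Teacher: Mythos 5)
Your proof is correct and follows essentially the same route as the paper: both arguments observe that an $S$-unit has divisor supported on $S$, use the degree-zero and degree-$<b$ constraints to confine the multiplicity vector to a finite subset of $\ZZ^S$, and then conclude via the fact that the divisor map $a \mapsto \operatorname{div}(a)$ has kernel exactly $\CC^\times$. The only difference is cosmetic notation and your added explicit cardinality bound $(2b-1)^{|S|}$.
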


\begin{proof}
Let $S = \{s_1, \ldots, s_n\}$ and let $a$ be an element of $\cO_{L,S}^\times (b)$. Then all zeroes and poles of $a$ are contained in $S$. Denoting by $e_i$ the order of $a$ at $s_i$, we obtain that
\[
\deg a = \sum_{e_i\geq 0} e_i = \sum_{e_i<0}-e_i < b.
\]
Hence there are only finitely many options for $e_1, \ldots, e_n$. We conclude by noting that an element of $L$ is determined by its zeroes and poles (with multiplicities), up to multiplication by $\CC^\times$.
\end{proof}

We need some auxiliary results about equations of Pell type to deduce Proposition~\ref{prop:curve.siegel}.

\begin{lemma}[Pell equation]\label{le:pell}
Let $\beta, \gamma$ be in $\cO_{K}$ and let $b$ be a positive integer. Assume that $\beta$ is not a square in $K$ and that $\gamma$ is non-zero. Then the equation 
\[
x^2 - \beta y^2 = \gamma
\]
has only finitely many solutions $a\in \cO_{K}^2$ with $h(a)\le b$.
\end{lemma}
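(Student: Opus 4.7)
The plan is to convert the Pell equation into a norm equation in the quadratic extension $L = K(\sqrt{\beta})$, and then apply the preceding $S$-unit lemma. Let $\sigma$ denote the non-trivial automorphism of $L$ over $K$, and let $S$ consist of the (at most two) places of $L$ above the infinite place of $K$, together with all finitely many places of $L$ above primes of $\cO_K$ in the support of $\gamma$; this is a finite set.

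To any solution $(x,y) \in \cO_K^2$ of $x^2 - \beta y^2 = \gamma$ with $h(x), h(y) \le b$, I associate $\alpha := x + y\sqrt{\beta} \in L$, so that $N_{L/K}(\alpha) = \alpha\, \sigma(\alpha) = \gamma$. Since $\beta \in \cO_K$, the element $\sqrt{\beta}$ is integral over $\cO_K$, hence so is $\alpha$; this gives $v(\alpha), v(\sigma(\alpha)) \ge 0$ at every finite place $v$ of $L$. At any finite place $v \notin S$, we have $v(\gamma) = 0$, and the norm relation $v(\alpha) + v(\sigma(\alpha)) = 0$ forces $v(\alpha) = 0$. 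Therefore $\alpha \in \cO_{L,S}^\times$.

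Next, I would bound $h_L(\alpha)$ in terms of $b$. Viewing an element $f \in K$ as an element of $L$ multiplies its height by $[L:K] = 2$, since pulling back the principal divisor of $f$ along the double cover of smooth projective curves doubles the degree. Hence $h_L(x), h_L(y) \le 2b$, and the standard inequalities $h_L(fg) \le h_L(f)+h_L(g)$ and $h_L(f+g) \le h_L(f)+h_L(g)$ yield
\[
h_L(\alpha) \le h_L(x) + h_L(y\sqrt{\beta}) \le 4b + h_L(\sqrt{\beta}),
\]
a bound depending only on $b$ and on the fixed datum $\beta$.

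By the preceding lemma applied to $L$ and $S$, the set of such $\alpha$ is finite modulo $\CC^\times$. To conclude finiteness of solutions, note that if $\alpha' = \lambda\alpha$ for some $\lambda \in \CC^\times$ and both elements have norm $\gamma$, then $\lambda^2 = 1$, so $\lambda = \pm 1$; thus each $\CC^\times$-equivalence class contributes at most two elements of norm $\gamma$. Finally, $\alpha$ determines $(x,y)$ uniquely through the $K$-basis $\{1, \sqrt{\beta}\}$ of $L$, and chaining these observations yields the desired finiteness. The only step requiring care is the comparison of heights between $K$ and $L$ under the double cover; beyond that, I do not foresee a serious obstacle.
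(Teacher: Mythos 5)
Your proof is correct and takes essentially the same approach as the paper: pass to $L = K(\sqrt{\beta})$, identify solutions with $S$-units of bounded height in $\cO_{L,S}^\times$, and invoke the preceding $S$-unit finiteness lemma. The one small variation is that you show directly that each $\alpha = x + y\sqrt{\beta}$ is itself an $S$-unit (via the norm relation at places outside $S$), whereas the paper applies the unit lemma to the \emph{ratio} $z = \alpha_1/\alpha_2$ of two solutions and then concludes via non-homogeneity of the Pell equation; your version is slightly more direct, and the $\lambda^2 = 1$ step is a cleaner way to finish.
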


\begin{proof}
Let $L = K(\sqrt{\beta})$, which is the function field of an irreducible curve $D$ since $\beta$ is not a square in $K$. The curve $D$ comes with a degree two cover $D\to C$. Let $S\subset D$ consist of all points above $\infty$ and all points above zeroes of $\gamma$. Let $(x_1, y_1)$ and $(x_2, y_2)$ be two solutions of $x^2-\beta y^2 = \gamma$ in $\cO_{K}^2$ of height at most $b$. 
Then the elements $x_i + y_i\sqrt{\beta}$ in $\cO_{L,S}$ have height at most $2b + h(\sqrt{\beta})$, and so the element $z = (x_1+y_1\sqrt{\beta})/(x_2+y_2\sqrt{\beta})$ has height bounded by $b'=4b+2h(\sqrt{\beta})$. 
But $z$ is also a unit in $\cO_{L,S}$ and so $z\in \cO_{L,S}^\times(b')$. We conclude by the previous lemma, and the fact that the equation $x^2 - \beta y^2 = \gamma$ is non-homogeneous (otherwise the set of solutions is a cone).
\end{proof}



To prove Proposition~\ref{prop:curve.siegel}, we will distinguish several cases.
Assume first that $C$ has genus at least one. 
If $C$ is not defined over $\CC$, then Siegel's theorem over function fields gives finiteness of $C(\cO_{K})$ (Theorems \ref{thm:Mason}(2) and \ref{thm:Manin-Grauert}). If $C$ is defined over $\CC$, then the following geometric argument shows that $C(K) = C(\CC)$.

\begin{lemma}\label{lem:curvesOverC}
	Let $C \subseteq \AA^2_\CC$ be a geometrically integral curve defined over $\CC$ of genus at least $1$. Then $C(K)=C(\CC)$. 
	In particular, $\dim C(b)\leq 1$ for every positive integer $b$.
\end{lemma}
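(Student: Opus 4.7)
The plan is to convert a $K$-rational point of $C$ into a morphism from $\PP^1_\CC$ to the smooth projective model of $C$, and then invoke the classical fact that any morphism from $\PP^1$ to a smooth projective curve of positive genus is constant.

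Concretely, I would let $\overline{C} \subset \PP^2_\CC$ be the projective closure of $C$ and $\nu: \widetilde{C} \to \overline{C}$ the normalization; then $\widetilde{C}$ is a smooth projective curve of genus at least $1$. A point $x = (a, b) \in C(K)$ gives a morphism $\Spec K \to \AA^2_\CC \subset \PP^2_\CC$, which, since $K = \CC(t)$ is the function field of $\PP^1_\CC$, is the same datum as a rational map $\PP^1_\CC \dashrightarrow \PP^2_\CC$. Because $\PP^1_\CC$ is a smooth curve and $\PP^2_\CC$ is proper, this extends uniquely to a morphism $f_x : \PP^1_\CC \to \PP^2_\CC$. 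Passing to the projective closure is essential here: it allows the poles of $a$ and $b$ to be sent to points at infinity of $\PP^2$ instead of producing indeterminacy. Since $\overline{C}$ is closed and the generic point of $\PP^1_\CC$ maps to $(a : b : 1) \in \overline{C}(K)$, the image of $f_x$ lies in $\overline{C}$, giving $f_x : \PP^1_\CC \to \overline{C}$.

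The heart of the argument is to lift $f_x$ through the normalization and derive a contradiction if $f_x$ is non-constant. If $f_x$ is non-constant, then it is dominant onto the irreducible curve $\overline{C}$, and since $\PP^1_\CC$ is normal, the universal property of normalization supplies a morphism $\widetilde{f}_x : \PP^1_\CC \to \widetilde{C}$ with $\nu \circ \widetilde{f}_x = f_x$, which is again non-constant. Riemann--Hurwitz then forces
\[
-2 \;=\; 2g(\PP^1_\CC) - 2 \;\geq\; (\deg \widetilde{f}_x)\bigl(2g(\widetilde{C}) - 2\bigr) \;\geq\; 0,
\]
a contradiction. Hence $f_x$ must be constant; its image is a single $\CC$-point of $\overline{C}$, and since this point equals our original $x \in C$, we conclude $x \in C(\CC)$.

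Having established $C(K) = C(\CC)$, the final assertion is immediate: any element of $C(b)$ must have both coordinates constant in $t$, so $C(b)$ coincides set-theoretically with $C(\CC)$ embedded in $\AA^{2b}_\CC$ via the constant-coefficient inclusion, and hence has dimension at most $\dim C = 1$. The step I expect to require the most care is confirming that the extension across poles and the lift through normalization both go through cleanly when $C$ is singular, but both are standard once the smooth projective model is fixed.
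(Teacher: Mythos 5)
Your proof is correct and takes essentially the same route as the paper: extend the $K$-point to a morphism $\PP^1_\CC \to \overline{C}$, lift through the normalization using normality of $\PP^1$, and conclude constancy because the target has genus at least $1$. The only cosmetic difference is that you spell out the genus obstruction via Riemann--Hurwitz, whereas the paper simply cites the fact that a non-constant map from $\PP^1$ to a positive-genus curve cannot exist.
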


\begin{proof}
Let $C_s\to C$ be the normalization of $C$, which is a smooth projective curve birational to $C$. Any $K$-point of $C$ corresponds to a morphism $\PP_\CC^1\to C$, which yields a map $\PP_\CC^1\to C_s$ since $\PP^1$ is normal. As $C_s$ has genus at least $1$, this morphism must be constant, i.e.~it must correspond to a $\CC$-point on $C$. 

For the final statement simply note that every $\cO_{K}$-point is also a $K$-point.
\end{proof}
%
%

For the genus $0$ case, we will need the following classical lemma to parametrize $C$.
We embed $\AA^2_K$ in $\PP^2_K$ so that the line at infinity is given by $x_2 = 0$.

\begin{lemma}\label{lem:gen0Parametr}
	Let $C \subset \mathbb{P}_K^2$ be a geometrically integral projective curve over $K$.
	Suppose that $C$ has genus $0$ and that $C(K)$ has at least three smooth points.
	Then there exist homogeneous polynomials $P_1, P_2, P_3 \in \cO_K[s,t]$ all of the same degree such that
	\[
	\mathbb{P}^1_K \to C: [s:t] \mapsto [P_1(s,t):P_2(s,t):P_3(s,t)]
	\]
	is a birational map. 
	Moreover, if $C$ has at least three points at infinity and infinitely many points of the form $[x:y:1]$, with $x, y \in \cO_K$, then we can take $P_3$ to be a polynomial over $\CC$.
\end{lemma}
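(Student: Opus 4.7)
The plan for the first statement is to reduce to the classical fact that a smooth, geometrically integral, projective curve of genus $0$ over $K$ with a $K$-rational point is isomorphic to $\PP^1_K$. First, I would pass to the normalization $\tilde C \to C$; since normalization is an isomorphism over the smooth locus, any smooth $K$-point of $C$ lifts to a $K$-point of $\tilde C$. Applying Riemann--Roch to the divisor of this lifted point on the smooth genus-$0$ projective curve $\tilde C$ yields a $K$-isomorphism $\tilde C \cong \PP^1_K$. Composing with the normalization morphism produces a birational $K$-morphism $\PP^1_K \to C$, which in the ambient coordinates of $C\subset\PP^2_K$ takes the form $[s:t]\mapsto [Q_1:Q_2:Q_3]$ for homogeneous $Q_1,Q_2,Q_3\in K[s,t]$ of a common degree. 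Clearing a common denominator from $K$ then gives the required $P_1,P_2,P_3\in \cO_K[s,t]$.

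For the moreover statement, the key input is Mason's theorem, Theorem~\ref{thm:Mason}(1), whose hypotheses are exactly the extra assumptions imposed on $C$. It provides a rational parametrization
\[
\psi\colon \AA^1_K \to C,\qquad x \mapsto \bigl(a(x)/c(x),\, b(x)/c(x)\bigr),
\]
with $a,b\in \cO_K[x]$, $c\in \CC[x]$, and no common factor among $a,b,c$. I would then homogenize: set $d := \max(\deg a, \deg b, \deg c)$, substitute $x=s/t$, and multiply each polynomial by the appropriate power of $t$ to obtain homogeneous $P_1, P_2, P_3$ of degree $d$. Because $c$ and $t$ are defined over $\CC$, the resulting $P_3$ lies in $\CC[s,t]$, as required, while $P_1, P_2 \in \cO_K[s,t]$. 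A short verification---using that $a,b,c$ are coprime and that at least one $P_i$ has degree exactly $d$ in $s$---shows that the map $[s:t]\mapsto [P_1:P_2:P_3]$ is defined on all of $\PP^1_K$, agrees with $\psi$ on the affine chart $\{t=1\}$, and is therefore birational onto $C$.

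No serious obstacle arises here: the first part is standard genus-$0$ theory via normalization and Riemann--Roch, while for the second part Mason's theorem supplies all the arithmetic content---the only remaining task is the routine homogenization, keeping careful track of which factor remains defined over $\CC$.
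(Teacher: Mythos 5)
Your proof of the first part is correct but takes a genuinely different route from the paper's. The paper passes to $\overline{K}$, chooses an isomorphism $\PP^1_{\overline K}\cong C_{s,\overline K}$ normalized so that three of the given smooth $K$-points correspond to $[0:1],[1:0],[1:1]$, and then runs a Galois descent: for $\sigma\in\Gal(\overline K/K)$ the twisted map $\psi^\sigma$ again parametrizes $C_{\overline K}$, differs from $\psi$ by an automorphism of $\PP^1$, and agrees with $\psi$ at three points, so $\psi^\sigma=\psi$ and the $P_i$ have coefficients in $K$. You instead quote the classical Riemann--Roch fact that a smooth projective geometrically integral genus-$0$ curve over $K$ with a rational point is $K$-isomorphic to $\PP^1_K$; this uses only one smooth $K$-point rather than three and avoids the explicit base change and descent. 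Both are sound: the paper's argument is more hands-on and self-contained, while yours is shorter but delegates the descent to the standard genus-$0$ result. For the ``moreover'' part both you and the paper reduce to Mason's Theorem~\ref{thm:Mason}(1); you make explicit the routine homogenization that the paper leaves to the reader, which is fine. One shared implicit point: for the homogenized map to be birational (rather than merely dominant) one needs that Mason's parametrization $\psi$ is itself birational; both you and the paper use this reading of ``rational parametrization'' without further comment.
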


\begin{proof}
	Let $\overline{K}$ be the algebraic closure of $K$ and let $C_s\to C$ be the normalization of $C$.
	Then $C_{s, \overline{K}}$ is an irreducible smooth, projective, genus $0$ curve and hence isomorphic to $\mathbb{P}_{\overline{K}}^1$. 
	Therefore we can find an isomorphism $\psi_1:\mathbb{P}_{\overline{K}}^1 \to C_{s, \overline{K}}$, which leads to a birational map
	\[
	\psi:\mathbb{P}_{\overline{K}}^1 \to C_{\overline{K}}: [s:t] \mapsto [P_1(s,t):P_2(s,t):P_3(s,t)],
	\]
	where the $P_i$ are homogeneous polynomials over $\overline{K}$.
	Moreover, by composing with an automorphism of $\mathbb{P}_{\overline{K}}^1$ we may assume that $\psi([0:1])$, $\psi([1:0])$ and $\psi([1:1])$ are smooth $K$-points on $C(K)$.
	We may also assume that $P_3$ has $1$ as one of its coefficients.
	
	We now claim that $P_i$ have coefficients $K$. 
	To see this, consider any $\sigma \in \text{Gal}(\overline{K}/K)$.
	Let $\psi^{\sigma}: \mathbb{P}_{\overline{K}}^1 \to \mathbb{P}_{\overline{K}}^2$ be the map $\psi$ with all the coefficients of the $P_i$ replaced by their images under $\sigma$.
	Note that $\psi^{\sigma}\left( \mathbb{P}_K^1\right)=\sigma\left(\psi\left(\mathbb{P}_K^1\right)\right)$.
	As $C$ is defined over $K$, the latter is actually a subset of $C_{\overline{K}}$.
	As a consequence, the image of $\psi^{\sigma}$ is also $C_{\overline{K}}$.
	
	Now note that $\psi^{\sigma}$ pulls back to a map $\psi^{\sigma}_1: \mathbb{P}_{\overline{K}}^1 \to C_{s, \overline{K}}$.
	Moreover, because $\psi^{\sigma}$ is injective on an open subset of $\mathbb{P}_K^1$, $\psi^{\sigma}_1$ has degree $1$ and is therefore an isomorphism.
	Hence $\psi_1^{\sigma}=\psi_1 \circ \alpha$ for some automorphism $\alpha$ of $\mathbb{P}_K^1$.
	Because both $\psi$ and $\psi^{\sigma}$ take the same values on $[0:1]$, $[1:0]$ and $[1:1]$, it follows that $\alpha$ is the identity, so $\psi=\psi^{\sigma}$.
	As $P_3$ had $1$ as one of its coefficients, it follows that $P_i^{\sigma}=P_i$. 
	We conclude $P_i$ are defined over $K$.
	The final claim follows from Theorem~\ref{thm:Mason}.	
\end{proof}

\begin{proof}[Proof of Proposition~\ref{prop:curve.siegel}]
Assume first that $C$ has genus at least $1$.
If $C$ is defined over $\CC$, then the result follows from Lemma~\ref{lem:curvesOverC}.
So assume that $C$ is not defined over $\CC$. 
If $C$ has genus at least $2$ then the result follows from~Theorem~\ref{thm:Manin-Grauert}.
If $C$ has genus $1$, then by Mason's theorem \ref{thm:Mason}, the $\cO_K$-points consist of an infinite family parameterized by $\CC$, and finitely many other ones, so we get $\dim C(b) \le 1$ as well. 

We may therefore assume that $C$ has genus $0$.
By Lemma~\ref{lem:gen0Parametr}, there exist homogeneous polynomials $P_1, P_2, P_3\in \cO_K[s,t]$ such that all but finitely many integral points on $C$ are of the form
\[
\left(\frac{P_1(u,v)}{P_3(u,v)}, \frac{P_2(u,v)}{P_3(u,v)}\right).
\]
Now we need to find $[u:v] \in \PP_K^2$ for which $P_1(u,v)/P_3(u,v)$ and $P_2(u,v)/P_3(u,v)$ are both elements of $\cO_K$.
We may rewrite these rational functions as $Q_1/R_1$, $Q_2/R_2$, where $Q_i\in \cO_K[u,v]$ and $R_i\in \cO_K[u,v]$ have no common factors over $K[u,v]$.
By taking the homogeneous resultant, we may find $A_i, B_i, C_i, D_i\in \cO_K[u,v]$ with
\[
A_i Q_i+B_i R_i=x_i u^{k_i}
\]
and
\[
C_i Q_i+D_i R_i=y_i v^{\ell_i},
\]
where $x_i, y_i \in \cO_K$ and $k_i, \ell_i \in \NN$.
Therefore, we see that if $Q_i(u,v)/R_i(u,v) \in \cO_K$ for a certain choice of $(u,v) \in \cO_K^2$, then $R_i(u,v) \mid x_i u^{k_i}$ and $R_i(u, v) \mid y_i v^{\ell_i}$.
Since we may assume that $u$ and $v$ have no common factors, we conclude that $R_i(u,v)\mid x_i, y_i$, proving that $R_i$ can only take finitely many values, up to a unit (i.e. an element of $\CC^\times$), as any element of $\cO_K$ only has finitely many divisors.

Assume now that $C$ has at least three distinct points at infinity.
Then we moreover have that $P_3$ and hence $R_1, R_2$ are polynomials over $\CC$.
Therefore, both of $R_1, R_2$ factor over $\CC$ into linear factors.
As there are at least two different points at infinity, at least two among these factors must be linearly independent, say $au+bv$ and $a'u+b'v$.
Now, because $R_1$ and $R_2$ can take only finitely many values up to a unit, the same is true for $a'u+b'v$ and $au+bv$.
Moreover, because the choice of $[u:v]$ is homogeneous in $u, v$, we may assume that $au+bv$ takes only finitely many values (by dividing out in $u, v$ by any potential unit).
Then $a'u+b'v$ takes values in a one-dimensional space, and therefore so do $u$, $v$, showing that indeed $\dim C(b) \le 1$.

Finally, if $C$ contains precisely two points at infinity, then the $R_i$ must be either of the form $L_2^{r_i}$ for $L_2\in \cO_K[s,t]$ of degree $2$ over $K$, or of the form $L_1^{r_1} L_2^{r_2}$ for two linearly independent linear forms.
In the former case, $L_2$ can take only finitely many values, and we conclude by Lemma~\ref{le:pell} (by completing the square); in the latter case, we can use precisely the same strategy as before.
\end{proof}

\section{The determinant method}
\label{sec: the determinant method}

Our main technique for proving Theorem~\ref{thm:main.dgc} is a geometric analogue of the determinant method. This method allows one to construct auxiliary hypersurfaces capturing  rational points of bounded height lying on $X$, and is the main topic of this section. Over global fields, this method has a long history with many improvements over the years. Let us mention the work of Bombieri--Pila~\cite{Bombieri-Pila} where the determinant method first appears, Heath--Brown's $p$-adic adaptation~\cite{Heath-Brown-Ann}, Salberger's global improvement using many primes~\cite{Salberger-dgc}, Walsh's work to keep track of the height of the polynomial~\cite{Walsh} and finally a version which is polynomial in $d$ due to Castryck--Cluckers--Dittmann--Nguyen~\cite{CCDN-dgc}. 

Our geometric determinant method is a mixture of these works.
It resembles Heath--Brown's $p$-adic determinant method for multiple primes at the same time, but with the improvements by Walsh and Castryck--Cluckers--Dittmann--Nguyen. 

We also prove Theorems \ref{thm: affine curves} and \ref{thm: projective curves} in this section. 

\subsection{A determinant lemma}

We need a lemma on divisibility of certain determinants.

For $X\subset \PP^{n+1}_{\cO_K}$ and $p\in \cO_K$ a non-zero prime element, we denote by $X_p\subset \PP^{n+1}_\CC$ the reduction of $X$ modulo $p$.
If $X$ is described by a set of polynomial equations over $\cO_K$ and $p = t-\lambda\in \cO_K$, then $X_p$ is obtained by simply substituting $t=\lambda$ in the equations for $X$.

\begin{lemma}[Divisibility of determinants]\label{lem:divisibility}
Let $X\subset \PP^{n+1}_{\cO_K}$ be a geometrically integral hypersurface, and for a non-zero prime element $p\in\cO_K$ let $X_p$ be the reduction of $X$ modulo $p$. Let $P$ be a $\CC$-point on $X_p$ of multiplicity $\mu$, and let $a_1, \ldots, a_s$ be $\cO_K$-points on $X$, given by primitive $\cO_K$-coordinates, that all reduce to $P$ modulo $p$. Let $F_1, \ldots, F_s$ be homogeneous polynomials in $x_0, \ldots, x_{n+1}$ over $\cO_K$. Then $\det(F_i(a_j))_{i,j}$ is divisible by $p^e$ with
\[
e\geq \left(\frac{n!}{\mu}\right)^{1/n}\frac{n}{n+1}s^{1+1/n} - O_n(s).
\]
\end{lemma}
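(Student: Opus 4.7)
The plan is to adapt the classical $p$-adic determinant argument of Heath-Brown and Salberger to the function field setting. I work in the completed local ring $\widehat R := \widehat{\cO_{X, P}}$, a complete local $\CC[[\pi]]$-algebra (with $\pi$ a uniformiser at $p$) of dimension $n+1$ and residue field $\CC$. Each $\cO_K$-point $a_j$ reducing to $P$ mod $p$ induces a continuous $\CC[[\pi]]$-algebra map $\alpha_j : \widehat R \to \CC[[\pi]]$ sending the fibral maximal ideal into $(\pi)$, and $F_i(a_j) = \alpha_j(F_i)$. After dehomogenising so that $P$ is the origin, I would apply a generic $\CC$-linear change of the fibral coordinates (justified by Corollary~\ref{cor: existence of C points}) followed by Weierstrass preparation to obtain the presentation $\widehat R \cong \CC[[\pi, y_1, \ldots, y_n]][y_{n+1}]/(W)$, where $W = y_{n+1}^\mu + a_{\mu-1} y_{n+1}^{\mu-1} + \cdots + a_0$ is a distinguished polynomial with $a_k \in (\pi, y_1, \ldots, y_n)$. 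This exhibits $\widehat R$ as a free $\CC[[\pi, y_1, \ldots, y_n]]$-module of rank $\mu$, and each $F_i$ acquires a unique expansion
\[
F_i \;=\; \sum_{\lambda} c_{i, \lambda} \, \pi^{\beta_0} y^{\beta'} y_{n+1}^k, \qquad \lambda = (k, \beta_0, \beta') \in \{0, \ldots, \mu-1\} \times \NN \times \NN^n, \; c_{i, \lambda} \in \CC.
\]

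Setting $\alpha_j(y_r) = \pi \tau_{j, r}$ for $r \le n$ and $\alpha_j(y_{n+1}) = \pi \sigma_j$ — the latter coming from the Newton polygon of $W$ viewed as a polynomial in $y_{n+1}$ over $\CC[[\pi]]$, once $\alpha_j(y_1), \ldots, \alpha_j(y_n) \in (\pi)$ is known — yields
\[
F_i(a_j) \;=\; \sum_\lambda c_{i, \lambda} \, \pi^{w(\lambda)} \, u_j^{(\lambda)}, \qquad w(\lambda) := k + \beta_0 + |\beta'|, \qquad u_j^{(\lambda)} := \tau_j^{\beta'} \sigma_j^k.
\]
Writing $M = C \, \Pi \, U$ with $\Pi = \mathrm{diag}(\pi^{w(\lambda)})$ and applying the Cauchy-Binet formula,
\[
\det M \;=\; \sum_{|S| = s} \det(C_{|S}) \, \pi^{\sum_{\lambda \in S} w(\lambda)} \, \det(U_{|S}),
\]
each summand has $p$-adic valuation at least $\sum_{\lambda \in S} w(\lambda)$, and consequently $v_p(\det M) \ge \min_{|S| = s} \sum_{\lambda \in S} w(\lambda)$ where the minimum may be restricted to those $S$ with $\det(U_{|S}) \neq 0$. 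The key observation is that $u_j^{(\lambda)}$ depends only on $(k, \beta')$ and not on $\beta_0$, so $\det(U_{|S})$ vanishes whenever $S$ contains two indices sharing the same $(k, \beta')$-pair. Hence we may restrict to sets $S$ of $s$ distinct such pairs taken with $\beta_0 = 0$ throughout, and the minimum weight becomes the sum of the $s$ smallest values of $k + |\beta'|$ on $\{0, \ldots, \mu-1\} \times \NN^n$.

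This final combinatorial quantity equals $\sum_{T \ge 1} \max(0, s - H(T))$, where $H(T) = \#\{(k, \beta') : 0 \le k < \mu, \, k + |\beta'| < T\}$ is precisely the Hilbert function of $\cO_{X_p, P}$ at its maximal ideal (as read off from the Weierstrass presentation). By Hilbert-Samuel, $H(T) = \frac{\mu}{n!}T^n + O_n(T^{n-1})$, and a routine integral estimate then delivers the claimed bound $\bigl(\frac{n!}{\mu}\bigr)^{1/n} \frac{n}{n+1} s^{1 + 1/n} - O_n(s)$. The principal technical points are the generic coordinate change required for Weierstrass preparation — ensuring a non-vanishing $y_{n+1}^\mu$-coefficient in the leading degree-$\mu$ part of the reduction of the defining equation, which is automatic after a generic $\CC$-linear change since that leading part is a nonzero homogeneous form of degree $\mu$ — together with the Newton polygon verification that $\alpha_j(y_{n+1}) \in (\pi)$ once the other $\alpha_j(y_r)$ are known to lie in $(\pi)$.
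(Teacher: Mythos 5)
Your argument is correct and reconstructs precisely the local-expansion proof that the paper delegates by citation to Salberger's determinant lemma together with the degree-independence refinement of Castryck--Cluckers--Dittmann--Nguyen: Weierstrass preparation of $\widehat{\cO_{X,P}}$ as a free rank-$\mu$ module over $\CC[[\pi,y_1,\dots,y_n]]$, the Cauchy--Binet decomposition, the collapse to sets of distinct $(k,\beta')$-pairs with $\beta_0=0$, and the Hilbert--Samuel asymptotic $H(T)=\frac{\mu}{n!}T^n+O_n(T^{n-1})$ feeding the integral estimate. Two small remarks: the Newton-polygon step is superfluous, since $y_{n+1}$ is a $\CC$-linear combination of the ambient affine coordinates vanishing at $P$, so $a_j\equiv P\pmod p$ already forces $\alpha_j(y_{n+1})\in(\pi)$ alongside the other fibral coordinates; and passing from $\det\bigl(F_i(a_j)\bigr)$ to the dehomogenised determinant $\det\bigl(\alpha_j(F_i)\bigr)$ tacitly uses that all $F_i$ share a common degree so that the powers of $x_{i_0}(a_j)$ factor out of the determinant as a single unit --- which is the only case the paper ever applies and the standing convention in the cited sources.
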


\begin{proof}
This follows exactly as in~\cite[Lem.\,2.5]{SalbCrelle}. To make the upper bound independent of $d$, one follows~\cite[Cor.\,2.5]{CCDN-dgc}.
\end{proof}

\subsection{Auxiliary polynomials}

For the construction of our auxiliary polynomials, we need a coordinate transformation which keeps track of the height of our equations. 
For a homogeneous polynomial $f\in \cO_K[x_0, \ldots, x_{n+1}]$ denote by $c_f$ the coefficient of $x_{n+1}^d$ in $f$, and by $h(f)$ the height of the polynomial $f$ considered as a point in projective space $\PP^N(K)$. The following is clear. 

\begin{lemma}\label{lem:coordinate.transformation}
Let $f\in \cO_K[x_0, \ldots, x_{n+1}]$ be primitive and homogeneous of degree $d$ and set $X=V(f)$. 
Take $a_0, \ldots,a_n \in \CC$  such that 
\[
h(f(a_0, \ldots, a_n, 1)) = h(f),
\]
and consider the invertible linear transformation 
\[
A(x_0,\ldots,x_{n+1})=(x_0+a_0x_{n+1},\ldots,x_{n}+a_n x_{n+1},x_{n+1}).
\]
Then $g(x):=f(A(x))$ is a primitive and homogeneous
polynomial defined over $\cO_K$ 
 of degree $d$ such that $h(g)=h(c_g) = h(f)$.
 Moreover, denoting $Y=V(g)$,   the varieties $X(b)$ and $Y(b)$ are isomorphic for every integer $b \ge 1$.
%
\end{lemma}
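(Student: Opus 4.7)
The plan is to verify the three assertions in the conclusion in turn: (i) homogeneity and primitivity of $g$, (ii) the height identity $h(g)=h(c_g)=h(f)$, and (iii) the isomorphism $X(b)\cong Y(b)$. As the author's remark ``The following is clear'' suggests, each step is a short direct computation with no substantial obstacle.

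First, I would briefly justify the existence of $(a_0,\ldots,a_n)\in\CC^{n+1}$ with $h(f(a_0,\ldots,a_n,1))=h(f)$. Writing $f=\sum_\alpha c_\alpha(t)\,x^\alpha$, the coefficient of $t^{h(f)}$ in $f(x_0,\ldots,x_n,1)$ is a polynomial in $x_0,\ldots,x_n$ over $\CC$ which is not identically zero, because $f$ is primitive of height exactly $h(f)$, so some $c_\alpha$ has $t$-degree equal to $h(f)$. A suitable $(a_0,\ldots,a_n)\in\CC^{n+1}$ outside its zero locus then exists, e.g.\ by Corollary~\ref{cor: existence of C points}.

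Next, observe that $A\in\GL_{n+2}(\CC)\subset\GL_{n+2}(\cO_K)$, so the substitution $g:=f\circ A$ is a degree-preserving automorphism of $\cO_K[x_0,\ldots,x_{n+1}]$. Hence $g$ is homogeneous of degree $d$, and applying $A^{-1}$ to any would-be common divisor $c\in\cO_K$ of the coefficients of $g$ shows that $c$ divides all coefficients of $f=g\circ A^{-1}$, forcing it to be a unit by primitivity of $f$. For the height identity I would compute
\[
g(0,\ldots,0,x_{n+1})=f(a_0x_{n+1},\ldots,a_nx_{n+1},x_{n+1})=x_{n+1}^d\,f(a_0,\ldots,a_n,1),
\]
which gives $c_g=f(a_0,\ldots,a_n,1)$ and hence $h(c_g)=h(f)$ by the choice of the $a_i$'s. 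On the other hand, every coefficient of $g$ is a $\CC$-linear combination of coefficients of $f$ and so has $t$-degree at most $h(f)$; this gives $h(g)\le h(f)=h(c_g)\le h(g)$, and therefore equality throughout.

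For (iii), the key point is that $A$ has $\CC$-entries, so it induces a $\CC$-linear automorphism of the space of $(n+2)$-tuples of polynomials in $t$ of degree $<b$ that preserves coprimality of the tuple. Following the construction in Lemma~\ref{lem:Xb.is.variety}, this descends to an automorphism of $\PP^{n+1}_K(b)$ sending $Y(b)$ isomorphically onto $X(b)$ (the inverse is given by $A^{-1}$, also defined over $\CC$). The whole argument is essentially bookkeeping: there is no real obstacle, and the only step meriting a moment's care is the existence of the $a_i$'s, which relies on $\CC$ being infinite.
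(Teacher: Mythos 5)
Your proof is correct and fills in precisely the routine details the paper dismisses with ``The following is clear'' (the paper gives no proof of this lemma). Each of your steps—existence of the $a_i$, invariance of degree and primitivity under the $\GL_{n+2}(\CC)$ substitution, the computation $c_g=f(a_0,\ldots,a_n,1)$ yielding $h(c_g)=h(f)$, the sandwiching argument for $h(g)$, and the observation that a constant-coefficient change of coordinates preserves heights and coprimality and hence induces an isomorphism on the $(b)$-level—is the intended argument.
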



\begin{theorem}[Projective auxiliary polynomials]\label{thm:proj.aux.poly}
Let $f\in \cO_K[x_0, \ldots, x_{n+1}]$ be absolutely irreducible, primitive and homogeneous of degree $d$, let $X$ be the variety cut out by $f$ in $\PP^{n+1}_{K}$, and let $b$ be a positive integer. Take positive integers $\mu_1, \ldots, \mu_\ell$ so that
\[
\sum_{i=1}^\ell \frac{1}{\mu_i^{1/n}}\geq \frac{(n+1)(b-1)}{nd^{1/n}}  - \frac{h(f)-1}{nd^{1+1/n}},
\]
and let $p_1, \ldots, p_\ell\in \cO_K$ be non-zero primes. 
Then if $P=(P_1,\ldots,P_\ell)\in \prod X_{p_i}(\CC)$ is a tuple where each $P_i$ is of multiplicity $\mu_i$, then there exists a polynomial 
\[
g:=g_{P}\in \cO_K[x_0, \ldots, x_{n+1}]
\]
coprime to $f$, of degree $O_n(d^{1+1/n}b + d^3\ell )$, and vanishing on
\[
X(b; P_1, \ldots, P_{\ell}) = \{x\in X(b): x\equiv P_i \bmod p_i\}.
\]
%
\end{theorem}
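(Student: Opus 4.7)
The plan is to adapt the $p$-adic determinant method of Heath--Brown, together with the refinements of Walsh and Castryck--Cluckers--Dittmann--Nguyen, to the geometric setting over $\cO_K = \CC[t]$, while using all $\ell$ primes $p_i$ simultaneously as in Salberger. First I would apply Lemma~\ref{lem:coordinate.transformation} to arrange that the coefficient $c_f$ of $x_{n+1}^d$ in $f$ has height equal to $h(f)$; this does not change $X(b;P_1,\ldots,P_\ell)$ up to isomorphism. For a parameter $D$, to be chosen at the end, let $V_D$ denote the $K$-vector space of homogeneous degree-$D$ polynomials in $x_0,\ldots,x_{n+1}$ modulo multiples of $f$. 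The normalization ensures that the monomials $\phi_1,\ldots,\phi_N$ of degree $D$ with $x_{n+1}$-exponent strictly less than $d$ form a $K$-basis of $V_D$, and $N = \binom{D+n+1}{n+1} - \binom{D-d+n+1}{n+1} \asymp_n \tfrac{d}{n!}D^n$.

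Next, I would choose finitely many primitive $\cO_K$-representatives $a_1, a_2, \ldots$ of points of $X(b;P_1,\ldots,P_\ell)$ whose joint vanishing forces vanishing on the whole $\CC$-subvariety $X(b;P_1,\ldots,P_\ell)$ of $X(b)$; this is possible by Noetherianity, e.g.\ by taking finitely many generic members from each irreducible component. Assemble the evaluation matrix $M = \bigl(\phi_i(a_j)\bigr)_{i,j}$: each entry lies in $\cO_K$ of height at most $Db$, and since every $a_j$ reduces to $P_i$ modulo $p_i$ at a point of multiplicity $\mu_i$, Lemma~\ref{lem:divisibility} gives that every $N\times N$ minor of $M$ is divisible by $\prod_i p_i^{e_i}$ with
\[
e_i \;\geq\; \left(\frac{n!}{\mu_i}\right)^{\!1/n}\frac{n}{n+1}\,N^{1+1/n} \;-\; O_n(N).
\]
Feeding an $(N{-}1)\times N$ submatrix of $M$ into Bombieri--Vaaler (Lemma~\ref{lem:BV}) yields a nonzero kernel vector $c \in \cO_K^N$; the polynomial $g := \sum_i c_i \phi_i$ then vanishes at every $a_j$, hence on all of $X(b;P_1,\ldots,P_\ell)$ by the choice of the $a_j$. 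Since $c$ is nonzero in $V_D$, the polynomial $g$ is not a multiple of $f$, so by absolute irreducibility of $f$ the two are coprime.

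The main obstacle is the quantitative balancing needed to pin down $D$. One has to verify that the hypothesis
\[
\sum_i \mu_i^{-1/n} \;\geq\; \frac{(n+1)(b-1)}{n\,d^{1/n}} - \frac{h(f)-1}{n\,d^{1+1/n}}
\]
is precisely the inequality guaranteeing, once the divisibility by $\prod p_i^{e_i}$ is pulled out and fed into the Bombieri--Vaaler bound, that the resulting height estimate on $c$ is non-negative (equivalently, that a valid $c$ exists); solving for $D$ then yields $D = O_n(d^{1+1/n}b + d^3\ell)$. The $d^{1+1/n}b$ term is the usual determinant-method contribution arising from the balance between $N \asymp dD^n$ and the entry heights $\lesssim Db$, while the $d^3\ell$ term absorbs the $O_n(N)$ lower-order error from Lemma~\ref{lem:divisibility} accumulated across the $\ell$ primes. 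This numerical bookkeeping, rather than any single algebraic step, is where the argument is likely to be most delicate; a secondary technicality is justifying that vanishing at finitely many chosen $\CC$-representatives really forces vanishing on the full, possibly positive-dimensional, variety $X(b;P_1,\ldots,P_\ell)$.
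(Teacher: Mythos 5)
Your setup is on the right track (the coordinate change from Lemma~\ref{lem:coordinate.transformation}, the monomial basis of $V_D$ of size $N=|B[D]|-|B[D-d]|$, the evaluation matrix, the divisibility Lemma~\ref{lem:divisibility}, and Bombieri--Vaaler), and you correctly identify that the bound in the theorem hypothesis should emerge from a numerical comparison after dividing out the prime divisibility. However, the core logical step is flawed. You apply Lemma~\ref{lem:BV} to an $(N-1)\times N$ submatrix of $M$ and then claim that the resulting kernel vector $c$ gives a polynomial $g=\sum c_i\phi_i$ that vanishes on \emph{all} the $a_j$ and hence on $X(b;P_1,\ldots,P_\ell)$. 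But $c$ lies only in the kernel of the chosen $(N-1)\times N$ submatrix, so $g$ vanishes only at the $N-1$ points you selected for that submatrix, not at the remaining $a_j$; in particular, the key property ``vanishing on the whole subvariety'' does not follow. There is no argument anywhere showing that the points of $X(b;P_1,\ldots,P_\ell)$ impose only $N-1$ (or even $<N$) linearly independent conditions on degree-$D$ forms modulo $f$, which is precisely what needs to be established. The paper resolves this by a contradiction argument: it fixes a \emph{maximal} linearly independent subset $\Xi$ of $X(b;P_1,\ldots,P_\ell)$ (so that vanishing on $\Xi$ forces vanishing on all of $X(b;P_1,\ldots,P_\ell)$), assumes for contradiction that every degree-$M$ form vanishing on $X(b;P_1,\ldots,P_\ell)$ is a multiple of $f$ (so $|\Xi| = N$), and then shows the upper bound on $h(\Delta)$ from Bombieri--Vaaler is incompatible with the lower bound from Lemma~\ref{lem:divisibility} for $M$ of the stated size.

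Two further issues. First, your diagnostic ``the hypothesis guarantees the Bombieri--Vaaler height bound on $c$ is non-negative, equivalently a valid $c$ exists'' is not correct: when $A$ has fewer rows than columns and full rank, a nonzero kernel vector always exists and the bound in Lemma~\ref{lem:BV} is automatically non-negative, so non-negativity of the bound is vacuous rather than the decisive inequality. The actual contradiction comes from comparing $h(\Delta)$ from above (via the kernel structure and entry heights) and from below (via divisibility by the $p_i$), which is a different balance. Second, by restricting to the $N$ basis monomials (rather than working, as the paper does, with the full $s\times r$ matrix over all of $B[M]$ and exploiting the identification of its kernel with $f\cdot B[M-d]$), you lose the $(r-s)\,h(f)$ term in the upper bound on $h(\Delta)$. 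That term is precisely the Walsh-style refinement that produces the $-\frac{h(f)-1}{nd^{1+1/n}}$ correction in the theorem's hypothesis, so the inequality you claim you would ``solve for $D$'' would not match the stated theorem.
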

The most important use of this theorem is when all $\mu_i$ are equal to $1$. In that case, one simply takes $\ell$ primes $p_1, \ldots, p_\ell$ and smooth points $P_i\in X_{p_i}(\CC)$. An application of this case is enough to deduce Theorem~\ref{thm: projective curves}. 
However, to obtain Theorem~\ref{thm:main.dgc} for small values of $d$, namely $6\leq d\leq 9$, we will need to  apply this result with some singular points on the reductions $X_{p_i}$.

\begin{proof}
Denote by $c_f$ the coefficient of $x_{n+1}^d$ in $f$. Using the coordinate transformation from Lemma~\ref{lem:coordinate.transformation}, we can assume that $h(c_f)= h(f)$. 

Let $M$ denote the candidate degree of $g$, and denote by $B[M]$ the set of homogeneous monomials in $x_0, \ldots, x_{n+1}$ of degree $M$.
Note that $|B[M]| = \binom{M+n+1}{n+1}$.
Let $\Xi$ be a maximal subset of $X(b; P_1, \ldots, P_\ell)$ which is algebraically independent over $B[M]$ in the sense that applying the elements of $B[M]$ to $\Xi$ gives a matrix $A$ with linearly independent rows. So the columns of $A$ are parametrized by $B[M]$, while the rows are parametrized by $\Xi$. Let $r = |B[M]|$ and $s = |\Xi|$. If $g$ is a polynomial of degree $M$ vanishing on $\Xi$, then it actually vanishes on all of $X(b; P_1, \ldots, P_\ell)$. Assume that all degree $M$ polynomials vanishing on $X(b; P_1, \ldots, P_\ell)$ are divisible by $f$. We will obtain a contradiction when choosing $\ell$, the $\mu_i$, and $M$ as stated. For use below, we may already assume that $d^2 = O_n(M)$.

The matrix $A$ describes a linear system whose solutions are the degree $M$ homogeneous polynomials vanishing on $\Xi$. By assumption, the kernel of $A$ is exactly $f\cdot B[M-d]$, and so we obtain that $s=|B[M]| - |B[M-d]|$. Let $\Delta$ be the greatest common divisor of all $s\times s$ minors of $A$. By Lemma~\ref{lem:BV}, and the assumption on $c_f$, we have that
\[
h(\Delta) \leq h(\det(A\overline{A}^T))/2 -(r-s)h(f).
\]
On the other hand, the divisibility from Lemma~\ref{lem:divisibility} above shows that
\[
h(\Delta) \geq \frac{n!^{1/n}n}{n+1}s^{1+1/n} \sum_{i=1}^\ell\left(\frac{1}{\mu_i^{1/n}} - O_n(s^{-1/n})\right).
\]
Write $\kappa = \sum_i \frac{1}{\mu_i^{1/n}}$. Putting these together, we have that
\[
\frac{n!^{1/n}n \kappa }{n+1}s^{1+1/n} - O_n(\ell s)\leq h(\det(A\overline{A}^T))/2 - (r-s)h(f).
\]
The entries of $A$ are monomials of degree $M$ evaluated at points of height $<b$, and so $h(\det(A\overline{A}^T))/2 \leq Ms(b-1)$. Dividing by $Ms$ yields that
\[
\frac{n!^{1/n}n  s^{1/n}\kappa }{(n+1) M} - O_n(\ell/M) \leq b-1 - \frac{r-s}{Ms}h(f).
\]
Now 
\begin{align*}
s &= |B[M]| - |B[M-d]| = \binom{M+n+1}{n+1} - \binom{M-d+n+1}{n+1} \\
&= dM^n/n! + O_n(d^2M^{n-1}).
\end{align*}
So by using that $d^2/M = O_n(1)$, we have that $s^{1/n}/M = d^{1/n}/n!^{1/n} + O_n(d^2/M)$. Therefore the left-hand side can be replaced by
\[
\frac{d^{1/n}n}{n+1} \kappa - O_n\left(\frac{\ell d^2}{M}\right).
\]
For the right-hand side we have that 
\[
\frac{r-s}{Ms} 
= \frac{1}{d(n+1)} + O_n({1/M}),
\]
and so the inequality becomes
\[
\frac{d^{1/n}n}{n+1}\kappa - O_n\left(\frac{\ell d^2}{M}\right) \leq b-1 - \frac{h(f)}{d(n+1)} + O_n\left(\frac{h(f)}{M}\right).
\] 
Rearranging, we obtain that
\begin{equation}\label{eq:kappa.final.bound}
\kappa 
\leq \frac{(n+1)(b-1)}{nd^{1/n}} - \frac{h(f)}{nd^{1+1/n}} + O_n\left(\frac{h(f)}{d^{1/n}M}\right) + O_n\left(\frac{\ell d^{2-1/n}}{M}\right).
\end{equation}
We now take $M$ of size 
$O_n(d^{3}\ell+ d^{2}b)$, 
where the implicit constant will be determined later. We distinguish two cases depending on $h(f)$. Assume first that $h(f)\leq 2(n+1)db$. 
By taking the implicit constant for $M$ large enough, we may arrange for the last two terms in Equation~\eqref{eq:kappa.final.bound} to satisfy
\[
O_n\left(\frac{h(f)}{d^{1/n}M}\right) < \frac{1}{2nd^{1+1/n}}, \quad O_n\left(\frac{\ell d^{2-1/n}}{M}\right)< \frac{1}{2nd^{1+1/n}}.
\]
We then obtain that
\[
\kappa < \frac{(n+1)(b-1)}{nd^{1/n}} - \frac{h(f)-1}{nd^{1+1/n}},
\]
as desired. Second, assume that 
$h(f)\geq 2(n+1)db$. By adjusting the implicit constant for $M$, we then have in Equation~\eqref{eq:kappa.final.bound} that
\[
O_n\left(\frac{h(f)}{d^{1/n}M}\right) \leq \frac{h(f)}{2nd^{1+1/n}}, \quad O_n\left(\frac{\ell d^{2-1/n}}{M}\right) < 1/d,
\]
which leads to a contradiction as $\kappa$ is a positive number 
\[
\kappa 
\le 
\frac{(n+1)(b-1)}{nd^{1/n}} - \frac{2(n+1)db}{nd^{1+1/n}} + \frac{(n+1)db}{nd^{1+1/n}} + \frac{1}{d}
< 0.
\]
In conclusion, we obtain the desired contradiction by taking
\[
\kappa \geq \frac{(n+1)(b-1)}{nd^{1/n}} - \frac{h(f)-1}{nd^{1+1/n}}
\]
and $M$ of size $O_n(d^{1+1/n}b + d^3\ell)$.

%

\end{proof}

Using a technique due to Ellenberg--Venkatesh~\cite{Ellenb-Venkatesh}, we obtain the following affine variant. Note that the proof in this case is much simpler than in e.g.\ ~\cite[Prop.\,4.2.1]{CCDN-dgc}, by the ubiquity of height one primes in $\cO_K$.

Let $ i\ge 0$. 
For $f\in \cO_K[x_1, \ldots, x_{n+1}]$, we denote by $f_i\in \cO_K[x_1,\ldots, x_{n+1}]$ its degree $i$ homogeneous part, which is a homogeneous polynomial.

\begin{proposition}[Affine auxiliary polynomials]\label{prop:aff.aux.poly}
Let $f\in \cO_K[x_1, \ldots, x_{n+1}]$ be primitive, absolutely irreducible of degree $d$, and let $X\subset \AA^{n+1}_K$ be the affine variety cut out by $f$. Let $b$ be a positive integer and take positive integers $\mu_1, \ldots, \mu_\ell$ for which
\[
\sum_{i=1}^\ell \frac{1}{\mu_i^{1/n}}\geq  \frac{b-1}{d^{1/n}} - \frac{h(f_d)-1}{nd^{1+1/n}}.
\]
Let $p_1, \ldots, p_\ell$ be primes. 
Then if $P=(P_1,\ldots,P_\ell)\in \prod X_{p_i}(\CC)$ is a tuple where each $P_i$ is of multiplicity $\mu_i$, then there exists a polynomial 
\[
g:=g_{P}\in \cO_K[x_1, \ldots, x_{n+1}]
\]
coprime to $f$, of degree $O_n(d^{1+1/n}b + d^3\ell )$, and vanishing on
\[
X(b; P_1, \ldots, P_{\ell}) = \{x\in X(b): x\equiv P_i \bmod p_i\}.
\]
\end{proposition}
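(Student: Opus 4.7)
The plan is to adapt the argument of Theorem~\ref{thm:proj.aux.poly} to the affine setting. The two new ingredients are: (i) a coordinate normalization that replaces $h(f)$ by $h(f_d)$ in the estimates, and (ii) an Ellenberg--Venkatesh--type argument exploiting the abundance of height-one primes $t-\lambda$ in $\cO_K=\CC[t]$ to tighten the determinant bound by the factor $\tfrac{n+1}{n}$ characteristic of affine dimension growth. The Ellenberg--Venkatesh ingredient bypasses the sieve-theoretic prime-selection of the number-field proof of~\cite[Prop.\,4.2.1]{CCDN-dgc}.

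First, I would apply an affine $\CC$-linear change of variables $x_i\mapsto x_i+a_ix_{n+1}$ for $i=1,\ldots,n$, with $(a_1,\ldots,a_n)\in\CC^n$ chosen via Corollary~\ref{cor: existence of C points} so that $h(f_d(a_1,\ldots,a_n,1))=h(f_d)$. This is an isomorphism of $X(b)$ and arranges that the coefficient $c_f$ of $x_{n+1}^d$ in the new $f$ satisfies $h(c_f)=h(f_d)$.

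Next, for $M=O_n(d^{1+1/n}b+d^3\ell)$ (with constant to be pinned down), form the matrix $A$ whose rows index a maximal algebraically-independent subset of $X(b;P_1,\ldots,P_\ell)$ and whose columns index the monomials of degree $\le M$ in $x_1,\ldots,x_{n+1}$. Assume for contradiction that no degree-$M$ polynomial coprime to $f$ vanishes on $X(b;P_1,\ldots,P_\ell)$; then $\ker A=f\cdot B[\le M-d]$ and $s=\binom{M+n+1}{n+1}-\binom{M-d+n+1}{n+1}$, $r=\binom{M+n+1}{n+1}$. Applying Lemma~\ref{lem:divisibility} gives the same lower bound $h(\Delta)\ge\kappa\cdot\tfrac{n!^{1/n}n}{n+1}s^{1+1/n}-O_n(\ell s)$ as in the projective case, where $\Delta$ is the gcd of $s\times s$ minors of $A$ and $\kappa=\sum_i\mu_i^{-1/n}$. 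The crux is the improved upper bound: since $c_f\ne 0$ with $h(c_f)=h(f_d)$, the $s$ monomials $x_1^{a_1}\cdots x_{n+1}^{a_{n+1}}$ with $a_{n+1}<d$ form a Gr\"{o}bner complement to $f\cdot B[\le M-d]$, so the corresponding $s$ columns give an invertible submatrix of $A$. For such a column the maximum entry height is at most $(a_1+\cdots+a_{n+1})(b-1)$, and a direct symmetry computation (averaging over the $n$ unconstrained exponents $a_1,\ldots,a_n$ while $a_{n+1}\in\{0,\ldots,d-1\}$) yields
\[
\sum_{\text{selected cols}}\sum_{i=1}^{n+1}a_i=\tfrac{n}{n+1}Ms+O_n(d^2M^n),
\]
improving the projective sum $Ms$ by the decisive factor $\tfrac{n}{n+1}$. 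Combined with Bombieri--Vaaler (Lemma~\ref{lem:BV}) and the $c_f$ normalization, this gives
\[
h(\Delta)\le\tfrac{n}{n+1}Ms(b-1)-(r-s)h(f_d)+O_n(d^2M^n(b-1)).
\]

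Combining the lower and upper bounds on $h(\Delta)$, dividing by $Ms$, and using $s^{1/n}/M=d^{1/n}/n!^{1/n}+O_n(d^2/M)$ together with $(r-s)/(Ms)=1/(d(n+1))+O_n(1/M)$, one deduces, after multiplying by $\tfrac{n+1}{nd^{1/n}}$, that $\kappa\le\tfrac{b-1}{d^{1/n}}-\tfrac{h(f_d)}{nd^{1+1/n}}+O_n(\text{negligible})$. Choosing the implicit constant for $M$ large enough makes the negligible term smaller than $\tfrac{1}{nd^{1+1/n}}$, contradicting the hypothesis $\kappa\ge\tfrac{b-1}{d^{1/n}}-\tfrac{h(f_d)-1}{nd^{1+1/n}}$. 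The hard part is the combinatorial symmetry computation, since the Gr\"{o}bner complement is not symmetric in $x_1,\ldots,x_{n+1}$ (the variable $x_{n+1}$ is singled out by the constraint $a_{n+1}<d$); the factor $\tfrac{n}{n+1}$ arises precisely because for $M\gg d$ the leading-order contribution comes from the $n$ unconstrained variables $x_1,\ldots,x_n$. The Ellenberg--Venkatesh role is to ensure one can freely pick height-one primes $p_i$ in $\cO_K$ for the prescribed reductions, which is automatic by the infinitude of the $t-\lambda$ with $\lambda\in\CC$.
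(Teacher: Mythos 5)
Your proposal takes a genuinely different route from the paper, but it contains a real gap. The paper does not re-run the determinant argument; instead it reduces Proposition~\ref{prop:aff.aux.poly} to Theorem~\ref{thm:proj.aux.poly} via the Ellenberg--Venkatesh weighted homogenization: one fixes $H\in\cO_K$ of height $b-1$ with $H\nmid f_0$ (after translating so $f_0\ne 0$) and sets $F(x_0,\ldots,x_{n+1})=\sum_{i=0}^dH^if_ix_0^{d-i}$. Each affine point $x$ of height $<b$ on $X$ lifts to the projective point $(H:x_1:\cdots:x_{n+1})$ of height $<b$ on $V(F)$, and $h(F)\ge d(b-1)+h(f_d)$. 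Plugging this into the projective threshold $\frac{(n+1)(b-1)}{nd^{1/n}}-\frac{h(F)-1}{nd^{1+1/n}}$ produces exactly the affine threshold $\frac{b-1}{d^{1/n}}-\frac{h(f_d)-1}{nd^{1+1/n}}$ — both the $\frac{n}{n+1}$ saving and the $h(f_d)$ term come out in one step. This is the Ellenberg--Venkatesh trick; it is not, as you state, about choosing height-one primes $p_i$ (those are given in the statement).

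The gap in your approach is the claimed upper bound $h(\Delta)\le\frac{n}{n+1}Ms(b-1)-(r-s)h(f_d)+O_n(\cdot)$. The Bombieri--Vaaler estimate (Lemma~\ref{lem:BV}) reads $h(\Delta)\le h(\det(A\overline A^T))/2-(r-s)h(\text{smallest kernel vector})$, and $h(\det(A\overline A^T))/2$ is the Pl\"ucker height of the row space of $A$, i.e.\ $\max_S\deg\det(A_S)$ over all $s$-element column subsets $S$ — not the degree of the particular Gr\"obner-complement minor. In the affine setting the monomials have degrees up to $M$, and the $s$ highest-degree monomials have total degree $\approx Ms$ (they all have degree in $[M-O_n(d),M]$); that subset generically gives a nonzero minor, so $\max_S\deg\det(A_S)\approx Ms(b-1)$, not $\frac{n}{n+1}Ms(b-1)$. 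Your Gr\"obner-complement combinatorics, which does correctly give $\sum_{j\in S_{\mathrm{Gr}}}\deg m_j=\frac{n}{n+1}Ms+O_n(d^2M^n)$, yields only the \emph{direct} divisibility bound $h(\Delta)\le\deg\det(A_{S_{\mathrm{Gr}}})\le\frac{n}{n+1}Ms(b-1)+O_n(\cdot)$ — but this forfeits the $(r-s)h(f_d)$ subtraction, since that subtraction comes from the kernel-height argument via Lemma~\ref{lem:BV}, which is anchored to $\max_S$ rather than your chosen $S$. The two savings cannot be stacked as written; this is precisely why the paper routes through the projective theorem instead.
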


\begin{proof}
By Corollary~\ref{cor: existence of C points} there exists a point $a\in \CC^{n+1}$ which does not lie in $X(1)$.
By considering $f(x+a)$, we may assume that $f_0$ is non-zero.
Take an element $H\in \cO_K$ of height $b-1$  which does not divide $f_0$. Consider the primitive homogeneous polynomial 
\[
F(x_0, x_1, \ldots, x_{n+1}) = \sum_{i=0}^d H^i f_i x_0^{d-i}.
\]
We note that $F$ is absolutely irreducible because $x_0\nmid F$ and $F(H, x_1, \ldots, x_{n+1}) = H^df(x_1, \ldots, x_{n+1})$, and that $h(F)\geq d(b-1) + h(f_d)$. Every point $(x_1, \ldots, x_{n+1})$ on $X$ gives a point $(H : x_1 : \ldots : x_{n+1})$ on $V(F)$. By applying Theorem~\ref{thm:proj.aux.poly} to $F$ we get the desired result. 
\end{proof}

\subsection{Counting on curves}

Using Theorem~\ref{thm:proj.aux.poly} and Proposition~\ref{prop:aff.aux.poly} we can quickly deduce results on counting on projective and affine  curves. 

%

\begin{proof}[Proof of Theorem~\ref{thm: projective curves}]
By Lemma~\ref{lem: projection for proj varieties} we may assume that $n=2$, i.e.\ $C$ is a planar curve.
We may moreover assume that $C$ is geometrically integral, by Lemma~\ref{lem:irre.vs.abs.irre}. 
Put $\ell = \left\lfloor \frac{2(b-1)}{d}\right\rfloor +1$. 
Let $Z_1, \ldots, Z_k$ be the irreducible components of $C(b)$ of maximal dimension.
If $Z_j$ is completely contained in the singular locus  $C^{\mathrm{sing}}(\CC(t))$, then $Z_j$ is a finite set 
 and hence we are done. 
Otherwise we fix on every $Z_j$ a point $Q_j$ which is smooth as a point on $C$. 
Since being smooth and being geometrically integral are generic conditions, there exist non-zero primes $p_1, \ldots, p_\ell\in \cO_K$ such that each reduction $C_{p_i}$ is geometrically integral,  and such that for every $j$ the point $Q_j$ reduces to a smooth point on every $C_{p_i}$.
Let $Y\subset C(b)$ be the subset of points $P\in C(b)$ which reduce to a smooth point on every $C_{p_i}$, and note that $Q_j\in Y$ for every $j$.
Then combining Theorem~\ref{thm:proj.aux.poly} with all $\mu_i = 1$ and B\'ezout's theorem shows that the morphism
\[
Y\to \prod_i C_{p_i}
\]
is finite-to-one onto its image, and has degree bounded by $O(d^3b)$. 
Now, for each irreducible component $Z_j$ the set of $Q\in Z_j$ which reduce to a smooth point on every $C_{p_i}$ form an open subset of $Z_j$.
It follows that $Y$ is dense in every $Z_j$.
The result now follows by noting that $Y$ has at most $O(d^3b)$ irreducible components of dimension $\ell = \dim(\prod_i C_{p_i})$.
\end{proof}


\begin{proof}[Proof of Theorem~\ref{thm: affine curves}]
The argument is identical to the previous corollary, but using Proposition~\ref{prop: projection for affine varieties} instead of Lemma~\ref{lem: projection for proj varieties}, and Proposition~\ref{prop:aff.aux.poly} instead of Theorem~\ref{thm:proj.aux.poly}.
\end{proof}

\section{Geometric dimension growth}\label{sec:geom.dim.growth}

In this section we prove our main results on dimension growth. The projective case will be deduced from the affine case, which we will prove by induction on dimension. The base case for Theorem~\ref{thm:main.dgc} is counting on affine surfaces.  

For the affine case, we will prove a dimension growth result which is more general than what is needed for Theorem~\ref{thm:main.dgc}, similar to~\cite{Vermeulen:affinedg, CDHNV}.  

We embed $\AA^n_K$ in $\PP^n_K$ where the hyperplane at infinity $H_\infty$ is given by $x_0 = 0$. 
If $X\subset \AA^n_K$ is an affine variety, we denote by $\overline{X}$ the Zariski closure of $X$ in $\PP^n$.
Let $X_\infty = \overline{X}\cap H_\infty$, which is a projective variety in $\PP^{n-1}_K$.
If $X$ is defined by a degree $d$ polynomial $f\in K[x_1, \ldots, x_n]$, then $X_\infty$ is defined by $f_d = 0$ in $\PP^{n-1}_K$.

\begin{theorem}[Dimension growth for affine varieties]\label{thm:aff.dim.growth}
Let $X\subset \AA^n_K$ be an irreducible affine variety of dimension $m$ and of degree $d\geq 6$.   
Assume that $X_\infty$ has at least one geometrically irreducible component of degree at least $3$. Then for all $b\geq 1$
\[
\Naff(X,b)\leq (m-1)b + 1,
\]
and the number of irreducible components of $X(b)$ of dimension $(m-1)b+1$ is at most $O(d^7)$.
For $b\geq 7$ this number of irreducible components is at most  $O(d^4)$.
\end{theorem}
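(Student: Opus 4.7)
My plan is to prove Theorem~\ref{thm:aff.dim.growth} by induction on $m=\dim X$, with the base case $m=2$ supplied by Proposition~\ref{prop:aff.surfaces}. The inductive step combines the affine determinant method of Proposition~\ref{prop:aff.aux.poly} with the inductive hypothesis applied to hyperplane-like slices of $X$.

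For the setup in the inductive step ($m\ge 3$), I would first reduce to the hypersurface case via Proposition~\ref{prop: projection for affine varieties}. Since the projection is centered at a $\CC$-point of the hyperplane at infinity, it restricts to a birational map $X_\infty \to Y_\infty$ (the projection from a point of $L$ is injective on $L\setminus\{p\}$), so the hypothesis that $X_\infty$ has a geometrically irreducible component of degree $\ge 3$ transfers to the projected hypersurface. We may therefore assume $X = \{f=0\} \subset \AA^{m+1}_K$ with $f \in \cO_K[x_1,\ldots,x_{m+1}]$ primitive of degree $d$. I would then choose $\ell$ primes $p_1,\ldots,p_\ell \in \cO_K$ of height one (of the form $t - \lambda_i$, $\lambda_i \in \CC$) with $\ell$ slightly larger than $(b-1)/d^{1/m}$, picked generically so that every reduction $X_{p_i}$ is geometrically integral. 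For each tuple $P=(P_1,\ldots,P_\ell)$ of smooth $\CC$-points on the reductions, Proposition~\ref{prop:aff.aux.poly} (with $\mu_i=1$) then yields a polynomial $g_P$ coprime to $f$ of degree $O_m(d^{1+1/m}b + d^3 \ell) = O_m(d^3 b)$ vanishing on the congruence class $X(b;P_1,\ldots,P_\ell)$. The exceptional locus of points reducing into singular loci of some $X_{p_i}$ lies on a lower-dimensional subvariety, to be absorbed using Proposition~\ref{prop: affine geometric SZ} and Lemma~\ref{lem:irre.vs.abs.irre}.

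Next, I would analyze the intersections $V(g_P)\cap X$, which are of dimension $m-1$ and degree $O_m(d^4 b)$ by Bezout. Decomposing into irreducible components $Y$, any component whose part at infinity $Y_\infty$ still contains a geometrically irreducible component of degree $\ge 3$ is amenable to the inductive hypothesis, giving $\Naff(Y, b) \le (m-2)b + 1$ and $O((d^4 b)^7)$ (respectively $O((d^4 b)^4)$ for $b \ge 7$) top-dimensional components. Combining this fibre-wise dimension bound with the parameter space of tuples $P$, which contributes at most $b$ extra degrees of freedom (the reduction map factors through evaluation at $\lambda_1,\ldots,\lambda_\ell$ of polynomials of degree $< b$, which limits the image dimension when $\ell$ is of order $b/d^{1/m}$), one should arrive at $\Naff(X, b) \le b + ((m-2)b + 1) = (m-1)b + 1$. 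The multiplicative count of top-dimensional components through Bezout then gives the $O(d^7)$ (respectively $O(d^4)$) bound after tracking the constants carefully.

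The main obstacle will be twofold. First, the inductive hypothesis does not directly apply to components $Y$ of $V(g_P) \cap X$ whose $Y_\infty$ only contains components of degree $\le 2$; these exceptional slices must be handled via the companion Theorem~\ref{thm:aff.dgc.Siegel}, which provides a parallel Siegel-type bound without the degree-$\ge 3$ hypothesis, so the induction should really be run simultaneously on both theorems. Second, attaining the sharp polynomial-in-$d$ exponent ($7$ in general, improving to $4$ for $b \ge 7$) in the number of top-dimensional components requires a delicate balancing of the two terms $d^{1+1/m}b$ and $d^3\ell$ in the degree bound of Proposition~\ref{prop:aff.aux.poly}, in the spirit of~\cite{CCDN-dgc, Walsh}; the threshold $b \ge 7$ arises because for small $b$ one cannot afford to take $\ell$ large enough to eliminate the $d^{1+1/m}b$ term from dominating.
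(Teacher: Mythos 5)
Your proposal diverges significantly from the paper's proof, and it contains a genuine gap that would prevent it from closing.

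The paper's inductive step for $m\geq 3$ is \emph{purely a slicing argument}: after reducing to a geometrically integral hypersurface via Proposition~\ref{prop: projection for affine varieties}, the paper uses Lemma~\ref{lem:Bertini.aff} to find a single hyperplane $H$ over $\CC$ so that $X\cap H$ is geometrically integral with a component of degree $\geq 3$ at infinity, fibres $X(b)$ over the $\AA^1_K(b)$-worth of translates $H_c$, and applies the inductive hypothesis to almost every fibre. This costs exactly $b$ dimensions for the family of slices, giving $b + ((m-2)b+1) = (m-1)b+1$, and crucially the degree of each slice stays at $d$, so the component count propagates unchanged. The determinant method appears \emph{only} in the base case $m=2$ (Proposition~\ref{prop:aff.surfaces}).

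Your plan instead invokes Proposition~\ref{prop:aff.aux.poly} at every level of the induction, and this breaks in two ways. First, the parameter space $\prod_{i=1}^\ell X_{p_i}$ has dimension $m\ell$ (each reduction is an $m$-dimensional hypersurface), not $b$ as you assert; with $\ell$ of order $b/d^{1/m}$, this gives $m\ell \approx mb/d^{1/m}$, which for small $d$ and larger $m$ exceeds the $b$ that your dimension budget can afford. The delicate balancing act in Lemma~\ref{lem:bound.ell.ell'}, which shows $2\ell+\ell' \leq b+1$, is specific to $m=2$ and does not have an obvious analogue for $m\geq 3$. Second, you correctly observe that $V(g_P)\cap X$ has degree $O_m(d^4 b)$, so applying the inductive hypothesis to its components yields a component count of order $(d^4 b)^{O(1)}$ — the $b$-dependence does not disappear ``after tracking constants carefully.'' The theorem's $O(d^7)$ bound is uniform in $b$ precisely because the paper avoids recursion through auxiliary hypersurfaces of $b$-dependent degree: all the $b$-dependence is confined to the surface base case, where Lemma~\ref{lem:bound.ell.ell'} shows the determinant-method contribution is strictly subdominant for $b\geq 7$, leaving only the $O(d^4)$ lines/conics. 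Your fallback to Theorem~\ref{thm:aff.dgc.Siegel} for slices without a degree $\geq 3$ component at infinity also does not help the component count, since that theorem (built on Proposition~\ref{prop:curve.siegel}) controls only the dimension, not the number of components.
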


\begin{remark}\label{rem:NCC}
Note that the condition that $X$ is not cylindrical over a curve (abbreviated NCC) appearing in the main affine results of \cite{Vermeulen:affinedg,CDHNV}
holds if $X_\infty$ has at least one geometrically irreducible component of degree at least $3$ as above. Indeed, a hypersurface $X= V(f)$ is NCC if it is not the preimage of a planar curve under a $K$-linear map. 
 The condition on $X_\infty$ guarantees $f_d$ is not a form in two variables after any $K$-linear change of coordinates. 
\end{remark}
%

\subsection{Surfaces}

We first count on affine surfaces. The goal of this section is to prove the following proposition:

\begin{proposition}\label{prop:aff.surfaces}
Let $f\in \cO_K[x,y,z]$ be an absolutely irreducible polynomial of degree $d\geq 6$, and let $X = V(f)\subset \AA^3_K$ be the corresponding surface. Assume that 
 $f_d$ has at least one absolutely irreducible factor of degree at least $3$. Then for $b\geq 1$ we have
\[
N_{\mathrm{aff}}(X,b) \leq b+1.
\]
For $b\geq 7$, the number of irreducible components of $X(b)$ of dimension $b+1$ is at most $O(d^4)$, while for $b<7$ this number is $O(d^7)$.

Moreover, if $f_d$ is absolutely irreducible and $U\subset X$ is the open subset obtained by removing all lines from $X$, then
\[
N_{\mathrm{aff}}(U,b)\leq \max\left\{\frac{b+1}{2}, \left(\frac{3}{2\sqrt{d}} + \frac{1}{3}\right)(b-1)+\frac{5}{2}\right\}.
\]
\end{proposition}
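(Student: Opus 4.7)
The plan is to combine the affine auxiliary polynomial construction of Proposition~\ref{prop:aff.aux.poly} with the affine curve bound of Theorem~\ref{thm: affine curves}, treating the lines contained in $X$ separately. Write $L\subset X$ for the union of all lines contained in $X$ and $U = X\setminus L$; thus $X(b) = L(b)\cup U(b)$. Since $X$ is irreducible of degree $d\ge 6$ it contains no plane, so $L$ is either a finite union of lines or is ruled by a one-parameter family of lines; applying Lemma~\ref{lem:linear.spaces} line by line yields $\dim L(b) \le b+1$, together with the $O(d^4)$ bound on top-dimensional components coming from the classical bound on the number of one-parameter families of lines on a degree $d$ surface. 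Consequently, the first and second claims of the proposition reduce to bounding $\dim U(b)$, which is exactly the content of the third claim.

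To bound $\Naff(U,b)$, fix a non-negative integer $\ell$ and choose distinct height-one primes $p_i = t-\lambda_i$ generically (via Corollary~\ref{cor: existence of C points}) so that every reduction $X_{p_i}$ is a geometrically integral surface over $\CC$. For each tuple $P=(P_1,\ldots,P_\ell)$ of smooth $\CC$-points $P_i\in X_{p_i}$, Proposition~\ref{prop:aff.aux.poly} with $\mu_i=1$, whose hypothesis reads
\[
\ell \ge \frac{b-1}{\sqrt d} - \frac{h(f_d)-1}{2d^{3/2}},
\]
produces a polynomial $g_P$ coprime to $f$ of degree $M = O(d^{3/2}b + d^3\ell)$ vanishing on $X(b;P)$. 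The auxiliary curve $C_P := X\cap V(g_P)$ has degree at most $dM$, and because $U$ avoids the lines of $X$, every irreducible component of $C_P\cap U$ has degree at least $2$. Theorem~\ref{thm: affine curves} applied componentwise then gives $\dim (C_P\cap U)(b) \le \lceil b/2 \rceil$, with the sharper refinement $b/\deg(C)+O(1)$ for each specific component $C$.

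For an irreducible component $Z$ of $U(b)$, stratify according to the dimension of the image of the reduction morphism $\rho : Z \to \prod_i X_{p_i}$. In each stratum the generic fibre lies inside $(C_P\cap U)(b)$, so that $\dim Z$ is at most the image dimension plus the fibre bound from the preceding paragraph. Optimising $\ell$ against these estimates yields the two terms of the displayed maximum: the value $\ell = 0$ (available when $h(f_d)$ is sufficiently large) is responsible for the $(b+1)/2$ term, while taking $\ell$ minimal positive, with $\ell \asymp b/\sqrt d$, produces the $(3/(2\sqrt d)+1/3)(b-1)+5/2$ term after carefully balancing the $d^3\ell$ contribution to $M$ against the componentwise refinement of Theorem~\ref{thm: affine curves}. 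The count of $(b+1)$-dimensional components of $X(b)$ is then obtained from the same auxiliary curve by a B\'ezout argument applied to $X\cap V(g_P)$, giving $O(d\cdot M) = O(d^4)$ for $b\ge 7$ where $\ell = O(1)$ suffices, and $O(d^7)$ for $b<7$ where larger $\ell$ is forced.

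The main obstacle will be the simultaneous optimisation over $\ell$ and the stratum dimension of $\rho(Z)$ in the third paragraph that extracts the explicit constants $3/(2\sqrt d)$ and $1/3$ in the second term of the maximum. In particular one must (i) track the $d^3\ell$ contribution to $M$ through the componentwise version of Theorem~\ref{thm: affine curves}, (ii) absorb into the line locus $L$ the residue classes at which $g_P$ becomes degenerate so that $C_P$ acquires low-degree components other than lines of $X$, via a generic-prime argument, and (iii) deduce the first claim in the weaker setting where $f_d$ only has an absolutely irreducible factor of degree $\ge 3$ by projecting to reduce to the case where $f_d$ is absolutely irreducible.
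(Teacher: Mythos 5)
Your overall architecture—separate off the line locus, then run the affine determinant method, then feed the auxiliary curve into Theorem~\ref{thm: affine curves}—is the right starting point, but there is a genuine quantitative gap that a single application of the determinant method cannot close. With $\mu_i=1$ the hypothesis of Proposition~\ref{prop:aff.aux.poly} forces $\ell \gtrsim (b-1)/\sqrt d$ unless $h(f_d)$ is very large, the image of the reduction map lies in $\prod_i X_{p_i}$ which has dimension $2\ell$, and the best your curve bound gives for a degree-$e$ component is $\lceil b/e\rceil$. Since you have removed only the lines you can only guarantee $e\ge 2$, so the resulting estimate is $2\ell + \lceil b/2\rceil \approx (2/\sqrt d + 1/2)\,b$, which for $d=6$ is about $1.3\,b$—strictly worse than the claimed $b+1$. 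Removing conics as well (as the paper does via Lemma~\ref{lem:conics.aff.surface}) only improves this to $2\ell + \lceil b/3\rceil \approx 1.15\,b$, still too large. What the paper's proof actually uses is a \emph{second} layer of the determinant method applied to each degree-$e\ge 3$ component $D$ of the auxiliary curve, with $\ell'$ additional primes, combined with the key fact (from Lemma~\ref{lem:tangent.mult}, after excising points of high tangent-plane intersection multiplicity) that $\mult_{P_i}(D_{p_i})<e$ for each of the original $\ell$ primes. This strict inequality feeds the $\mu_i$-dependence of Proposition~\ref{prop:aff.aux.poly} and produces the crucial subtractive term $\ell/(e-1)$ in $\ell'(e)$, which Lemma~\ref{lem:bound.ell.ell'} turns into $2\ell + \ell'(e)\le b+1$. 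Your proposal contains neither the second determinant application nor any mechanism to exploit multiplicities below $e$, so the displayed constants cannot be recovered.

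Two further points. First, the reduction of the $b+1$ bound to the $U$-bound ``by projecting to make $f_d$ absolutely irreducible'' does not make sense here: $X$ is already a surface in $\AA^3_K$, so there is no room to project further, and the two hypotheses ($f_d$ absolutely irreducible vs.\ $f_d$ merely having an absolutely irreducible factor of degree $\ge 3$) give genuinely different conclusions; the paper handles the weaker hypothesis directly. Second, your bound $\dim L(b)\le b+1$ with $O(d^4)$ top-dimensional components is asserted from a ``classical bound on one-parameter families of lines,'' but this is not a black box: if $X$ is ruled the naive Schwartz--Zippel bound only gives $\dim L(b)\le 2b$, and the paper's Lemma~\ref{lem:lines.aff.surface} requires a delicate argument parametrising line directions by rational points on $X_\infty$, using the ruling structure of Lemma~\ref{lem:ruled}, and applying Theorem~\ref{thm: projective curves} to the degree-$\ge 3$ component at infinity to get $\dim Y_k \le b+1$ stratum by stratum. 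You would need to reproduce this argument, not appeal to a classical fact.
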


For the proof, we will need a result on rulings of surfaces by curves of degree $1$ and $2$.
First of all let us state what we mean by a ruling.
By a \emph{conic} in $\AA^3_K$ we mean a curve of degree $2$.
Note that every conic must lie on a plane.

\begin{definition}
	Let $X \subset \PP_K^3$ be a projective surface.
	\begin{enumerate}
	\item We say that $X$ is \emph{ruled} if there exist infinitely many distinct lines contained in $X$.
	\item We say that $X$ is \emph{ruled tangentially by conics} if there exist infinitely many distinct conics contained in $X$ which are tangent to $H_\infty$.
\end{enumerate}	
In the above context, by referring to the  \textit{ruling} $\mathcal{R}$, we mean the respective sets of lines and conics satisfying the given conditions.
\end{definition}

Note that in the above definition the conics may be degenerate, i.e.\ they can be a double line. 

\begin{lemma}\label{lem:ruled}
	Let $X \subset \PP_K^3$ be a projective, geometrically integral surface of degree $d\geq 2$ and suppose 
	that $X$ is ruled or tangentially ruled by conics with a ruling $\mathcal{R}$.
	Assume that $X_\infty$ has at least one geometrically irreducible component $E$ of degree at least $3$.
	Then the following hold. 
	\begin{enumerate}
	\item \label{it:E.only} $E$ is the unique geometrically irreducible component of $X_\infty$ of degree at least $3$. 
	\item \label{it:every.x} For every $x\in X$ there exists a $C_x\in \cR$ containing $x$ which intersects $X_\infty$ in $E$.
	\item \label{it:not.cone} For every $e\in E$ there are only finitely many $C \in \mathcal{R}$ 
	passing through $e$.
	\item \label{it:fin.not.ruling} There are only finitely many 
$C \in \mathcal{R}$ 	 which do not intersect $E$.
\end{enumerate}		
\end{lemma}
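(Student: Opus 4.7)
The plan is to reduce everything to one irreducible $1$-dimensional subfamily $T\subseteq\overline{\cR}$ that sweeps out $X$, together with the rational map $\phi: T \to X_\infty$ sending $C$ to the reduced support of $C\cap H_\infty$ (which is a single point outside the finite locus of curves contained in $H_\infty$: a transverse point for lines, the point of tangency for tangent conics). The whole argument will hinge on identifying the image of $\phi$ with $E$.

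First I would extract such a $T$: since $\cR$ is infinite and sits inside the Hilbert scheme of lines or of conics on $X$ (which has only finitely many irreducible components), some component $T$ of $\overline{\cR}$ has positive dimension. A dimension count then forces $\bigcup_{C\in T} C$ to be $2$-dimensional and hence to equal $X$, because any proper $1$-dimensional closed subvariety of $X$ contains only finitely many curves of degree $\le 2$. Taking $T$ proper (closure in the Hilbert scheme), the incidence variety $I=\{(x,C):x\in C\}$ is proper over $X$, so every $x\in X$ lies on some $C\in T$. The subset $T^{H_\infty}\subseteq T$ parametrising curves contained in $H_\infty$ is closed and proper, hence finite, and on its complement $\phi$ is a genuine morphism whose image is irreducible and therefore lands in a single irreducible component $E_i$ of $X_\infty$.

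The main step, and where I expect the obstacle, is showing $E_i=E$. First I would rule out $\phi$ being constant: if every non-degenerate $C\in T$ passes through a single point $P\in X_\infty$, then in the line case $X$ becomes a cone with vertex $P\in H_\infty$, whose section by $H_\infty$ decomposes as a union of at most $d$ lines through $P$, contradicting the existence of a component of $X_\infty$ of degree $\ge 3$. The tangent conic case is essentially the same: the sweep by non-degenerate conics meets $H_\infty$ only in $\{P\}$, so every $1$-dimensional piece of $X_\infty$ must come from degenerate limit members of $T$ contained in $H_\infty$, which are curves of degree $\le 2$ in $H_\infty$, again contradicting $\deg E\ge 3$. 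Once $\phi$ is known to be dominant onto $E_i$, surjectivity of $I\to X$ shows that each $x\in X$ lies on some $C\in T$ with either $C\cap H_\infty\subseteq E_i$ or $C\subset H_\infty$; curves of the latter kind are finite in number and contribute only components of $X_\infty$ of degree $\le 2$. Hence every irreducible component of $X_\infty$ is either $E_i$ or has degree $\le 2$, and $\deg E\ge 3$ forces $E=E_i$ and uniqueness, which is (1).

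Parts (2)--(4) then follow quickly from this structure. For (2), given $x\in X$, pick any $C\in T$ through $x$: if $C\not\subset H_\infty$ then $C\cap H_\infty$ is a single point on $E$, while if $C\subset H_\infty$ then $C$ is a curve of degree $\le 2$ in $H_\infty\cong\PP^2_K$ and meets $E$ by B\'ezout, as $\deg E\ge 3$. For (3), $\phi|_T: T\to E$ is a dominant morphism of irreducible projective curves, hence finite with finite fibres; summing over the finitely many positive-dimensional irreducible components of $\overline{\cR}$, the finitely many curves of each $T^{H_\infty}$, and the finitely many $0$-dimensional components of $\overline{\cR}$ yields (3). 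For (4), the same analysis shows that every $C$ in a positive-dimensional component meets $E$, so curves disjoint from $E$ can only lie in the finitely many $0$-dimensional components of $\overline{\cR}$.
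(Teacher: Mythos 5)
Your overall strategy is close in spirit to the paper's (both rest on completeness of the parameter space of lines or conics, an incidence correspondence, and a degree count on $X_\infty$), but the detailed structure is genuinely different and worth comparing. The paper fixes at the outset a component $D$ of $X_\infty$ met by infinitely many ruling curves, forms the two--point incidence variety $T_{\mathrm{paper}}=\{(p,q,L)\in X\times D\times Y: p,q\in L\}$, uses properness to get $\pi(T_{\mathrm{paper}})=X$, and then observes that if $D\neq E$, every point of $E\setminus D$ sits on a ruling curve joining $H_\infty$ at two distinct points, hence contained in $X_\infty$ --- producing infinitely many irreducible components of $X_\infty$, a contradiction. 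This argument never needs to know whether the ``direction at infinity'' map is constant. You instead choose a positive-dimensional component $T\subset\overline{\cR}$ in the Hilbert scheme, argue the sweep covers $X$, introduce the map $\phi:T\dashrightarrow X_\infty$, and explicitly rule out $\phi$ being constant via the cone argument; the paper gets the non-degeneracy for free. Both derivations of $E_i=E$ (resp.\ $D=E$) are then the same degree-counting trick.

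There is, however, a real gap in your proof of part~(\ref{it:fin.not.ruling}). Your argument establishes $\phi(T)\subseteq E$ only for the particular component $T$ you chose, namely one whose sweep is all of $X$; you then assert that ``the same analysis shows that every $C$ in a positive-dimensional component meets $E$.'' But for a different positive-dimensional component $T'\subset\overline{\cR}$, the cone argument only applies if the sweep of $T'$ is again all of $X$. For lines this is automatic (a positive-dimensional family of pairwise-distinct lines always sweeps out a surface), but for conics it can fail: if $X$ is singular along a line $\ell\not\subset H_\infty$, then the pencil of double-line conics $2\ell$ (one for each plane through $\ell$) gives a positive-dimensional component of $\overline{\cR}$, each member tangent to $H_\infty$ at $\ell\cap H_\infty$, whose sweep is just $\ell$. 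Your first step ``any proper one-dimensional closed subvariety of $X$ contains only finitely many curves of degree $\le 2$'' is itself false for subschemes, precisely because of such pencils. For this component $T'$ the map $\phi'$ is constant, the cone argument does not apply, and nothing forces $\ell\cap H_\infty$ to lie on $E$; so the claim ``every $C$ in a positive-dimensional component meets $E$'' is unsupported. The paper's route sidesteps this by proving, uniformly, that \emph{any} component $D$ of $X_\infty$ met by infinitely many ruling curves must equal $E$, from which~(\ref{it:fin.not.ruling}) follows because every $C\in\cR$ meets some component of $X_\infty$, and all components other than $E$ are met only finitely often. To salvage your argument you would either need to restrict attention to reduced ruling curves (which suffices for the paper's applications in Lemmas~\ref{lem:lines.aff.surface} and~\ref{lem:conics.aff.surface}) or handle the degenerate pencils separately; as written, the argument for~(\ref{it:fin.not.ruling}), and hence the reduction of~(\ref{it:not.cone}) to the finiteness of $\overline{\cR}$'s components, is incomplete.
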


\begin{proof}
	First of all we construct a projective $K$-variety $Z$ parameterizing conics in $\PP^3$.
	Note that every conic in $\PP^3$ is the intersection of a quadratic form and a hyperplane.
	Hyperplanes in $\PP^3$ are naturally parameterized by the dual space $(\PP^3)^* \cong \PP^3$, say with coordinates $[q_0:q_1:q_2:q_3]$.
	Consider an affine chart $q_i \neq 0$  in $(\PP^3)^*$ for some $ 0 \le i \le 3$.
	We can then write $x_i=-\sum_{j \neq i} \frac{q_j}{q_i} x_j$, so that the intersection of a quadratic form with this hyperplane will be entirely determined by a quadratic form $Q$ in the $x_j$, $j\neq i$. 
	The space of such quadratic forms is isomorphic to $\PP^5$.
	We then get a scheme $Z_i \cong \AA^3 \times \PP^5$.
	We construct $Z$ by gluing together the $X_i$ in the obvious way.
	
	Note that $Z$ comes equipped with a natural map $Z \to \PP^3$, projecting to the hyperplane containing the conic, and that $Z$ is a $\PP^5$-bundle over $\PP^3$, implying that $Z$ is complete.  
	Note also that conics which are in fact double lines are not uniquely parameterized by $Z$, since they are contained in infinitely many hyperplanes.
	
	Next, consider the subvariety $Z_\infty$ of $Z$ consisting of those conics tangent to $H_\infty$. 
	This is clearly a closed condition on $Z$. 
	Let $Y\subset Z_\infty$ be the  subvariety consisting of conics which are contained in $X$. 
	We argue that this is a closed subvariety of $Z$.
	Consider 
	\[
	\cI = \{(x,\gamma)\in \PP^3\times Z_\infty:  x\in \gamma\},
	\]	
	which is clearly a closed subvariety of $\PP^3\times Z_\infty$, and denote by $\pi_1: \cI\to \PP^3$ and $\pi_2: \cI\to Z_\infty$ the coordinate projections.
	Since $\PP^3$ is complete, we see that $\pi_2(\pi_1^{-1}(X)) = Y$ is closed in $Z_\infty$, as desired.
	
	Whether we have a ruling $\mathcal{R}$ by lines or by conics, we must have a geometrically irreducible component $D$ of $X_\infty$ such that infinitely many different curves $C \in \mathcal{R}$ intersect $D$. 	 
	Consider the projective variety 
	\[
	T :=\{ (p, q, L) \in X \times D \times Y : p, q \in L \}.
	\]
	The image of the projection $\pi:T \to X$ must be a closed subset of $X$.
	By assumption, this image contains infinitely many distinct $C \in \mathcal{R}$. 
	As $X$ is geometrically integral, we therefore have that $\pi(T)=X$.
	Hence, for \emph{every} $p \in X$, there is a
curve $C \in \mathcal{R}$ 	
lying on $X$ passing through $D$.
	
	In particular, through every point of $E \setminus D$ there is either a line or conic contained in $X_\infty$ intersecting $D$.
	As $E$ has degree at least $3$, and is geometrically irreducible, this would mean that $X_\infty$ has infinitely many irreducible components, a contradiction.
	Hence $D=E$ for every irreducible component $E$ of $X_\infty$ of degree at least $3$, proving (\ref{it:E.only}), (\ref{it:every.x}) and (\ref{it:fin.not.ruling}).
	
	For (\ref{it:not.cone}), note that if this statement would be false, then there exists some $e\in E$ such that for every point $x\in X$ there exists a line (resp.\ conic) through $e$ and $x$ contained in $X$. 	But then a similar argument as above shows that $X_\infty$ has infinitely many irreducible components.
\end{proof}

We first study the contribution to counting points of bounded height from lines and conics on the surface. 
For lines, we have the following lemma.

\begin{lemma} \label{lem:lines.aff.surface}
Suppose $f \in \mathcal{O}_K[x, y, z]$ is absolutely irreducible of degree $d \ge 2$ and let $X = V(f)\subset \AA^3_K$.
Assume that $f_d$ has at least one absolutely irreducible factor of degree at least $3$. 
Let $b$ be a positive integer, and denote by $Y\subset X$ the union of all lines $\ell$ on $X$ 
such that $\ell(b)$ contains at least two distinct points of $X(b)$. Then $Y(b)$ is a constructible set over $\CC$, we have 
\[
\dim Y(b)\leq b+1,
\]
and the number of irreducible components of $Y(b)$ of dimension $b$ is $O(d^4)$. 
\end{lemma}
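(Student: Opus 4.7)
My first step is to note that $Y(b)$ is constructible over $\CC$: it is the image under projection to the third factor of the constructible incidence set
\[
I = \{(x_1, x_2, x) \in X(b)^3 \mid x_1 \ne x_2,\ \overline{x_1 x_2}\subset X,\ x \in \overline{x_1 x_2}\}.
\]
I would then split the argument based on the dimension of the Fano scheme $F(X)\subset \Gr(2,4)$ of lines on $X$. If $\dim F(X) = 0$, Bezout applied to the $d+1$ degree-$d$ equations cutting $F(X)$ out of $\Gr(2,4)$ gives $\#F(X) = O(d^4)$. Each such line $\ell$ contributes, by Proposition~\ref{prop: affine geometric SZ}, a subset $\ell(b)$ of dimension $\le b$ with at most one dim-$b$ irreducible component (since $\deg \ell = 1$). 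Hence $\dim Y(b) \le b$ and $Y(b)$ has $O(d^4)$ dim-$b$ irreducible components, immediately resolving this case.

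The main case is $\dim F(X) \ge 1$, i.e.\ $X$ is ruled. By Lemma~\ref{lem:ruled} there is a unique geometrically irreducible component $E \subset X_\infty$ of degree $d_E \ge 3$, and the ruling $\cR$ is described by the finite map $\cR \to E$, $\ell \mapsto \ell \cap H_\infty$. The crucial quantitative observation is: if $\ell \in \cR$ admits two distinct $\cO_K$-points $x_1 \ne x_2$ of height $<b$, then $v = x_2 - x_1 \in \cO_K^3$ has $h(v) < b$, so the corresponding $e(\ell) = [v] \in E(K)$ has height $\le b-1$. I would then pull back to the normalization $\tilde E \to E$, producing a generically finite $K$-rational map $\psi : \tilde E \times \AA^1_K \dashrightarrow X$, $(\tilde e, s) \mapsto p(\tilde e) + s\, v(\tilde e)$, where the degree-$d_E$ embedding $E \hookrightarrow \PP^2_K$ enforces the scaling $h(v(\tilde e)) = d_E \cdot h_{\tilde E}(\tilde e) + O(1)$.

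The key dimension count proceeds by stratifying $Y(b)$ over the value $h_0 := h_{\tilde E}(\tilde e)$. The $\CC$-variety of $\tilde e \in \tilde E(K)$ of height $\le h_0$ has dimension $\le 2h_0 + 1$ by Corollary~\ref{cor: projective geometric SZ} (with Theorem~\ref{thm:Manin-Grauert} collapsing $\tilde E(K)$ to a finite set when $\tilde E$ has genus $\ge 1$ and is not defined over $\CC$). For each fixed $\tilde e$, valid $s$ form a $\CC$-variety of dimension $\le b - d_E h_0 + O(1)$. Summing, stratum $h_0$ contributes at most
\[
(2h_0 + 1) + (b - d_E h_0 + O(1)) = b + 1 + (2 - d_E)h_0 + O(1),
\]
maximized at $h_0 = 0$ when $d_E \ge 3$, yielding $\dim Y(b) \le b+1$. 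Top-dimensional ($b+1$) contributions concentrate at $h_0 = 0$; dim-$b$ components then arise only from the $h_0 = 1$ stratum (contributing $O(1)$ components by irreducibility of the stratum on $\tilde E$) and from lines outside $\cR$ (contributing $O(d^4)$ by the first case), giving $O(d^4)$ dim-$b$ components in total. The main obstacle is making the scaling $h(v(\tilde e)) = d_E h_{\tilde E}(\tilde e) + O(1)$ uniform and absorbing the $O(1)$ correctly within each stratum so that $b+1$ is attained exactly and not merely up to an additive constant, which requires some care across the different cases for the genus and field of definition of $\tilde E$.
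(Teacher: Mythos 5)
Your constructibility argument and the $\dim F(X) = 0$ case are both sound (the latter plays the role of the paper's count of $O(d^4)$ lines not meeting the ruling curve $E$, there obtained via~\cite[Lem.~4.6]{Vermeulen:affinedg}). The gap is in the ruled case, and it is exactly the one you flag at the end: the scaling $h(v(\tilde e)) = d_E\,h_{\tilde E}(\tilde e) + O(1)$ is a functoriality-of-heights statement whose additive constant depends on the coefficients of the parametrization $\tilde E \to E \subset \PP^2_K$ and cannot in general be taken to be zero. Feeding this into your stratification yields a stratum-$h_0$ contribution of $(2h_0 + 1) + (b - d_E h_0 + O(1)) = b + 1 + (2 - d_E)h_0 + O(1)$, which at $h_0 = 0$ is $b + 1 + O(1)$ rather than the required $b + 1$; the conclusion $\dim Y(b) \le b+1$ therefore does not follow. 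There is also a milder structural issue: $\psi:\tilde E \times \AA^1_K \dashrightarrow X$ is written as if the ruling were a single $K$-rational family of lines, but by Lemma~\ref{lem:ruled} and the NCC bound there may be up to $d^2$ lines of $X$ through a given point of $E$, so $\cR \to E$ is a multisection and one must count these finitely many branches per direction to make the stratification and the component count precise.

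The paper circumvents the $O(1)$ loss by never passing to the normalization. It stratifies by the height $k-1$ of the direction $v \in E(K)$ directly, and applies Theorem~\ref{thm: projective curves} (the determinant-method result of Section~4) to the plane curve $E \subset \PP^2_K$ of degree $e \ge 3$ to get the clean inequality $\dim E(k) \le \lfloor 2(k-1)/e\rfloor + 1$ with no additive slack. Combined with $\dim L(b) \le b - (k-1)$ for a line whose primitive direction has height $k-1$, and the factor of at most $d^2$ lines per direction, this gives $\dim Y_k \le b+1$ with equality possible only at $k=1$, and the component bound $O(ed^2 + d^4) = O(d^4)$. In effect, your argument attempts to re-derive the $E(k)$ bound through the normalization of $E$ using only the Schwartz--Zippel estimate, and the uncontrolled $O(1)$ you observe is precisely the error that Theorem~\ref{thm: projective curves} (i.e.\ the determinant method) is needed to eliminate; replacing your scaling step by a direct appeal to that theorem, as the paper does, repairs the proof.
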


\begin{proof} 
If $L$ is a line in $\AA^3_K$ then it is of the form
\[
L=\{a+\lambda v: \lambda\in K\},
\]
for some $a \in \AA^3_K$ and $v = (v_1, v_2, v_3)\in \cO_K^3\setminus \{(0,0,0)\}$, where we may assume that the $v_i$ are coprime. This line is contained in $X$ if and only if  for every $\lambda\in K$ we have that $a+\lambda v\in X$. By Corollary~\ref{cor: existence of C points} applied to $L \cap X \subseteq L$, it is enough to require this condition for all $\lambda\in \CC$. 
If $L$ contains at least two points on $X(b)$, then we may assume that $a\in X(b)$ and that $h(v)< b$. Consider the set
\[
\{(a,v)\in X(b)\times (\AA^3_K \setminus\{(0,0,0)\})(b): a+\lambda v\in X(b)\,\, \forall \lambda\in \CC\}.
\]
By Chevalley's theorem, this is a constructible set over $\CC$, and its projection onto the first coordinate is exactly $Y(b)$ and hence constructible.

Now let $L$ be such a line in $Y$, with base-point $a\in X(b)$ and $v\in \cO_K^3$ primitive. Then the set of integral points on $L$ is given exactly by
\[
L(\cO_K) = \{a+\lambda v: \lambda\in \cO_K\}
\]
and hence $\Naff(L,b) = b-h(v)$ and $L(b)$ is irreducible. 

If $v$ is a direction of a line $L$ on $Y(b)$, then by expanding $f(a+\lambda v)$ into powers of $\lambda$, we see that $f_d(v) = 0$. As discussed in Remark~\ref{rem:NCC}, $X$ is NCC and so~\cite[Lem.\,4.7]{Vermeulen:affinedg} shows that there can be at most $d^2$ lines contained in $X$ which are parallel to a given line $L$ on $X$. Combining these facts, the strategy is to bound the integral points on lines by counting possible directions for lines, which come from rational points on the curve $X_\infty\subset \PP^2_K$ defined by $f_d = 0$.

Let $E$ be a geometrically irreducible component of $X_\infty$ which rules $X$ as in Lemma~\ref{lem:ruled} if it exists.
Otherwise put $E = \varnothing$.
Lemma~\ref{lem:ruled} shows that $E$ is unique and of degree $e\geq 3$, if it exists.

We first count on lines not coming from $E$.
Let $g\in K[x,y,z]$ be the homogeneous factor of $f_d$ obtained by removing $E$, i.e.\ the vanishing locus of $g$ is exactly the Zariski closure of $X_\infty\setminus E$.
Then Lemma~\ref{lem:ruled} shows that projecting the set
\[
\{(a,v)\in \AA^3\times \PP^2_K :  g(v) = 0 \wedge \forall  \lambda\in K \left(f(a+\lambda v) =0\, \right)\} 
\]
onto its first coordinate yields a finite union of lines on $X$. 
Expanding $f(a+\lambda v)$ as a polynomial in $\lambda$ gives polynomials of degree at most $d$ in $a,v$.
Therefore~\cite[Lem.\,4.6]{Vermeulen:affinedg} implies that this set consists of at most $d^4$ lines.

All other lines on $X$ must come from the ruling and intersect $E$.
Given an integer $k$ with $1\leq k\leq b$, denote by $Y_k\subset Y(b)$ the subvariety of points on a line $L$ whose direction $v$ satisfies $v\in E$ and $h(v)=k-1$. 
Recall that for each such line $L$ we have $\dim L(b) \le b-k+1$. 
We will show that $\dim Y_k\leq b$ and count the number of irreducible components of $Y_k$.
By Theorem~\ref{thm: projective curves}, 
as $e \ge 3$, 
we get that  
\[
\dim E(k) \le 
\left\lfloor \frac{2(k-1)}{e}\right\rfloor + 1. 
\]
In particular, for each $k \ge 1$, the dimension of $Y_k$ is at most
\[
\dim Y_k \le \dim E(k) + (b-k+1)
\le \left\lfloor \frac{2(k-1)}{e}\right\rfloor +
 b-k
+ 2 \le b+1.
\]
For $k\geq 2$ this shows that the dimension of $Y_k$ is at most $b$. 
We thus only have to consider $Y_1$ in counting irreducible components.

To count the number of irreducible components, note that $E(1)$ has at most $e$ irreducible components of dimension $1$, by Corollary~\ref{cor: projective geometric SZ}.
For a line $L$ on $X$ there are at most $d^2$ lines on $X$ parallel to it, and hence $Y_1$ has at most $ed^2$ irreducible components of dimension $b$.
Putting everything together, we obtain that the number of irreducible components of $Y(b)$ of dimension $b+1$ is bounded by 
\[
O\left(d^4 + d^2 e \right) \leq O(d^4).\qedhere
\]
\end{proof}

For conics on a surface we have the following. 

\begin{lemma}\label{lem:conics.aff.surface}
Suppose $f \in \mathcal{O}_K[x, y, z]$ is absolutely irreducible of degree $d \ge 3$ and let $X = V(f)\subset \AA^3_K$. Assume that $f_d$ has an absolutely irreducible factor of degree at least $3$. 

Let $b$ be a positive integer, and denote by $Y\subset X$ the union of all conics on $X$ which are tangent to the plane at infinity in a $K$-rational point. Then $Y(b)$ is a constructible set over $\CC$ with
\[
\dim Y(b)\leq \frac{2b+1}{3}.
\]
If $f_d$ is absolutely irreducible and $d \ge 4$ then one may improve this to
\[
\dim Y(b)\leq \frac{b+1}{2}.
\]
Moreover, if $C$ is a conic lying on $X$ and not tangent to the plane at infinity, then $C(b)$ is at most one-dimensional for all $b$.
\end{lemma}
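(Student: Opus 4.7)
The plan is to mirror the proof of Lemma \ref{lem:lines.aff.surface}, working with tangent conics in place of lines. A conic tangent to $H_\infty$ at a $K$-rational point $[0:w]$ admits an affine parametrization $\phi(\lambda) = a + \lambda v + \lambda^2 w$ with $a, v, w \in \cO_K^3$ and $w$ taken primitive, where $[0:w]$ is the direction at infinity. The condition $\phi(\lambda) \in X$ for all $\lambda \in K$ expands into a finite list of polynomial equations on $(a,v,w)$, and by Corollary~\ref{cor: existence of C points} it suffices to impose this for $\lambda \in \CC$. Therefore $Y(b)$ is constructible as the projection to $X(b)$ of the constructible set of admissible triples of bounded height, using Chevalley's theorem.

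For the dimension bound I split cases by whether $X$ carries infinitely many tangent conics. If not, then $Y$ is a finite union of tangent conics, each handled by the single-conic analysis below, which gives $\dim C(b) \leq \lceil (b-h(w))/2 \rceil \leq (b+1)/2$; this is within all of the stated bounds. Otherwise $X$ is ruled tangentially by conics, and Lemma \ref{lem:ruled} produces a unique irreducible component $E \subset X_\infty$ of degree $e \geq 3$, with (up to finitely many exceptions) every conic in the ruling tangent at a point of $E$, and only finitely many conics through each such point.

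For the single-conic estimate, if $w \in \cO_K^3$ is primitive with $h(w) = k$ and $C$ is a tangent conic at $[0:w]$, then writing $\lambda = p/q$ with $\gcd(p,q)=1$ and comparing denominators in $q^2\phi(p/q) = aq^2 + pqv + p^2 w$ forces $q^2 \mid p^2 w$, hence $q \in \CC^\times$ by primitivity of $w$, so $\lambda \in \cO_K$. Then $h(\phi(\lambda)) \geq 2h(\lambda) + k$, giving $\dim C(b) \leq \lceil (b-k)/2 \rceil$. Stratifying the family of tangent conics by $k=h(w)$, the set of directions $w\in E$ with $h(w)\leq k$ has dimension at most $\lfloor 2k/e\rfloor + 1$ by Theorem \ref{thm: projective curves}, and each $w$ corresponds to only finitely many conics. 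Combining yields
\[
\dim Y(b) \leq \sup_{k} \left(\lfloor 2k/e\rfloor + 1 + \lceil (b-k)/2\rceil\right).
\]
Optimizing: for $e\geq 4$ (which holds when $f_d$ is absolutely irreducible and $d\geq 4$) the linear coefficient $2/e-1/2$ is non-positive, so the supremum is attained at small $k$, yielding $(b+1)/2$; for $e=3$ the coefficient is positive, so it is attained at the largest admissible $k$, yielding $(2b+1)/3$.

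For the final assertion, let $C$ be a conic on $X$ with two distinct points at infinity. Since $C$ is one-dimensional of degree $2$, Proposition \ref{prop: projection for affine varieties} produces a birational plane conic $C'\subset \AA^2_K$ with $\dim C(b) = \dim C'(b)$, and the projection from Lemma \ref{lem: projection for proj varieties} (from a $\CC$-point and to a coordinate subspace) preserves intersections with $H_\infty$; thus $C'$ also has two distinct points at infinity, and Proposition \ref{prop:curve.siegel} gives $\dim C'(b)\leq 1$.

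The most delicate step will be extracting the \emph{sharp} constants $(2b+1)/3$ and $(b+1)/2$ from the stratification. In particular, one must carefully account for the degenerate case in which $E$ is defined over $\CC$, where naively the $k=0$ stratum would contribute an extra unit of dimension; here either the relevant conics degenerate (and are absorbed into the lines estimate) or the incidence is not dominant, and the extra dimension is avoided. Apart from that combinatorial optimization, each ingredient is already available from Lemma \ref{lem:ruled}, Theorem \ref{thm: projective curves}, and Proposition \ref{prop:curve.siegel}.
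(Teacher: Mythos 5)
Your proposal follows the same overall architecture as the paper's proof (constructibility via Chevalley, case split on whether the conics in the ruling are finite or infinite, invoking Lemma~\ref{lem:ruled} to identify the unique component $E$ through which almost all tangent conics pass, stratifying by the height of the point of tangency, and handling non-tangent conics via projection and Proposition~\ref{prop:curve.siegel}). The final non-tangent case and the constructibility discussion are essentially correct. However, the single-conic estimate — the technical heart of the lemma — has a genuine gap.

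You assume a parametrization $\phi(\lambda) = a + \lambda v + \lambda^2 w$ with $a, v, w \in \cO_K^3$ \emph{and} $w$ primitive, where $[0:w]$ is the point of tangency. These two requirements are incompatible in general. Consider for instance the plane conic $x^2 = ty$ (this works just as well inside $\AA^3$). Its point at infinity is $[0:0:1]$, with primitive direction $w = (0,1)$. Demanding $\phi(\lambda) = a + \lambda v + \lambda^2(0,1)$ and substituting into $x^2 = ty$ forces $v_1^2 = t$, which has no solution in $K$; so no $K$-parametrization with that primitive leading coefficient exists, let alone one over $\cO_K$. Every $\cO_K$-parametrization of this conic necessarily has a non-primitive leading coefficient (e.g.\ $\phi(m)=(tm,tm^2)$ has $w=(0,t)$). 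If you relax primitivity, your "$q^2 \mid p^2 w \Rightarrow q\in\CC^\times$" step breaks, and moreover $h(w)$ no longer equals the projective height of the tangency point, so the appeal to Theorem~\ref{thm: projective curves} to bound $\dim\{w\in E : h(w)\le k\}$ becomes a mismatch of heights. Reconciling these is exactly the content of the denominator analysis in the paper's proof, which clears the denominator $D$, stratifies integral $\lambda$-values by $\gcd(\lambda,D)$, and produces \emph{finitely many} integral quadratic reparametrizations; your argument silently assumes this reduction is unnecessary.

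A secondary issue, which you yourself flag, is that the stated optimization does not in fact yield the constants $(2b+1)/3$ and $(b+1)/2$ from the formula $\sup_k\bigl(\lfloor 2k/e\rfloor + 1 + \lceil (b-k)/2\rceil\bigr)$. For $e=3$, $b=3$, the supremum of your expression over $0\le k\le b$ is $3$, which exceeds $\frac{2b+1}{3}=\frac{7}{3}$. The extra loss comes both from the ceiling $\lceil(b-k)/2\rceil$ replacing the sharper $\frac{b-\deg C_1}{2}$ of the paper (which is often strictly less than $\frac{b-h(y)}{2}$) and from the slightly different indexing. Deferring this to "the most delicate step" does not resolve it: the stated bound genuinely relies on the finer single-conic estimate, so the two gaps are linked rather than independent.
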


\begin{proof}
It follows that 
$Y(b)$ is a variety over $\CC$ in the same way as it did for ruling by lines. We will follow the strategy of Browning--Heath-Brown--Salberger \cite{Brow-Heath-Salb} in Section 6.1 to prove the other claims. 
	
Let $C$ be any non-degenerate conic contained in $X$. If $C(b)$ is of dimension $0$, then we are done. Otherwise, $C$ is determined in $\PP^3$ by a linear form $L(x_0, x_1, x_2, x_3)$ and a quadratic form $q(x_0, x_1, x_2, x_3)$, both with coefficients in $\cO_K$.

If $C$ is not tangent to $H_\infty$, then the result follows by Proposition~\ref{prop:curve.siegel} and Proposition~\ref{prop: projection for affine varieties}.
So we may assume that $C$ is tangent to $H_\infty$.
As $q$ has coefficients in $\cO_K$, the conic $C$ meets $H_\infty$ at a point $y\in H_\infty$. 
We will show that $N_{\text{aff}}(C, b) \le \frac{b-h(y)}{2}$.
Our strategy will be to show that the height $b$ points on $C$ can be parameterized by finitely many quadratic polynomials of a certain form.

We may assume without loss of generality that $L(x) = \sum_i a_i x_i$ with $a_3\neq 0$ for some $a_i\in \cO_K$. Hence we have on $C$ that
\[
x_3=a_3^{-1}(a_0 x_0+a_1x_1+a_2 x_2).
\]
By substituting this in the quadratic form $q$ defining $C$, we get a quadratic form $Q(x_0, x_1, x_2)$ which, together with the equation for $x_3$ above, determines $C$.
As $C$ is tangent to $H_\infty$, we have that  $Q(0, x_1, x_2)=a(\alpha x_1+\beta x_2)^2$, where we can assume that $\alpha$ and $\beta$ have no common factors.
Altogether we get,
\[
Q(x_0, x_1, x_2)=a (\alpha x_1+\beta x_2)^2+ b x_0 x_1+c x_0 x_2+d x_0^2.
\]
We will reparameterize such that $\alpha x_1+\beta x_2$ becomes a single variable; we will do this in a $\cO_K$-invertible way.

As $\cO_K$ is a PID, we can find elements $\gamma$ and $\delta$ such that $\alpha \delta-\beta \gamma=1$.
Denote $y_0=x_0$, $y_1=\alpha x_1+\beta x_2$, $y_2=\gamma x_1+\delta x_2$.
In terms of $y_0$, $y_1$, $y_2$ the quadratic form then becomes
\[
Q(y_0, y_1, y_2)=a y_1^2+e y_0 y_1+g y_0 y_2+d y_0^2.
\]
Note that $g \neq 0$ as $C$ is non-degenerate.
Now, using $Q(y_0, y_1, y_2)=0$, we see that we need
\[
y_2=\frac{-ay_1^2-dy_0^2-e y_0 y_1}{g y_0}
\]
if $y_0 \neq 0$.
Because we care about the affine coordinates on $C$, we will be assuming $y_0=1$.
Putting $y_0=x_0=1$, we then find that we can calculate $y_2$ in terms of $y_1$; hence we can find $x_1$, $x_2$ and $x_3$ in terms of $y_1$.
In particular, we have quadratic polynomials $q_1, q_2, q_3$ over $K$ such that $x_1=q_1(y_1)$, $x_2=q_2(y_1)$, $x_3=q_3(y_1)$ is a parameterization of $C$.

Now, if $C$ contains no $\cO_K$-points of height at most $b$, we are done. Otherwise, let $\delta$ be an $\cO_K$-point on $C$ of height at most $b$, and take $y^*\in K$ such that $\delta = (q_1(y^*), q_2(y^*), q_3(y^*))$. Note that $y^* = \alpha \delta_1 + \beta \delta_2$, showing that in fact $y^* \in \cO_K$.
We can then change coordinates to $u=y-y^*$. 
Abusing notation by putting $x_i = q_i(u)$, we obtain $\delta_i = q_i(0)$, and hence we may assume that the $q_i$ have constant term of height at most $b$ in $\cO_K$.

Next, let $D$ be a common denominator for all the $q_i$.
Suppose
\[
q_i(u)=\delta_i+\frac 1D (B_i u+ C_i u^2).
\]
We want to find those $u \in \cO_K$ for which $q_i(u)$ is in $\cO_K$.
We will subdivide them according to their greatest common divisor with $D$, so fix a factor $\lambda$ of $D$ and fix some $u \in \cO_K$ for which $q_i(u) \in \cO_K$ and $\gcd(u, D)=\lambda$.

Write $u=\lambda z$, $D=\lambda D'$.
We get that
\[
q_i(u) - \delta_i = \frac{1}{D'}(B_i z+\lambda C_i z^2) \in \cO_K
\]
for all $i$.
By assumption, $z$ has no common factor with $D'$, so we have $D' \mid B_i+\lambda C_i z$.
Let $G_i$ be the greatest common divisor of $\lambda C_i$ and $D'$.
Then we obtain $G_i \mid B_i$. Denote $\lambda C_i=G_i C_i'$, $B_i=G_i B_i'$ and $D'=G_i D_i''$.
Then we find that $z \equiv -C_i'^{-1} B'_i \pmod {D_i''}$, which determines $z$ uniquely modulo the least common multiple $d'$ of the $D_i''$. Denote by $Z\in \cO_K$ the element of smallest degree such that $Z \equiv -C_i'^{-1} B'_i \pmod {D_i''}$.

We can then substitute $z=Z+d' v$, with $v\in \cO_K$, to get representations $q_i(v)$ (in terms of $v$) which are contained in $\cO_K$.

Because we can repeat this for every factor $\lambda$ of $D$ (which is each time the greatest common divisor with $u$), we find finitely many quadratic representations $(q_{1i}(v), q_{2i}(v), q_{3i}(v))$ whose $\cO_K$-images together cover every $\cO_K$-point of $C$.

We can almost conclude the proof of the claim now.
Focus on one representation
\[
q_i(v)=A_i+B_i v+C_i v^2.
\]
If it contains a height at most $b$ point in its $\cO_K$-image, we may assume that the $A_i$ have height at most $b$, otherwise we are done.

Suppose without loss of generality that $\deg C_1 \ge \deg C_2, \deg C_3$.
Write $B_1=B_1'+D_1 C_1$, with $\deg B_1'<\deg C_1$.
Then it is clear that in order to have $\deg q_1(V)<b$, we need $\deg v(v+D_1)<b-\deg C_1$.
If $\deg D_1< \frac{b-\deg C_1}{2}$, we see that the only requirement is $\deg v< \frac{b-\deg(C_1)}{2}$, which means that $v$ can take values in an irreducible space of dimension $\frac{b-\deg C_1}{2}$ over $\CC$.

If $\deg D_1 \ge \frac{b-\deg C_1}{2}$, we have to distinguish between two cases:
\begin{itemize}
	\item If $\deg v \neq \deg D_1$, then $\deg v <b-\deg C_1-\deg D_1 \le \frac{b-\deg C_1}{2}$.
	\item If $\deg v=\deg D_1$, then $\deg(v+D_1)<\frac{b-\deg C_1}{2}$.
\end{itemize}

In either case $v$ can only take values in an irreducible space of dimension at most $\frac{b-\deg C_1}{2}$ over $\CC$.
We are now done once we realize that $[0:C_1:C_2:C_3]=y$, and hence $h(y) \le \deg C_1$.

Suppose now that there are only finitely many non-degenerate conics lying on $X$ which are tangent to $H_\infty$.
By the above, in this case $\dim C(b) \le \frac{b}{2}$, so in particular $\dim Y(b) \le \frac{b}{2}$.

Finally, suppose that there are infinitely many non-degenerate conics lying on $X$ which are tangent to $H_\infty$.
By Lemma \ref{lem:ruled}, all but finitely many of these conics pass through $E$, which corresponds to the unique absolutely irreducible factor of $f_d$ of degree at least $3$.
We can ignore these finitely many conics and assume all conics we consider pass through $E$.

For $y \in E$, let $Y_y$ be the subvariety of $Y(b)$ consisting of those conics intersecting $H_\infty$ in $y$.
We know that $Y_y$ is the union of finitely many $C(b)$ by Lemma \ref{lem:ruled}.
Hence
$$\dim Y_y \le \frac{b-h(y)}{2}.$$
Using Theorem~\ref{thm: projective curves}, we conclude that
$$\dim Y(b) \le \max_{1 \le k \le b} \frac{b-k}{2}+\left \lfloor\frac{2(k-1)}{3}\right \rfloor+1 \le \frac{2b+1}{3}.$$
Note that the latter is strictly smaller than $b$ unless $b=1$.
If $f_d$ is absolutely irreducible and $d \ge 4$, then $E$ has degree $d$, so that
\[
\dim Y(b)\leq \max_{1\leq k\leq b} \frac{b-k}{2} + \left\lfloor \frac{2(k-1)}{d}\right\rfloor+1 \leq \frac{b+1}{2}.\qedhere
\]
\end{proof}


\begin{lemma}\label{lem:tangent.mult}
Let $X\subset \AA_K^3$ be a geometrically integral surface. Let $U\subset X$ be the subset 
 of smooth points $x \in X$ for which the intersection multiplicity of $X$ with the tangent space $T_xX$ at $x$ is at most $2$. Then $X\setminus U$ consists of $O(d^4)$ irreducible components of dimension one. 
\end{lemma}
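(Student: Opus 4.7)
Write $X = V(f)$ for an absolutely irreducible $f \in K[x_1, x_2, x_3]$ of degree $d$, and decompose
\[
X \setminus U = X^{\mathrm{sing}} \cup F,
\]
where $F$ is the \emph{flex locus} of smooth points $x$ at which the intersection multiplicity of $X$ with $T_xX$ at $x$ is at least $3$. The plan is to bound the one-dimensional irreducible components of each of $X^{\mathrm{sing}}$ and $F$ by $O(d^2)$, well within the claimed $O(d^4)$. For the singular locus: as $X$ is an integral surface, $X^{\mathrm{sing}}$ has dimension at most one; after a generic linear change of coordinates, we may assume $V(f_{x_1}) \cap V(f_{x_2})$ is a curve, which by Bezout has degree at most $(d-1)^2$. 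Since $X^{\mathrm{sing}}$ is contained in this curve, it has $O(d^2)$ one-dimensional components.

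To characterize $F$ algebraically, I would expand $f$ locally around a smooth point $x$:
\[
f(x + v) = \nabla f(x) \cdot v + \tfrac{1}{2} v^T H_f(x) v + (\text{higher-order terms in } v).
\]
For $v \in T_xX = \ker \nabla f(x)$ the linear term vanishes, so the local equation of $X \cap T_xX$ on $T_xX$ near $x$ begins with the quadratic $\tfrac{1}{2} v^T H_f(x) v|_{T_xX}$; hence $x \in F$ iff this quadratic form is identically zero on $T_xX$. On the open chart $V_3 := \{f_{x_3} \neq 0\}$ of $X^{\mathrm{sm}}$, the vectors $v_1 = (f_{x_3}, 0, -f_{x_1})$ and $v_2 = (0, f_{x_3}, -f_{x_2})$ span $T_xX$, so $F \cap V_3$ is cut out inside $X \cap V_3$ by the three polynomials $q_{ij}(x) := v_i^T H_f(x) v_j$ for $1 \le i \le j \le 2$, each of degree at most $3d - 4$.

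The main obstacle will be verifying that $F \subsetneq X$, since Bezout then delivers the bound. Suppose $F = X^{\mathrm{sm}}$; then $H_f(x)(T_xX) \subseteq \langle \nabla f(x) \rangle$ at every smooth $x$, which is precisely the condition that the Gauss map $G\colon X^{\mathrm{sm}} \to \PP^2_K$, $x \mapsto [\nabla f(x)]$, has vanishing differential at $x$. Since $X$ is geometrically integral, $X^{\mathrm{sm}}$ is irreducible and connected, so (working in characteristic zero) $G$ is constant, say $[\nabla f(x)] = [n_0]$ for a fixed $n_0 \in K^3 \setminus \{0\}$. Then for any $v \in K^3$ with $n_0 \cdot v = 0$, the polynomial $\nabla f \cdot v$ of degree $d - 1$ vanishes on $X = V(f)$, forcing $\nabla f \cdot v \equiv 0$ on $\AA^3$; consequently $f = g(n_0 \cdot x)$ for some $g \in K[t]$, and absolute irreducibility of $f$ forces $g$ linear, contradicting $d \ge 2$. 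Therefore some $q_{ij}$ does not vanish identically on $X$, and Bezout yields $\deg(X \cap V(q_{ij})) \le d(3d - 4) = O(d^2)$, bounding the number of one-dimensional components of $F \cap V_3$ by $O(d^2)$. The analogous argument on the charts $\{f_{x_1} \neq 0\}$ and $\{f_{x_2} \neq 0\}$ covers $X^{\mathrm{sm}}$ (after ensuring no $f_{x_i}$ vanishes identically on $X$, possibly via a further coordinate change), and together with the singular-locus bound this completes the proof.
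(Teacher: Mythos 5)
Your proof is correct, and it follows a genuinely different, more self-contained route than the paper's. The paper's proof is a one-line appeal to the universal polynomials $\Phi_i(a_e;x_0,\ldots,x_3)$ of \cite[Lem.~10, 11]{Brow-Heath-Salb}, which have degree $O(d^4)$ in $x$, and this is where the exponent $4$ in the statement comes from. You instead express the flex condition directly as the simultaneous vanishing on $X$ of the three quadratic forms $q_{ij}(x)=v_i^{T}H_f(x)v_j$ of degree $3d-4$ (on the chart $f_{x_3}\neq 0$), and rule out the degenerate case $F=X^{\mathrm{sm}}$ by observing that it forces the Gauss map to have vanishing differential, hence (in characteristic zero, on the connected $X^{\mathrm{sm}}$) to be constant, contradicting absolute irreducibility of $f$ for $d\geq 2$. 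Together with the singular-locus bound this gives the sharper estimate $O(d^2)$, comfortably within the $O(d^4)$ claimed. Two small clean-ups worth making: for the singular locus it is simpler, and avoids justifying that $f_{x_1}$ and $f_{x_2}$ become coprime after a generic linear change, to note $X^{\mathrm{sing}}\subseteq X\cap V(f_{x_i})$ for any $i$ with $f_{x_i}\not\equiv 0$ on $X$ (such an $i$ exists, since otherwise $f$ would be a polynomial in a single variable), a curve of degree at most $d(d-1)$ in $X$; and at most one $f_{x_i}$ can vanish identically on $X$, so the remaining two charts $\{f_{x_j}\neq 0\}$ always cover $X^{\mathrm{sm}}$ and your argument carries over verbatim.
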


\begin{proof}
This follows from~\cite[Lem.\,10, 11]{Brow-Heath-Salb}. Indeed, note that the universal polynomials $\Phi_i(a_e; x_0, x_1, x_2, x_3)$ constructed there are of degree at most $O(d^4)$ in $x$.
\end{proof}

We need one more elementary lemma for the proof of Proposition~\ref{prop:aff.surfaces}.

\begin{lemma}\label{lem:bound.ell.ell'}
Let $d\geq 6, e\geq 3$ and $b\geq 1$ be integers.
Define $\ell = \lfloor (b-1)/\sqrt{d}\rfloor + 1$ and $\ell' = \lfloor (b-1)/e - \ell / (e-1)\rfloor+1$.
Then
\[
2\ell + \ell' \leq b+1.
\]
Moreover, if $b\geq 7$ then $2\ell + \ell' \le b$.
\end{lemma}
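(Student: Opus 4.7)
The plan is to convert the floor definitions of $\ell$ and $\ell'$ into linear inequalities, combine them, and conclude via the integrality of $2\ell+\ell'$. First, the definitions give $(\ell-1)\sqrt{d}\leq b-1$ and $e(\ell'-1)+\tfrac{e\ell}{e-1}\leq b-1$. Taking the weighted sum with weights $\tfrac{2e-3}{(e-1)\sqrt{d}}$ on the first and $\tfrac{1}{e}$ on the second (chosen so that the coefficients of $\ell$ and $\ell'$ on the LHS become exactly $2$ and $1$) yields
\[
2\ell+\ell' \;\leq\; A(b-1)+B, \qquad A := \frac{2e-3}{(e-1)\sqrt{d}}+\frac{1}{e},\quad B:=\frac{3e-4}{e-1}.
\]
The key numerical input is that $A<1$ for all $d\geq 6,\,e\geq 3$; this rearranges to $(e-1)^2\sqrt{d}>e(2e-3)$, which by monotonicity in both $d$ and $e$ reduces to the corner inequality $4\sqrt{6}>9$, i.e.\ $96>81$.

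For the first claim, since $2\ell+\ell'\in\ZZ$ it suffices to show $A(b-1)+B<b+2$, which rearranges to $(1-A)(b-1) > B-3 = -\tfrac{1}{e-1}$. This holds automatically, as the LHS is nonnegative ($A\le 1$, $b\ge 1$) while the RHS is negative. For the sharper claim the analogous reduction is $A(b-1)+B<b+1$, i.e.
\[
(1-A)(b-1) \;>\; \frac{2e-3}{e-1}.
\]
This no longer holds automatically: for each $(d,e)$ it holds only once $b$ exceeds an explicit threshold $b_0(d,e)$. In the worst corner $(d,e)=(6,3)$ one has $1-A=\tfrac{2}{3}-\tfrac{\sqrt{6}}{4}\approx 0.054$, yielding $b_0=29$; for $d\geq 7$ or $e\geq 4$ the threshold drops sharply (e.g.\ $b_0\leq 7$ once $d\geq 9$).

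Thus the sharper claim follows from the continuous bound whenever $b$ is sufficiently large, and the finitely many residual triples $(b,d,e)$ are handled by direct tabulation of $(\ell,\ell')$ from the definitions. A short monotonicity argument — both $\ell$ and $\ell'$ change by small controlled amounts as $d,e$ increase, so that $2\ell+\ell'-b$ is non-increasing in $d$ — reduces the residual verification to the worst case $(d,e)=(6,3)$ and the $22$ values $b\in\{7,\dots,28\}$, each a short arithmetic check (in every such case the maximum of $2\ell+\ell'-b$ turns out to be $0$, attained e.g.\ at $b=7,9,10$). The main obstacle is precisely this finite case verification: while each case is elementary, organizing the monotonicity reduction cleanly so that only the $d=6$ column truly needs tabulating is what drives the proof.
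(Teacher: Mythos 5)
Your overall strategy mirrors the paper's: bound $2\ell+\ell'$ linearly in $b$ by replacing each floor with the corresponding inequality, observe that the slope $A$ is strictly below $1$, and then dispatch the finitely many residual small-$b$ cases by direct computation. Your weighted combination (with weights chosen to make the coefficients of $\ell$ and $\ell'$ come out to exactly $2$ and $1$) is a clean and slightly sharper way to derive the same analytic estimate, and your treatment of the first claim purely from the analytic bound is nicer than what the paper does.

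There are two issues to flag, though. First, an arithmetic slip: for the sharper claim you need $A(b-1)+B<b+1$, which rearranges to $(1-A)(b-1) > B-2 = \frac{3e-4}{e-1}-2 = \frac{e-2}{e-1}$, not $\frac{2e-3}{e-1}$ as you wrote (that is $B-1$). Your threshold is therefore too conservative: with the correct right-hand side the worst corner $(d,e)=(6,3)$ gives $b_0 = 11$, not $29$, so the residual check is only $b\in\{7,\dots,10\}$. The error makes the casework unnecessarily large but does not invalidate the argument.

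Second, the reduction of the residual verification to the single column $(d,e)=(6,3)$ is stated but not justified. You argue monotonicity of $2\ell+\ell'-b$ in $d$ (correct: decreasing $\ell$ by $k$ raises $\ell'$ by at most $\lceil k/(e-1)\rceil \le k$, so $2\ell+\ell'$ drops), but say nothing about $e$. Monotonicity in $e$ does in fact hold here: for $d\ge 6$ and $b\ge 7$ one has $\ell\le (b-1)/2$, and then
\[
\left(\frac{b-1}{e}-\frac{\ell}{e-1}\right) - \left(\frac{b-1}{3}-\frac{\ell}{2}\right)
 = -(e-3)\left(\frac{b-1}{3e}-\frac{\ell}{2(e-1)}\right) \le 0
\]
for all $e\ge 3$, since $\frac{2(e-1)}{3e}\ge \tfrac12 \ge \frac{\ell}{b-1}$. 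This step is needed to turn your "short monotonicity argument'' into an actual proof; without it, you would need (as the paper does) to observe that for each fixed $b,d$ only finitely many $e$ remain to check and verify all of them.
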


\begin{proof}
Approximating $\ell$ and $\ell'$ by similar expressions without $\lfloor\cdot\rfloor$ we see that
\begin{align*}
2\ell + \ell' \leq \frac{2(b-1)}{\sqrt{d}} + \frac{b-1}{e} + 3 - \frac{b-1}{\sqrt{d}(e-1)} - \varepsilon,
\end{align*}
for some small $0 < \varepsilon < 1/(\sqrt{d}(e-1))$.
If $b\geq 20$, then we see that this quantity is indeed bounded as stated.
So we may assume that $b < 20$.
Also, if $b=1$ or $b=2$ then a straightforward computation gives the desired conclusion, so we can assume that $3\leq b\leq 19$.
Now, if $b-1 < \sqrt{d}$ then $\ell = 1$, and so we also obtain the desired conclusion.
So we may assume that $\sqrt{d} \leq b-1 \leq 18$.
For fixed $b,d$ with $d\geq 6$ we note that $\ell'$ goes to $1$ as $e$ goes to $\infty$.
Hence for each $b,d$ there are only finitely many values of $e$ to check.
Going through all finitely many remaining cases using a computer program yields the proof.
\end{proof}

We have all of the ingredients for Proposition~\ref{prop:aff.surfaces}.

\begin{proof}[Proof of Proposition~\ref{prop:aff.surfaces}]
By Lemma~\ref{lem:irre.vs.abs.irre} we may assume that $X$ is geometrically integral.
Let $Y_1\subset X(b)$ consist of all integral points of height at most $b$ on the union of all lines on $X$ which contain at least two points of $X(b)$. Similarly, let $Y_2\subset X(b)$ consist of all integral points of height at most $b$ on $X$ on conics on $X$ which are tangent to the plane at infinity. By Lemmas~\ref{lem:lines.aff.surface} and  \ref{lem:conics.aff.surface} we have 
\[
\dim Y_1\leq b+1, \dim Y_2\leq \frac{2b+1}{3},
\]
and that $Y_1$ has at most $O(d^4)$ irreducible components of dimension $b+1$.
Let $Z\subset X$ consist of all points $x$ on $X$ which are either singular on $X$, or for which the tangent plane section has intersection multiplicity strictly greater than $2$ at $x$. Then Lemma~\ref{lem:tangent.mult} and the Schwartz--Zippel bound from Proposition~\ref{prop: affine geometric SZ} show that
\[
\Naff(Z,b)\leq b,
\]
and the number of irreducible components of $Z(b)$ of dimension $b$ is bounded by $O(d^4)$.

Let $(Z_j)_j$ be the irreducible components of $X(b)$ of maximal dimension. 
If $Z_j$ is contained in $Z(K)$ then $Z_j$ is in fact contained in $Z(b)$. Hence by the reasoning above we have $\dim Z_j\leq b$. 
Otherwise, we take a point $Q_j$ on $Z_j$ which is not in $Z$, i.e.\ it is smooth on $X$ and has intersection multiplicity at most $2$ with the tangent plane section. Put $\ell = \lfloor (b-1)/\sqrt{d}\rfloor+1$, which will be used to apply the determinant method later. Then there exist non-zero primes $p_1, \ldots, p_\ell\in \cO_K$ such that $X_{p_i}$ is geometrically integral, such that the reduction of $Q_j$ is still smooth on $X_{p_i}$, and such that the reduction still has intersection multiplicity at most $2$ with the tangent plane section at $Q_j\bmod p_i$. 
Indeed, all of these are generic conditions.

Let $Y\subset X(b)$ be the subset of points $P\in X(b)$ which reduce to a point which is smooth and has intersection multiplicity at most $2$ with the tangent plane section in every $X_{p_i}$. Consider the morphism
\[
\varphi: Y\to \prod_{i=1}^\ell X_{p_i}
\]
arising from reductions modulo $p_i$, $1 \le i \le \ell$. Fix a point $P = (P_1, \ldots, P_\ell)$ in the image of this map. By Proposition~\ref{prop:aff.aux.poly}, the preimage $\varphi^{-1}(P)\cap Y$ is contained in a set of the form $C(b)$ for a curve $C\subset X$ of degree at most $O(d^{7/2}b)$. In fact, $\varphi^{-1}(P)\cap Y$ is clearly even contained in $C(b; P_1, \ldots, P_\ell)$, consisting of those points $P\in C(b)$ such that $P\equiv P_i\bmod p_i$. Our aim is to bound the dimension of this object.

Let $D$ be an irreducible component of $C$, say of degree $e$. We may assume that $D$ is geometrically integral by Lemma~\ref{lem:irre.vs.abs.irre}. We will bound the dimension of $D(b; P_1, \ldots, P_\ell)$. If $D(b; P_1, \ldots, P_\ell)$ contains at most one point, then there is nothing to prove. So assume that $D(b; P_1, \ldots, P_\ell)$ consists of at least two points. If $e=1$ then $D(b; P_1, \ldots, P_\ell)$ is contained in $Y_1$. If $e=2$ then $D(b; P_1, \ldots, P_\ell)$ is either contained in $Y_2$, or the final statement of Lemma~\ref{lem:conics.aff.surface} applies. We may disregard either case and assume that $e\geq 3$.

Note that $1\leq \mult_{P_i}(D_{p_i}) \leq  e$. If this multiplicity were equal to $e$, then $D_{p_i}$ would be a cone, and hence $D_{p_i}$ would be contained in the intersection of $X_{p_i}$ with the tangent plane $T_{P_i}X_{p_i}$. But this contradicts the fact that $P_i$ has intersection multiplicity at most $2< e$ on the tangent plane section. Hence $\mult_{P_i}(D_{p_i}) < e$ for every $i$. Define the integer $\ell'$ as
\begin{equation}
\label{eq: ellprime}
\ell' = \ell'(e) = \left\lfloor \frac{b-1}{e} - \frac{\ell}{e-1}\right\rfloor +1.
\end{equation}
We now claim that 
\[
\dim D(b; P_1, \ldots, P_\ell)\leq \ell'.
\]
To prove the claim, we reason as in the proof of Theorem~\ref{thm: projective curves}. Let $(D_j)_j$ be the irreducible components of $D(b; P_1, \ldots, P_\ell)$ of maximal dimension. If $D_j$ is contained in $D^{\mathrm{sing}}(K)$, then $D_j$ is finite, and this certainly satisfies the desired dimension bound. Otherwise, fix a point $Q_j'$ on $D_j$ which is smooth on $D$. Take non-zero primes $q_1, \ldots, q_{\ell'}\in \cO_K$ for which $Q_j'$ remains smooth on the reduction $D_{q_j}$, and for which $D_{q_j}$ is geometrically integral. Let $D'\subset D(b; P_1, \ldots, P_\ell)$ consist of all points on $D(b; P_1, \ldots, P_\ell)$ which reduce to a smooth point on every $D_{q_j}$. Then Proposition~\ref{prop:aff.aux.poly} shows that the map
\[
D'\to \prod_{j=1}^{\ell'} D_{q_j}
\]
is finite-to-one onto its image, of degree at most $O(e^2b + e^3\ell')$. 
Indeed, let $Q_1'', \ldots, Q_{\ell'}''$ be smooth points on $D_{q_1}, \ldots, D_{q_\ell'}$. 
Since
\[
\ell' + \sum_{i=1}^\ell \frac{1}{\mult_{P_i}(D_{p_i})} \geq \frac{b-1}{e},
\]
Proposition~\ref{prop:aff.aux.poly} provides us with an auxiliary curve of degree at most $O(e^2b + e^3\ell')$ containing $D(b; P_1, \ldots, P_\ell, Q_1'', \ldots, Q_{\ell'}'')$ but not containing $D$.

As in the proof of Theorem~\ref{thm: projective curves}, we then obtain that $D'$ is dense in every $D_j$ of maximal dimension, and so we conclude that $D(b; P_1, \ldots, P_\ell)$ has dimension at most $\ell'=\ell'(e)$ as in Equation (\ref{eq: ellprime}). 

To conclude the proof, we have shown that every irreducible component of a fibre of the map $\varphi$ which is not contained in $Y_1$ or $Y_2$ has dimension bounded by 
\[
\max_{e\geq 3}\ell'(e).
\]
Now $\prod_{i=1}^{\ell} X_{p_i}$ has dimension $2\ell$, and so the union of all irreducible components of fibres of $\varphi$ which are not contained in $Y_1$ or $Y_2$ has dimension bounded by
\[
2\ell +  \max_{e\geq 3}\ell'(e).
\]
By Lemma~\ref{lem:bound.ell.ell'} this quantity is always bounded by $b+1$.
So we conclude that $X(b)$ has dimension bounded by $b+1$.

For the number of irreducible components, Lemma~\ref{lem:bound.ell.ell'} shows that for $b\geq 7$ only irreducible components from $Y_1$ or $Y_2$ can contribute.
For these, Lemma~\ref{lem:lines.aff.surface} show that this number is at most $O(d^4)$.
When $b < 7$, Proposition~\ref{prop:aff.aux.poly} shows that the map $\varphi: Y\to \prod_{i=1}^\ell X_{p_i}$ has degree at most $O(d^7)$, and so $X(b)$ has $O(d^7)$ irreducible components of dimension $b+1$.

For the moreover part of the statement, let $U\subset X$ be the open subset obtained by removing all lines from $X$.
Then in the above argument we do not have to consider $Y_1$.
For $Y_2$, we have by Lemma~\ref{lem:conics.aff.surface} that
\[
\dim Y_2(b) \leq \frac{b+1}{2}.
\]
The set $Z$ is a one-dimensional algebraic set.
Let $Z' \subset Z$ be obtained by removing all lines from $Z$, so that $U\cap Z \subset Z'$.
Then by Theorem~\ref{thm: affine curves} we have that
\[
\dim Z'(b)\leq \left\lceil \frac{b}{2}\right\rceil.
\]
Finally, we have argued above that all components of $X(b)$ which do not come from $Y_1, Y_2$ or $Z$ have dimension at most
\[
2\ell + \max_{e\geq 3}\ell'(e)\leq \left(\frac{3}{2\sqrt{d}} + \frac{1}{3}\right)(b-1)+\frac{5}{2}.
\]
Therefore we conclude that $U(b)$ has dimension at most
\[
\max\left\{\frac{b+1}{2}, \left(\frac{3}{2\sqrt{d}} + \frac{1}{3}\right)(b-1)+\frac{5}{2}\right\}. \qedhere
\]
\end{proof}

\subsection{Higher-dimensional varieties}

In this section we develop the necessary slicing arguments to deduce our main results. 

\begin{proposition}
\label{prop: Bertini}
Let $n\geq 3$ and let $X\subset \PP^n_K$ be a geometrically integral hypersurface.
Let $Z\subset (\PP^n_K)^*$ consist of all hyperplanes $H$ for which $X\cap H$ is not geometrically integral.
Then $\dim Z(1) < n$.
\end{proposition}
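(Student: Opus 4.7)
The plan is to combine the classical Bertini theorem over $K$ with the geometric Schwartz--Zippel bound of Corollary \ref{cor: projective geometric SZ}. Since $n \geq 3$ and $X \subset \PP^n_K$ is a geometrically integral hypersurface, we have $\dim X = n-1 \geq 2$. Bertini's irreducibility theorem, valid in characteristic zero (e.g.\ via Jouanolou), then asserts that there exists a dense open subset $U \subset (\PP^n_K)^*$ such that $X \cap H$ is geometrically integral for every $H \in U$. Consequently the set $Z$ is contained in the proper closed complement of $U$, and in particular is contained in a closed subvariety $\widetilde{Z} \subset (\PP^n_K)^*$ with
\[
\dim \widetilde{Z} \leq n - 1.
\]

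To pass from a dimension bound on $\widetilde{Z}$ to the desired bound on $\widetilde{Z}(1)$, I would apply Corollary \ref{cor: projective geometric SZ} to $\widetilde{Z}$ (viewed as a projective variety in $(\PP^n_K)^* \cong \PP^n_K$) with $b = 1$, obtaining
\[
\dim \widetilde{Z}(1) \leq (\dim \widetilde{Z} + 1) \cdot 1 - 1 = \dim \widetilde{Z} \leq n-1.
\]
Since $Z(1) \subseteq \widetilde{Z}(1)$, this gives $\dim Z(1) \leq n-1 < n$, as claimed.

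The only delicate point is choosing the correct form of Bertini. One needs the ``irreducibility'' version over a non-algebraically-closed field of characteristic zero, guaranteeing that the locus of hyperplanes $H$ for which $X \cap H$ fails to be geometrically integral is contained in a proper Zariski closed subset of the dual projective space $(\PP^n_K)^*$; both the hypothesis $\dim X \geq 2$ and characteristic zero are used here. Everything else is a direct application of the geometric Schwartz--Zippel estimate already developed in the paper, so the proof amounts to invoking Bertini once and combining it with Corollary \ref{cor: projective geometric SZ}.
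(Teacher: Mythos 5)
Your proof is correct and takes essentially the same route as the paper: invoke Bertini's theorem to place the non-integral locus $Z$ inside a proper Zariski-closed subvariety of $(\PP^n_K)^*$, and then apply the geometric Schwartz--Zippel bound (the paper cites Corollary~\ref{cor: existence of C points}, whose proof is precisely the $b=1$ Schwartz--Zippel estimate you use via Corollary~\ref{cor: projective geometric SZ}) to bound $\dim Z(1)$. The only cosmetic remark is that Corollary~\ref{cor: projective geometric SZ} is stated for pure-dimensional varieties, so one should formally apply it componentwise (or enlarge $\widetilde Z$ to a hypersurface), but this does not affect the argument.
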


In particular, note that this implies that there are infinitely many hyperplanes $H\subset \PP^n$ defined over $\CC$ for which $X\cap H$ is geometrically integral.

\begin{proof}
By Bertini's Thereom, the variety of linear forms $\ell_\alpha$ over $K$ for which $f|_{\{\ell_\alpha=0\}}$ is reducible is a Zariski closed set strictly contained in the space of all linear forms on $\AA^n_K$. 
We are now done by Corollary \ref{cor: existence of C points}.
\end{proof}

\begin{lemma}\label{lem:Bertini.aff}
Let $n\geq 4$, let $X\subset \AA^n_K$ be a geometrically integral hypersurface and assume that $X_\infty$ has a geometrically irreducible component of degree at least $3$.
Let $Z\subset (\PP^n_K)^*$ consist of all hyperplanes $H$ for which $X\cap H$ is not geometrically integral, or for which $(X\cap H)_\infty$ has no geometrically irreducible component of degree at least $3$.
Then $\dim Z(1) < n$.
\end{lemma}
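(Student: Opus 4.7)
The plan is to exhibit $Z$ as a subset of a proper Zariski closed subset $\tilde{Z}$ of $(\PP^n_K)^*$; since $(\PP^n_K)^*$ has dimension $n$, this will force $\dim \tilde{Z} < n$ and hence $\dim Z(1)\leq \dim \tilde{Z}(1) < n$ by the projective Schwartz--Zippel bound (Corollary~\ref{cor: projective geometric SZ}). I split the two bad conditions defining $Z$ and handle each via a Bertini-type argument.

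First, to deal with the hyperplanes $H$ for which $X\cap H$ is not geometrically integral, I pass to the projective closure $\overline{X}\subset \PP^n_K$, which is geometrically integral of dimension $n-1$. Applying Proposition~\ref{prop: Bertini} to $\overline{X}$ yields a proper Zariski closed subset $Z_1\subset (\PP^n_K)^*$ outside of which $\overline{X}\cap H$ is geometrically integral of dimension $n-2$. For $H\notin Z_1\cup\{H_\infty\}$ with $\overline{X}\not\subset H$, a dimension count shows $\overline{X}\cap H\not\subset H_\infty$ (its intersection with $H_\infty$ has dimension $n-3$), so $X\cap H=(\overline{X}\cap H)\setminus H_\infty$ is a nonempty dense open subset, hence geometrically integral. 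Moreover in this case $\overline{X\cap H}=\overline{X}\cap H$, so $(X\cap H)_\infty = X_\infty\cap (H\cap H_\infty)$.

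Next, for the condition that $(X\cap H)_\infty$ has no geometrically irreducible component of degree $\geq 3$, let $Y\subset X_\infty$ be a geometrically irreducible component of degree $e\geq 3$, which exists by hypothesis. Since $n\geq 4$ and $X_\infty$ is a hypersurface in $H_\infty\cong \PP^{n-1}_K$, the variety $Y$ is geometrically integral of dimension $n-2\geq 2$. By Bertini's theorem for $Y\subset H_\infty$, the locus $W\subset (H_\infty)^*$ of hyperplanes $H'\subset H_\infty$ for which $Y\cap H'$ fails to be geometrically integral of dimension $n-3$ and degree $e$ is a proper Zariski closed subset. The restriction map $\varphi\colon (\PP^n_K)^*\setminus \{H_\infty\}\to (H_\infty)^*$, $H\mapsto H\cap H_\infty$, is a regular surjection with one-dimensional fibres, so $\tilde{W}:=\varphi^{-1}(W)\cup\{H_\infty\}$ is a Zariski closed subset of $(\PP^n_K)^*$ of dimension at most $\dim W + 1 < n$, and hence proper.

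Putting everything together, for $H$ outside the proper Zariski closed set $\tilde{Z}:=Z_1\cup \tilde{W}\cup \{H:\overline{X}\subset H\}$ we have that $X\cap H$ is geometrically integral, and $(X\cap H)_\infty = X_\infty\cap H'$ contains the geometrically integral variety $Y\cap H'$ as a component of dimension $n-3$ and degree $e\geq 3$. Thus $Z\subset \tilde Z$, and the claim follows. The main obstacle is the bookkeeping needed to relate hyperplanes $H\in (\PP^n_K)^*$ to their restrictions $H'=H\cap H_\infty$ via $\varphi$ and to justify the identification $(X\cap H)_\infty = X_\infty\cap H'$ generically; the Bertini inputs themselves are standard because we are in characteristic zero and the relevant varieties have dimension at least $2$.
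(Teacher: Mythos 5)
Your proof is correct and takes essentially the same approach as the paper: the paper's one-line proof is ``apply Proposition~\ref{prop: Bertini} to $X$ and to the geometrically irreducible component of $X_\infty$ of degree at least $3$,'' and your argument is exactly this, carried out in full detail (including the necessary lift from $(H_\infty)^*$ to $(\PP^n_K)^*$ via the pencil map $\varphi$ and the identification $(X\cap H)_\infty = X_\infty\cap H'$ for generic $H$, both of which the paper leaves implicit).
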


Again, this implies the existence of infinitely many hyperplanes $H\subset \AA^n$ defined over $\CC$ for which $X\cap H$ is geometrically integral and has a geometrically irreducible component of degree at least $3$ at infinity.

\begin{proof}
This follows by applying the previous proposition to $X$ and to the geometrically irreducible component of $X_\infty$ of degree at least $3$.
\end{proof}

\subsection{Proofs of main results}

In this section we finish up and prove our main results. 

\begin{lemma}\label{lem:induct.thm.A}
Let $n\geq 3$, let $f\in K[x_1, \ldots, x_n]$ be absolutely irreducible of degree $d$, and denote $X = V(f)\subset \AA^n_K$.
Assume that $f_d$ is not a power of a linear form.
Let $Z\subset (\PP^n_K)^*$ consist of all hyperplanes for which $(X\cap H)_\infty$ is the vanishing locus of a power of a linear form.
Then $\dim Z(1)< n$.
\end{lemma}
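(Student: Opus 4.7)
The plan is to interpret the condition defining $Z$ as a condition on hyperplane sections of $V(f_d) \subset \PP^{n-1}_K$, and then show that this condition fails for a generic hyperplane direction via a case analysis on the irreducible factorization of $f_d$. Write a hyperplane in $\PP^n_K$ as $H_a = V(a_0 x_0 + \sum_{i=1}^n a_i x_i)$ with $(a_0 : \ldots : a_n) \in (\PP^n_K)^*$ and $x_0$ the homogenizing coordinate on $\AA^n_K$. If $(a_1, \ldots, a_n) = 0$ then $H_a = H_\infty$ and $(X \cap H_a)_\infty = X_\infty = V(f_d)$, which by hypothesis is not the vanishing locus of a power of a linear form, so $(1:0:\ldots:0) \notin Z$. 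For all other $H_a$, the variety $(X \cap H_a)_\infty$ is cut out in $\overline{H}_a \cap H_\infty \cong \PP^{n-2}_K$ by the restriction $f_d|_{L_a = 0}$, where $L_a = \sum_{i=1}^n a_i x_i$; hence membership of $H_a$ in $Z$ depends only on $[a_1 : \ldots : a_n]$. Let $W \subset \PP^{n-1}_K$ be the closed subvariety of those $[a_1 : \ldots : a_n]$ for which $f_d|_{L_a = 0}$ is a $d$-th power of a linear form. Then $Z$ is contained in the preimage of $W$ under the linear projection $(\PP^n_K)^* \setminus \{(1:0:\ldots:0)\} \to \PP^{n-1}_K$ forgetting $a_0$, so $\dim Z \leq \dim W + 1$.

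The heart of the proof is to show that $W$ is a proper subvariety of $\PP^{n-1}_K$, i.e., that some hyperplane direction yields a non-power restriction. Factor $f_d = \prod_i g_i^{e_i}$ into $K$-irreducibles. If some $g_i$ has $\deg g_i \geq 2$, then applying Bertini to the irreducible hypersurface $V(g_i) \subset \PP^{n-1}_K$ produces a $K$-hyperplane $L_a$ for which $g_i|_{L_a}$ is not a power of a linear form: for $n \geq 4$ the generic hyperplane section $V(g_i) \cap V(L_a)$ is geometrically irreducible of dimension $\geq 1$, so $g_i|_{L_a}$ is irreducible of degree $\geq 2$; for $n = 3$ the generic line meets the irreducible plane curve $V(g_i)$ in $\deg g_i \geq 2$ distinct points by B\'ezout and transversality, so $g_i|_{L_a}$ factors into distinct linear forms. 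In either subcase $f_d|_{L_a}$ acquires at least two distinct irreducible factors and is therefore not a power of a linear form. If instead every $g_i$ is linear, the hypothesis on $f_d$ gives two non-proportional factors $\ell_1, \ell_2$; for generic $L_a$ the restrictions $\ell_1|_{L_a}, \ell_2|_{L_a}$ remain non-proportional, since proportionality forces $L_a$ to lie in the one-dimensional pencil spanned by $\ell_1$ and $\ell_2$ inside $(\PP^{n-1}_K)^*$, which has dimension $n-1 \geq 2$. In this case $f_d|_{L_a}$ carries at least two distinct linear factors.

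Combining, $\dim W \leq n-2$ and therefore $\dim Z \leq n-1$. The geometric Schwartz--Zippel bound of Corollary~\ref{cor: projective geometric SZ} then yields $\dim Z(1) \leq \dim Z \leq n-1 < n$, as required. The main technical obstacle I anticipate is carrying Bertini through over the non-algebraically-closed field $K = \CC(t)$ while tracking each irreducible factor $g_i$ of $f_d$ separately; this is standard in characteristic zero over an infinite base field, but the case $n = 3$ has a slightly different flavour, since then the relevant hyperplane section is zero-dimensional and one appeals to transversality (reducedness) rather than irreducibility as the Bertini-type input.
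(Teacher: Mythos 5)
Your overall strategy is the same as the paper's: descend the condition on $H$ to a condition on the hyperplane $H \cap H_\infty$ in $\PP^{n-1}_K$, show the bad locus there is a proper closed subvariety, and finish with the geometric Schwartz--Zippel bound of Corollary~\ref{cor: projective geometric SZ}. The difference is in how you organize the case analysis: you split by the $K$-irreducible factorization of $f_d$ (some factor of degree $\ge 2$, versus all factors linear), while the paper splits by the geometrically irreducible components of $X_\infty$ (at least two components, versus a single non-linear one). The paper's decomposition is the cleaner choice, and it sidesteps a genuine gap in your argument.

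The gap is in your first case. You apply Bertini to $V(g_i)$ where $g_i$ is a $K$-irreducible factor of degree at least $2$, asserting that for $n\ge 4$ the generic hyperplane section $V(g_i)\cap V(L_a)$ is geometrically irreducible. But a $K$-irreducible polynomial need not be geometrically irreducible: for instance $g_i$ could split over $\overline{K}$ into two conjugate linear factors. The paper's Proposition~\ref{prop: Bertini} is stated for \emph{geometrically} integral hypersurfaces, and does not apply to such a $V(g_i)$; you acknowledge this as a potential obstacle in your final paragraph but do not resolve it. The fix is standard --- if $g_i$ is geometrically reducible, its geometrically irreducible components are distinct Galois conjugates, and a generic $K$-hyperplane meets them in distinct pieces, so $g_i\vert_{L_a}$ again has at least two distinct factors over $\overline{K}$ --- but it is exactly this bookkeeping that the paper avoids by choosing to decompose $X_\infty$ into geometrically irreducible components from the outset. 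One further small imprecision: in the $n\ge 4$ subcase your claim that ``$f_d\vert_{L_a}$ acquires at least two distinct irreducible factors'' is not what you actually prove when $f_d = g_i^{e_i}$; what you get is a single irreducible factor of degree $\ge 2$, which of course still rules out being a power of a linear form, but the sentence should be rephrased. Your second case (all factors linear, use the pencil) is correct and is essentially the paper's $k>1$ argument specialized to linear components.
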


\begin{proof}
Let $D_1, \ldots, D_k\subset H_\infty=\PP^{n-1}_K$ be the geometrically irreducible components of $X_\infty$.
Then either $k > 1$ or $k=1$ and $D = D_1$ is not linear.

Assume first that $k>1$, so that $D_1\neq D_2$.
The set $Y\subset (\PP^{n-1}_K)^*$ consisting of all hyperplanes $H'\subset \PP^{n-1}_K$ for which $H'\cap D_1 = H'\cap D_2$ is a proper closed subvariety of $(\PP^{n-1}_K)^*$.
Hence so is the set $Z'\subset (\PP^n_K)^*$ of hyperplanes $H\subset \PP^n_K$ for which $H\cap H_\infty\in Y$.
But $Z$ is contained in $Z'$, so we conclude by Corollary~\ref{cor: projective geometric SZ}.

Now assume that $k = 1$ and write $D = D_1$.
Then $D$ is not linear.
If $n\geq 4$ then Bertini's theorem implies that the set $Y\subset (\PP^{n-1}_K)^*$ consisting of all hyperplanes $H'\subset \PP^{n-1}_K$ for which $H'\cap D$ is not geometrically integral is a proper closed subvariety of $(\PP^{n-1}_K)^*$.
But if $H\in Z$ then $H\cap H_\infty \in Y$ and so the result follows from Corollary~\ref{cor: projective geometric SZ}.
If $n = 3$ then $D$ is a curve in $\PP^2_K$ of degree at least $2$.
Hence, if $H'\subset \PP^2_K$ is a generic line, then $H'\cap D$ consists of $\deg D$ many isolated points.
These isolated points cannot be the vanishing locus of a linear polynomial, from which the result follows.
\end{proof}

For Theorem~\ref{thm:main.siegel} we require the following.

\begin{theorem}\label{thm:aff.dgc.Siegel}
Let $X\subset \AA^n_K$ be an irreducible hypersurface defined by $X=V(f)$ for $f\in K[x_1, \ldots, x_n]$. Assume that $f$ is of degree $d\geq 2$, and that $f_d$ is not a power of a linear form. Then for every $b \ge 1$,
\[
N_{\aff}(X,b)\leq (n-2)b+1.
\]
\end{theorem}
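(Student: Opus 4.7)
The plan is to proceed by induction on $n$, with base case $n=2$ and inductive step via slicing by affine hyperplanes parallel to a chosen coordinate direction. For the base case, since $f_d \in K[x_1, x_2]$ is a nonzero homogeneous polynomial of degree $d \ge 2$ which is not a power of a linear form, it has at least two distinct linear factors over $\overline{K}$, so $X$ has at least two distinct geometric points at infinity; Proposition~\ref{prop:curve.siegel} then yields $\dim X(b) \le 1 = (n-2)b+1$.

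For the inductive step ($n \ge 3$), we may assume $X$ is geometrically integral by Lemma~\ref{lem:irre.vs.abs.irre} (else $\dim X(b) \le (n-2)b$ already holds). By combining Lemma~\ref{lem:induct.thm.A} with Corollary~\ref{cor: existence of C points}, a $\CC$-linear change of coordinates on $\AA^n_K$ --- which preserves $X(b)$ up to isomorphism, since $\CC$-linear maps preserve $\cO_K$-points and heights --- can be chosen so that in the new coordinates both (i) $x_n \nmid f_d$ and (ii) $f_d(x_1, \ldots, x_{n-1}, 0)$ is not a power of a linear form. Setting $g_a(x_1, \ldots, x_{n-1}) := f(x_1, \ldots, x_{n-1}, a)$ and $X_a := V(g_a) \subset \AA^{n-1}_K$, condition (i) guarantees $\deg g_a = d$ with leading form $f_d(x_1, \ldots, x_{n-1}, 0)$ independent of $a$, and (ii) ensures this leading form is not a power of a linear form --- so every $X_a$ satisfies the leading-form hypothesis of the induction. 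A standard Bertini-type argument further shows that the set $B \subset \AA^1_K$ of $a$ for which $g_a$ is not absolutely irreducible is a proper closed, hence finite, subset.

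To conclude, let $C \subset X(b)$ be any irreducible component and let $\pi: X(b) \to \AA^1_K(b) \cong \AA^b_\CC$ be the projection onto the $b$ coefficients of $x_n$. If $\pi(C)$ meets the open complement of $\pi^{-1}(B)$, the inductive hypothesis applied to a generic fiber of $\pi|_C$ gives $\dim C \le \dim \pi(C) + (n-3)b + 1 \le b + (n-3)b + 1 = (n-2)b+1$. Otherwise $\pi(C) \subset \pi^{-1}(B)$, which is a finite set of $\CC$-points (each $a \in B$ lifting to at most one $\alpha \in \AA^b_\CC$), so $C \subset X_{a_\alpha}(b)$ for some bad $a_\alpha$, and Proposition~\ref{prop: affine geometric SZ} applied to the hypersurface $X_{a_\alpha} \subset \AA^{n-1}_K$ gives $\dim C \le (n-2)b$. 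Either way $\dim C \le (n-2)b + 1$, completing the induction. The main technical obstacle is arranging conditions (i) and (ii) simultaneously via the $\CC$-linear coordinate change: both are generic open conditions on the transformation --- (i) excludes $\CC$-changes making $x_n$ a factor of $f_d$, while (ii) is exactly the content of Lemma~\ref{lem:induct.thm.A} --- so their joint satisfiability is immediate from Corollary~\ref{cor: existence of C points}.
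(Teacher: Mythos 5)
Your proof is correct and follows essentially the same route as the paper: reduce to the geometrically integral case via Lemma~\ref{lem:irre.vs.abs.irre}, induct on $n$ with base case $n=2$ from Proposition~\ref{prop:curve.siegel}, and in the inductive step slice by a pencil of parallel $\CC$-translates of a hyperplane chosen so that generic slices again satisfy the leading-form hypothesis (via Lemma~\ref{lem:induct.thm.A} and Bertini), with Schwartz--Zippel handling the finitely many bad slices. Your phrasing via an explicit $\CC$-linear coordinate change and slicing by $x_n = a$ is the same as the paper's choice of a $\CC$-hyperplane $H$ and slicing by the parallel family $H_c$; the only cosmetic remark is that your condition (ii) already implies condition (i), so only one genericity condition beyond Bertini needs to be imposed.
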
 

\begin{proof}
If $X$ is irreducible but not geometrically irreducible then the result follows from Lemma~\ref{lem:irre.vs.abs.irre}. So we may assume that $X$ is geometrically irreducible. 

We induct on $n$, where the case $n=2$ follows from Proposition~\ref{prop:curve.siegel}. So assume that $n\geq 3$. By Lemma~\ref{lem:induct.thm.A}, there exists a hyperplane $H\subset \AA^n_K$ defined over $\CC$ such that $X\cap H$ is geometrically integral, and such that $(X\cap H)_\infty$ is not the vanishing locus of a power of a linear form. Without loss of generality, we may assume $H$ is defined by 
\[
x_n = a_1 x_1 + \ldots a_{n-1}x_{n-1}.
\]
For $c\in K$, denote by $H_c$ the hyperplane defined by $x_n = a_1x_1 + \ldots a_{n-1}x_{n-1}+c$. There are only finitely many $c\in \AA^1_K$ for which $X\cap H_c$ is either not geometrically irreducible, or $(X\cap H_c)_\infty$ is a power of a linear form. For these values of $c$, we use the Schwartz--Zippel bounds to obtain $N_{\aff}(X\cap H_c, b)\leq (n-2)b$. For the other values of $c$, by induction we have that $N_{\aff}(X \cap H_c, b)\leq (n-3)b+1$. Since $X(b)$ is the union of the $(X\cap H_c)(b)$ for $c$ running over $\AA^1_K(b)$, we have that $X(b)$ is of dimension at most $(n-2)b + 1$.
\end{proof}

\begin{proof}[Proof of Theorem~\ref{thm:main.siegel}]
Let $X\subset \PP^n_K$ be an irreducible projective hypersurface of degree $d\geq 2$. Let $Y\subset \AA^{n+1}_K$ be the affine cone over $X$. Then the conditions from Theorem~\ref{thm:aff.dgc.Siegel} are satisfied, and so we immediately obtain that
\[
N(X,b)\leq N_{\aff}(Y,b)-1\leq (n-1)b. \qedhere
\]
\end{proof}

\begin{proof}[Proof of Theorem~\ref{thm:aff.dim.growth}]
Let $X\subset \AA^n_K$ be an irreducible affine variety of degree $d\geq 6$ and of dimension $m\geq 2$. Assume that $X_\infty$ has a geometrically irreducible component of degree at least $3$.
By Lemma~\ref{lem:irre.vs.abs.irre} and the Schwartz--Zippel bound, we may assume that $X$ is geometrically integral. Using Lemma~\ref{prop: projection for affine varieties} we may assume that $X$ is a hypersurface in $\AA^n_K$. 

We now induct on $n$, where the base case $n=3$ is Proposition~\ref{prop:aff.surfaces}. So assume that $n\geq 4$. By Lemma~\ref{lem:Bertini.aff} there exists a hypersurface $H$ defined over $\CC$ such that $X\cap H$ is geometrically integral and such that $(X\cap H)_\infty$ has a geometrically irreducible component of degree at least $3$. 
Without loss of generality, $H$ is given by $x_n = a_1x_1 + \ldots + a_{n-1}x_{n-1}+a_n$ for some $a_i\in \CC$. 
For $c\in \cO_K$ let $H_c$ be the hyperplane given by $x_n = c+ \sum_{i=1}^{n-1}a_i x_i$. 
Then the set of $c\in K$ for which $X\cap H_c$ is not geometrically integral, or for which $(X\cap H_c)_\infty$ has no geometrically irreducible component of degree at least $3$, is finite. 
Consider the map $g: \AA^n_K\to \AA^1_K$ defined by
\[
g(x)=x_n - \sum_{i=1}^{n-1} a_i x_i,
\] 
 and note that 
if $x\in H_c$ then $g(x)=c$, and that since $H$ is defined over $\CC$ we have $h(g(x)) \leq h(x)$. 
Now, the map  $f: X(b)\to \AA^1_K(b)$ induced by $g$ 
is algebraic, and by induction all but finitely many fibres have dimension bounded by $(m-2)b+1$. The remaining finitely many bad fibres have dimension bounded by $(m-1)b$ by the Schwartz--Zippel bound. So we conclude that the dimension of $X(b)$ is bounded by  $(m-1)b+1$.
Moreover, by induction the number of irreducible components of $X(b)$ of dimension $(m-1)b+1$ is at most $O(d^4)$ if $b\ge 7$ and $O(d^7)$ otherwise. 
\end{proof}

\begin{proof}[Proof of Theorem~\ref{thm:main.dgc}]
We may assume that $X$ is geometrically integral. Let $Y\subset \AA^{n+1}_K$ be the affine cone over $X$. 
Then $Y$ is geometrically integral since $X$ is. 
Moreover, $Y_\infty$ is equal to $X$, and in particular has a geometrically irreducible component of degree at least $3$. 
Applying Theorem~\ref{thm:aff.dim.growth} then gives that 
\[
N(X,b)= \Naff(Y,b)-1\leq mb. 
\]
The bound on the number of irreducible components of  dimension $mb$ similarly follows. 
\end{proof}

\begin{proof}[Proof of Theorem~\ref{thm:proj.surfaces}]
We may assume that $X$ is geometrically irreducible, and recall that $U$ is obtained from $X$ by removing all lines.
Let $Y\subset \AA^4_K$ be the affine cone over $U$, so that
\[
N(U,b)\leq N_{\mathrm{aff}}(Y,b)-1.
\]
Then $Y$ is a geometrically integral quasi-affine $3$-fold, and $Y_\infty$ is also geometrically integral.
By Lemma~\ref{lem:Bertini.aff}, there exists a hyperplane $H\subset \AA^4_\CC$ defined over $\CC$ for which $Y\cap H$ and $(Y\cap H)_\infty$ are both geometrically integral.
Let $H$ be given by $a_1x_1 + \ldots +a_nx_n = a_0$ for some $a_i, a_0\in \CC$.
For $c\in \CC(t)$ let $H_c\subset \AA^4_K$ be the hyperplane given by $a_1x_1+\ldots + a_nx_n = c$.
Now $(Y\cap H_c)_\infty = (Y\cap H)_\infty$ is geometrically integral, and so for every $c\in \CC(t)$ we have that $Y\cap H_c$ is geometrically integral.

Take any $c\in \CC(t)\setminus \{0\}$, so that $0$ does not lie in $H_c$.
If $\ell$ is a line on $H_c\cap Y$, then because $Y$ is a cone with cone point $0$ the plane containing $\ell$ through $0$ would be contained in $Y$.
But this would yield a line on $U$, which by construction cannot exist.
Hence $H_c\cap Y$ contains no lines and we conclude from Proposition~\ref{prop:aff.surfaces} that
\[
N_{\mathrm{aff}}(H_c\cap Y , b)\leq \max\left\{\frac{b+1}{2}, \left(\frac{3}{2\sqrt{d}} + \frac{1}{3}\right)(b-1)+\frac{5}{2}\right\}.
\]
For $H_0\cap Y$ we have by Proposition~\ref{prop:aff.surfaces} that $N_{\mathrm{aff}}(H_0\cap Y, b)\leq b+2$.
By fibering over $c\in \AA^1_K(b)$, we conclude as desired that
\[
N_{\mathrm{aff}}(Y, b)\leq \left(1+\max\left\{ \frac{1}{2}, \frac{3}{2\sqrt{d}} + \frac{1}{3}\right\}\right) b + 2. \qedhere
\]
\end{proof}

\bibliographystyle{amsalpha}
\bibliography{anbib}
\end{document}